\setlist[enumerate,1]  {label={\rm (\roman*)}, leftmargin=1.5em}   
\setlist{noitemsep} 
\numberwithin{equation}{section}
\newtheorem{theorem}{Theorem}[section]
\newtheorem{proposition}[theorem]{Proposition}
\newtheorem{lemma}[theorem]{Lemma}
\newtheorem{corollary}[theorem]{Corollary}
\newtheorem{definition}[theorem]{Definition}
\newtheorem{remark}[theorem]{Remark}
\renewenvironment{proof}[1][Proof] {\noindent \textbf{#1.} }
{\  \rule{0.5em}{0.5em}\par \medskip}
\newcommand{\hole}{\mathscr{C}}
\newcommand{\wto}{\rightharpoonup}
\newcommand{\myleq}[1]{\ensuremath{\stackrel{\text{#1}}{\leq}}}
\newcommand{\mygeq}[1]{\ensuremath{\stackrel{\text{#1}}{\geq}}}
\newcommand{\mywto}[1]{\ensuremath{\stackrel{\text{#1}}{\wto}}}
\newcommand{\defeq}{\vcentcolon=}
\newcommand\restr[2]{\ensuremath{{#1}_{|_{#2}}}}
\renewcommand{\abs}[1]{\left| #1 \right|}
\newcommand{\D}{\displaystyle}
\newcommand*\Lap{\mathop{}\!\mathbin\Delta}
\newcommand*\adh[1]{\overline{#1}} 
\newcommand{\RN}{{\mathbb{R}^N}}
\newcommand{\R}{\mathbb{R}}
\newcommand{\bnorm}[1]{\mathrel{\left\|#1\right\|}}
\renewcommand{\norm}[1]{\mathrel{\|#1\|}}
\def\XXint#1#2#3{{\setbox0=\hbox{$#1{#2#3}{\int}$ }
		\vcenter{\hbox{$#2#3$ }}\kern-.580\wd0}}
\begin{document}
	
	\title{Asymptotic behaviour of the heat equation in an exterior domain
		with general boundary conditions II. The case of
                bounded and of $L^{p}$ data.}

	\author{ Joaquín Domínguez-de-Tena${}^{*,1}$ \\
		An\'{\i}bal Rodr\'{\i}guez-Bernal\thanks{Partially supported
			by Projects PID2019-103860GB-I00 and  PID2022-137074NB-I00,  MICINN and   GR58/08
			Grupo 920894, UCM, Spain}\ ${}^{,2}$}

	\date{\today}
	\maketitle

	\setcounter{footnote}{2}
	\begin{center}
		Departamento de Análisis Matemático y  Matem\'atica Aplicada\\ Universidad
		Complutense de Madrid\\ 28040 Madrid, Spain \\ and \\
		Instituto de Ciencias Matem\'aticas \\
		CSIC-UAM-UC3M-UCM\footnote{Partially supported by ICMAT Severo Ochoa
			Grant CEX2019-000904-S funded by MCIN/AEI/ 10.13039/501100011033} , Spain 
	\end{center}
	
	\makeatletter
	\begin{center}
		${}^{1}${E-mail:
			joadomin@ucm.es}
		\\ 
		${}^{2}${E-mail:
			arober@ucm.es}
	\end{center}
	\makeatother

	\noindent {$\phantom{ai}$ {\bf Key words and phrases:}  Heat equation,
		exterior domain, asymptotic behaviour, decay rates, bounded initial data, Dirichlet, Neumann,
		Robin boundary conditions, bounded initial data.} 
	\newline{$\phantom{ai}$ {\bf Mathematical Subject Classification
			2020:} \
		35K05, 35B40, 35B30, 35E15}

	\begin{abstract}
            In this work, we study the asymptotic behaviour of
            solutions to the heat equation in exterior domains, i.e.,
            domains which are the complement of a smooth compact set
            in $\RN$. Different homogeneous boundary conditions are
            considered, including Dirichlet, Robin, and Neumann
            ones. In this second part of our work, we   consider the
            case of bounded initial data and prove that, after some
            correction term, the solutions become close to the
            solutions in the whole space and show how
            complex behaviours appear. We also analyse the case of
            initial data in $L^p$ with $1<p<\infty$ where all
            solutions essentially decay to $0$ and the convergence
            rate could be arbitrarily slow. 
	\end{abstract}

	\normalsize

\section{Introduction}

In this paper, which is a continuation of \cite{DdTRB24a} and \cite{DdTRB23},   we consider the heat equation
  \begin{equation} \label{eq:intro_heat}
	\left\{
	\begin{aligned}
		u_t-\Lap u = 0 \quad & in \ \Omega\times(0,\infty) \\
		B(u)=0 \quad & on \ \partial\Omega\times (0,\infty) \\
		u=u_0 \quad & in \ \Omega\times\{0\} , 
	\end{aligned}	
	\right. 
\end{equation}
in   a connected unbounded exterior domain $\Omega$, that is, the
complement of a compact set $\hole$ that we denote the \emph{hole},
which is the closure of a bounded smooth set; hence,
$\Omega=\RN\backslash \hole$.  We will assume,  without loss of
generality, that  $0\in \mathring{\hole}$,
the interior of the hole, and observe that $\hole$ may have different
connected components, although $\Omega$ is connected. The boundary
conditions, to be made precise in Section \ref{sec:preliminares}, include
Dirichlet, Neumann and Robin ones,  of the form $B(u)= \frac{\partial u}{\partial 
  n}+b u=0$ with $b >0$.

Our goal is to analyse the effect on the solutions due to the
presence of the hole and the boundary conditions.

For the case of
integrable data in the whole domain (that is, no hole present)  it is shown in
\cite{duoandikoetxea, giga, vazquez2017asymptotic} that the mass of
the solution, that is the integral in $\R^{N}$ of the solution, which
remains invariant in time  is distributing in space as a multiple of
the Gaussian $G(x,t)
=\frac{e^{-\frac{\abs{x}^2}{4t}}}{(4\pi t)^{\frac{N}{2}}}$.  This is a
phenomenon of the \emph{mass moving to infinity} with time. 
On the other end, for bounded data in $\R^{N}$ it has been shown in 
\cite{vazquez2002complexity,cazenave2003universal,
  wang2012complicated} that bounded solutions of the heat equation
show  complex dynamical  behaviour. See also  \cite{robinson2018optimal} for the
case of unbounded initial data and the phenomenon of \emph{mass moving from
infinity}. See also
\cite{vazquez2002complexity, cazenave2003universalb,
  wang2018complicated, robinson2023estimates}
for other related equations.

In the case of domain with holes, the holes  and boundary conditions 
introduce some dissipative mechanism in the equation so solutions
behave different from the case in the whole space. The case of
integrable data has been analysed in our previous works
\cite{DdTRB23,DdTRB24a} and also in
\cite{herraiz1998,quiroscanizo24}. It is shown
in these references that, contrary what it happens in the whole space, 
some computable fraction of the  mass of the initial data is lost through
the hole and then the  remaining mass distributes in space, asymptotically
in time, as the  Gaussian function $G(x,t)$ times a
correction profile function $\Phi(x)$ 
that takes into account the boundary conditions and satisfies $\Phi(x) \to 1$ as $|x|\to
\infty$.  In particular,  far from the hole solutions behave as those of the
problem in $\R^{N}$. 
Related results for porous medium equations can be found in 
\cite{quiros1999asymptotic, quiros2007, quirospm2d} and for some
nonlocal problems in \cite{cortazar2012asymptotic,CORTAZAR2016586}.

In this paper we analyse the case of bounded initial data and $L^{p}(\Omega)$
initial data with $1<p< \infty$. In the former  case of bounded data and for
$N\geq 3$,  although
no mass is associated to initial data we show that still the Gaussian
and the profile $\Phi$ describe the behaviour of solutions and show
complex asymptotic behaviour of solutions. In the
latter case we show that all solutions decay to zero as the equation
is somehow more dissipative than the case of integrable or bounded
initial data.

The organization of the paper follows. In Section \ref{sec:preliminares} we introduce some
previous results for the solutions of \eqref{eq:intro_heat} for $L^p(\Omega)$ and bounded
initial data. As the main results on bounded solutions rely on
comparison with suitable super and subsolutions, in Section
\ref{sec:bounded-very-weak_solutions} we develop those technical
results for very weak  bounded solutions.  
Then in  Section \ref{sec:initial_data_Linfty} for  initial data 
$u_{0}\in L^{\infty}(\Omega)$, we first prove that, for any
boundary conditions, far from the hole, the solutions of
\eqref{eq:intro_heat} remain close to the solutions of the heat 
equation in $\R^{N}$ with the same initial data (extended by zero out
of $\Omega$), $u_\RN(x,t)$, see Theorem
\ref{thr:dataLinfty_far_from_hole}. Then, we prove in Theorem
\ref{thm:dataLinfty_Dirichlet} that if 
$N\geq 3$ and  $u_0\in L^\infty(\Omega)$, then
\begin{equation}
\label{eq:intro_convergence_Linfty}
  \lim_{t\to\infty}\norm{u(t) -\Phi (\cdot)u_\RN(t)}_{L^\infty(\Omega)}=0 
\end{equation}
and more precisely 
\begin{equation}
  \label{eq:intro_rate_of_convergence_Linfty}
  \norm{u(t)-\Phi(\cdot)u_\RN(t)}_{L^\infty(\Omega)} \leq \left\{
\begin{aligned}
& \frac{C\log(t)}{\sqrt{t}}\norm{u_0}_{L^\infty(\Omega)} &&
\text{if}  \ N=3 \\
& \frac{C}{\sqrt{t}}\norm{u_0}_{L^\infty(\Omega)}  && \text{if}  \ N>3
\end{aligned}
\right.
\end{equation}
for large $t>0$.
This allows us to prove that
   \eqref{eq:intro_heat} inherits some of the complex behavior of the
solutions of the heat equation in $\R^{N}$ with bounded data in
\cite{vazquez2002complexity}, see Theorems \ref{thm:compleq1} and
\ref{thm:dowhatyouwant}.  

In the two dimensional case, $N=2$, and except for Neuman boundary
conditions, the profile $\Phi$, that is constructed as in
\eqref{eq:asymptotic_profile_parabolic}, vanishes in $\Omega$. This
implies that for any bounded initial data, the solution $u(x,t)$
converges to zero as $t\to \infty$ uniformly in compact
sets. Therefore \eqref{eq:intro_convergence_Linfty} and
\eqref{eq:intro_rate_of_convergence_Linfty} are no longer true.
Hence the two
dimensional case requieres  a separate analysis that will be carried
out elsewhere.

Finally, in Section \ref{sec:initial_data_Lp} we consider initial data 
$u_{0}\in L^{p}(\Omega)$ for $1<p<\infty$. Unlike the cases $p=1$ and
$p=\infty$, where the correspondent norm of the
solutions does not decay and solutions approach some asymptotic
profile, when $1<p<\infty$ we show that all solutions
converge 
to zero in  $L^{p}(\Omega)$, as the equation is somehow more dissipative in these
spaces. More precisely, we prove in Proposition \ref{prop:lpasym} that, for any $1<p<\infty$ and  $u_0\in L^p(\Omega)$ 
\begin{equation}
\label{eq:decay_Lp}
  \lim_{t\to\infty}\norm{u(t)}_{L^p(\Omega)}=0 
\end{equation}	
and for any $q$ such that $p< q \leq \infty$,
\begin{equation}
	\lim_{t\to\infty}t^{\frac{N}{2}(\frac{1}{p}-\frac{1}{q})} \norm{u(t)}_{L^q(\Omega)}=0.
\end{equation}
Also, there is no  decay rate in \eqref{eq:decay_Lp} as  for any
decay function, there exist solutions that decay slower, see Lemma 
\ref{lemma:souplet}. 
In addition, we included two appendixes were we collected
some auxiliar technical  results needed for some of the proofs.

Throughout  this paper, we adopt
the convention of using $c$ and  $C$ to represent various constants 
which may change from line to line, and whose concrete value is not
relevant for the results.

\section{Notations and some  preliminary  results}
\label{sec:preliminares}

All along this paper we  consider an exterior domain  $\Omega =
\RN\backslash \hole$ as described in the Introduction, that is, the
complement of a compact set $\hole$, the \emph{hole},
which is the closure of a bounded smooth set and we will assume 
$\partial \Omega$ is of class $C^{2,\alpha}$ for some $0<\alpha<1$. 
We will also assume,  without loss of generality,  $0\in \mathring{\hole}$,
the interior of the hole, and observe that $\hole$ may have different
connected components, although $\Omega$ is connected. In this section
we use the notations and settings in \cite{DdTRB23,DdTRB24a} which we present below. 

In $\Omega$ we consider  the heat equation 
\begin{equation} \label{eq:heat_theta} 
	\left\{
	\begin{aligned}
		u_t-\Lap u = 0 \quad & in \ \Omega\times(0,\infty) \\
		B_\theta(u)=0 \quad & on \ \partial\Omega\times[0,\infty) \\
		u=u_0 \quad & in \ \Omega\times\{0\} , 
	\end{aligned}	
	\right. 
\end{equation}
where   we  consider Dirichlet, Robin or
Neumann homogeneous boundary conditions on $\partial \Omega$,  written in the form 
\begin{equation}
	\label{eqn:thetabc}
	B_\theta(u)\defeq \sin(\frac{\pi}{2}\theta(x))\frac{\partial u}{\partial n}+\cos(\frac{\pi}{2}\theta(x))u,
\end{equation}
where $\theta: \partial \Omega \longrightarrow [0,1]$ is of class
$C^{1,\alpha}(\partial \Omega)$ for some $0<\alpha<1$ and 
satisfies either  one of the
following cases in each connected component of $\partial \Omega$: 
\begin{enumerate}
	\item Dirichlet conditions: $\theta\equiv 0$
	\item Mixed Neumann and Robin conditions:
	$0<\theta\leq 1$. 
\end{enumerate}
In particular, if $\theta\equiv 1$ we recover Neumann
boundary conditions. 
In general, we will refer to these as homogeneous $\theta$-boundary
conditions.    Note that, by suitably choosing $\theta(x)$,
\eqref{eqn:thetabc} includes all boundary conditions of the 
form $\frac{\partial u}{\partial n}+b(x)u=0$. The restriction $0\leq
\theta \leq 1$ makes $b(x)\geq 0$ which is the standard dissipative
condition. The reason for these notations will be seen in the results
below about monotonicity of solutions with respect to $\theta$, see
\eqref{eqn:neugeqdir2} and \eqref{eq:comparison_kernels_theta}.

	As a general notation, for a given function $\theta$ as above, we  define the Dirichlet part of $\partial \Omega$ as
	\begin{displaymath}
		\partial^D \Omega\defeq \{x \in \partial \Omega \ : \ \theta(x)=0\},
	\end{displaymath}
	the Robin part of $\partial \Omega$ as
	\begin{displaymath}
		\partial^R \Omega\defeq \{x \in \partial \Omega \ : \ 0<\theta(x)<1\},
	\end{displaymath}
	and  the Neumann part of $\partial \Omega$ as
	\begin{displaymath}
		\partial^N \Omega\defeq \{x \in \partial \Omega \ : \
		\theta(x)=1\} . 
	\end{displaymath}	
	The conditions imposed on $\theta$ imply that $\partial^D \Omega$ is a
	union of connected components of $\partial \Omega$, although Neumann
	and Robin conditions can coexist in the same connected component of
	$\partial \Omega$.

  In general we will use a superscript $\theta$ to denote anything
  related to \eqref{eq:heat_theta}.  For example, the semigroup of
  solutions to \eqref{eq:heat_theta} will be denoted by $S^\theta(t)$
  and the associated kernel by $k^\theta(x,y,t)$. Sometimes, we may add as subscript
  $\Omega$ to indicate the dependence of these objects on the domain.

Then, we recall some results from  \cite{DdTRB23,DdTRB24a}. First,
\eqref{eq:heat_theta} defines a semigroup of solutions as 
$u(t;u_{0})= S^{\theta}(t) u_{0}$  for several classes of initial
data.  Actually the  semigroup $\{S^\theta(t)\}_{t>0}$   is  an order
preserving  semigroup of contractions in $L^p(\Omega)$ 
for $1\leq p\leq\infty$ which is $C^0$ if $p\neq \infty$ and
analytic if $1<p<\infty$. It is also   a  strongly continuous  analytic
semigroup in
$BUC_\theta(\Omega)$ which is the subspace of the space of bounded and
uniformly continuous functions in $\Omega$,  $BUC(\Omega)$, that vanish in the connected components of $\partial
\Omega$ in which $\theta=0$, see \cite{mora1983semilinear}. In particular, for  any
$u_0$ in such classes, 
	\begin{displaymath}
		\abs{S^\theta(t)u_0(x)}\leq S^\theta(t)\abs{u_0}(x), \qquad x\in
		\Omega, \ t>0 . 
	\end{displaymath}

Also, 
for $1\leq p\leq \infty$, 
\begin{equation}
	\label{eqn:Sexchange}
	\int_\Omega fS^\theta(t)g = \int_\Omega gS^\theta(t)f  \qquad
	\mbox{for all} \  f\in L^p(\Omega), \   g\in L^q(\Omega) 
\end{equation}
where $q$ is the conjugate of $p$,  that is
$\frac{1}{p}+\frac{1}{q}=1$. Hence, for $1\leq p<\infty$, the semigroup in $L^{q}(\Omega)$
is the adjoint of the semigroup in $L^{p}(\Omega)$. In particular, the
semigroup in $L^{\infty}(\Omega)$ is weak-* continuous.

Observe that in  $L^p(\Omega)$ for  $1<p<\infty$, the generator of the
semigroup is  the Laplacian  with domain
\begin{equation}
	D^{p}(\Lap_\theta)=\{u\in W^{2,p}(\Omega) \ : \ B_\theta(u) =
        0  \ 	\mbox{on $\partial\Omega$} \} 
      \end{equation}
and is  a sectorial operator, see  \cite{denk2004new}  and
      \cite{DdTRB23} for a simple proof when $p=2$. 
Furthermore, if $p=\infty$, as in \cite{lunardi} Corollary 3.1.21 and
Corollary 3.1.24, the generator is  the Laplacian  with domain
\begin{equation} \label{eq:domain_Linfty}
	D^\infty(\Lap_\theta)=\{u\in \bigcap_{p\geq
          1}W^{2,p}_{loc}(\adh{\Omega}) \ : \ u, \Lap u \in
        L^\infty(\Omega), \ B_\theta(u) = 0 \ 	\mbox{on $\partial\Omega$} \}
      \end{equation}
and is  also a sectorial operator with a non dense domain, so the
semigroup is analytic but not strongly continuous. Note that, by the Sobolev
embeddings, $D^\infty(\Lap_\theta)\subset
C^{1+\alpha}(\adh{\Omega})$ for any $\alpha \in (0,1)$.

For $1\leq p<\infty$ the semigroup above provides the unique solution
of \eqref{eq:heat_theta}, see e.g. Section 4.1 in \cite{pazy}. For
$p=\infty$ since the semigroup is not strongly continuous and the
domain \eqref{eq:domain_Linfty} is not dense, standard results do not
apply. Hence, we now give a uniqueness result.

\begin{theorem}
\label{thm:meaninglinf}
Let $u_0\in L^\infty(\Omega)$ and  $u(t):=S^\theta(t)u_0$. Then  $u\in
C^1((0,\infty),L^\infty(\Omega))$, for  $t>0$, $u(t)\in D^\infty(\Lap_\theta)$ as in
\eqref{eq:domain_Linfty} and  $\frac{du}{dt}(t)=\Lap u(t)$ and, as
$t\to 0^{+}$, 
\begin{equation}
u(t)\mywto{*} u_0 \qquad L^\infty(\Omega) . 
\end{equation}

Conversely if  $v\in C^1((0,\infty),L^\infty(\Omega))$ satisfies
$v(t)\in D^\infty(\Lap_\theta)$ and  $\frac{dv}{dt}(t)=\Lap v(t)$ for every $t>0$ and 
	$v(t)\mywto{*} u_0$, weak* in $L^\infty(\Omega)$, then we have that $v\equiv u$.
\end{theorem}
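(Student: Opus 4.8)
The plan is to treat the two assertions separately, with the uniqueness (converse) direction being where the real work lies. For the direct part, I would simply invoke that $\{S^\theta(t)\}$ is an analytic semigroup on $L^\infty(\Omega)$ with generator $\Lap_\theta$ and domain $D^\infty(\Lap_\theta)$ as in \eqref{eq:domain_Linfty}. By the general theory of analytic semigroups with possibly non-dense domain (Lunardi), for every $u_0$ the orbit $t\mapsto S^\theta(t)u_0$ is $C^\infty$ (indeed analytic) from $(0,\infty)$ into $L^\infty(\Omega)$, takes values in $D^\infty(\Lap_\theta)$ for $t>0$, and satisfies $\tfrac{du}{dt}(t)=\Lap u(t)$; this yields at once $u\in C^1((0,\infty),L^\infty(\Omega))$, the regularity $u(t)\in D^\infty(\Lap_\theta)$, and the equation. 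For the weak-$*$ trace I would pair against an arbitrary $g\in L^1(\Omega)$ and use the self-duality \eqref{eqn:Sexchange}, $\int_\Omega g\,S^\theta(t)u_0=\int_\Omega u_0\,S^\theta(t)g$; since the semigroup is $C^0$ on $L^1(\Omega)$ we have $S^\theta(t)g\to g$ in $L^1$, hence $\int_\Omega u_0\,S^\theta(t)g\to\int_\Omega u_0\,g$, which is exactly $u(t)\mywto{*}u_0$.

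For the converse, fix $s>0$ and a test function $\phi$ in $D^1(\Lap_\theta)$, the domain of the generator of the $L^1(\Omega)$ semigroup (a dense subset of $L^1(\Omega)$). The idea is a duality/backward-evolution argument: set $h(\tau)\defeq\int_\Omega v(\tau)\,S^\theta(s-\tau)\phi$ for $0<\tau<s$, show $h$ is constant, and then evaluate its two one-sided limits. Both factors are $C^1$ in $\tau$ (the first into $L^\infty$ by hypothesis, the second into $L^1$ since $\phi\in D^1(\Lap_\theta)$), so differentiating the continuous bilinear pairing and using $\tfrac{dv}{d\tau}=\Lap v(\tau)$ together with $\tfrac{d}{d\tau}S^\theta(s-\tau)\phi=-\Lap S^\theta(s-\tau)\phi$ gives $h'(\tau)=\int_\Omega(\Lap v(\tau))\,S^\theta(s-\tau)\phi-\int_\Omega v(\tau)\,\Lap S^\theta(s-\tau)\phi$.

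The hard part will be the symmetry identity $\int_\Omega(\Lap v)\psi=\int_\Omega v\,\Lap\psi$ for $v\in D^\infty(\Lap_\theta)$ and $\psi\in D^1(\Lap_\theta)$, which forces $h'\equiv 0$. This is precisely the adjointness between the $L^1$ and $L^\infty$ generators carried by \eqref{eqn:Sexchange} (equivalently, Green's second identity, whose boundary integrals vanish because $v$ and $\psi$ satisfy the \emph{same} $\theta$-boundary conditions). The one genuine technical point is justifying the integration by parts on the unbounded domain $\Omega$, which I would handle by exhausting $\Omega$ with balls $B_R$ and letting $R\to\infty$, the fluxes over $\partial B_R$ being controlled since $\psi,\nabla\psi\in L^1$ while $v,\Lap v\in L^\infty$. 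Taking $\psi=S^\theta(s-\tau)\phi$ then makes $h$ constant on $(0,s)$.

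It remains to compute the limiting values of this constant. As $\tau\to s^-$ we have $v(\tau)\to v(s)$ in $L^\infty$ (since $v\in C^1$) and $S^\theta(s-\tau)\phi\to\phi$ in $L^1$ by strong continuity at $0$, so $h(\tau)\to\int_\Omega v(s)\phi$. As $\tau\to0^+$, the hypothesis $v(\tau)\mywto{*}u_0$ yields through Banach--Steinhaus a uniform bound $\sup_{0<\tau<\delta}\|v(\tau)\|_{L^\infty(\Omega)}<\infty$; writing $S^\theta(s-\tau)\phi=S^\theta(s)\phi+\bigl[S^\theta(s-\tau)-S^\theta(s)\bigr]\phi$ and using $S^\theta(s)\phi\in L^1(\Omega)$ together with $\|[S^\theta(s-\tau)-S^\theta(s)]\phi\|_{L^1(\Omega)}\to0$, I obtain $h(\tau)\to\int_\Omega u_0\,S^\theta(s)\phi$. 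Equating the two limits and applying \eqref{eqn:Sexchange} once more gives $\int_\Omega v(s)\phi=\int_\Omega u_0\,S^\theta(s)\phi=\int_\Omega (S^\theta(s)u_0)\phi=\int_\Omega u(s)\phi$. Since $\phi$ ranges over a dense subset of $L^1(\Omega)$ and $v(s),u(s)\in L^\infty(\Omega)$, this forces $v(s)=u(s)$, and as $s>0$ was arbitrary, $v\equiv u$.
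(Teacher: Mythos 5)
Your proof is correct and follows essentially the same route as the paper: the direct part via analyticity of the semigroup plus duality with the $C^0$ semigroup on $L^1(\Omega)$, and the converse via the conserved pairing $\tau\mapsto\int_\Omega v(\tau)\,S^\theta(s-\tau)\phi$ (the paper's $I(s)$), whose constancy and endpoint limits give $v(s)=S^\theta(s)u_0$. The only cosmetic differences are that the paper tests against $\varphi\in C^\infty_c(\Omega)$ and kills $I'$ by commuting $\Lap$ with the semigroup, using \eqref{eqn:Sexchange} and the definition of the distributional Laplacian, whereas you test against the dense class $D^1(\Lap_\theta)$ and invoke the $L^1$--$L^\infty$ adjointness (Green's identity with an exhaustion by balls); both are valid.
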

\begin{proof}
During the proof we suppress the superscript $\theta$ for
simplicity. The first statement is because the semigroup in
$L^{\infty}(\Omega)$ is analytic with the domain
$D^\infty(\Lap_\theta)$ in \eqref{eq:domain_Linfty}. The weak-*
convergence $u(t)\mywto{*} u_0$ in $L^\infty(\Omega)$ is a consequence
of duality, since for  $\varphi\in L^1(\Omega)$ 
\begin{equation}
\int_\Omega u(t) \varphi = \int_\Omega S(t)u_0 \varphi =
\int_\Omega u_0 S (t)\varphi \to \int_\Omega u_0 \varphi 
\end{equation}
when $t\to 0$  because $S(t)$ is a $C^0$ semigroup in $L^1(\Omega)$. 

For the converse, given $\varphi\in C^\infty_c(\Omega)$, define, for $t>0$,
\begin{equation}
	I(s):=\int_\Omega \varphi S(t-s)v(s)  = \int_\Omega v(s)
	S(t-s)\varphi \qquad s\in [0,t] . 
\end{equation}
Then $I(s)$ is a continuous function in $(0,t]$  because
$v\in C((0,\infty), L^\infty(\Omega))$ and
$S(t-\cdot)\varphi\in C([0,t],  L^1(\Omega))$. In addition, $I(s)$ is
continuous up to $0$  because,  when $s\to 0$, $S(t-s)\varphi\to S(t)\varphi$ in
$L^1(\Omega)$ and $v(s)\mywto{*}u_0$ in
$L^\infty(\Omega)$.

Now, we compute  the  derivative of $I(s)$ in
$(0,t)$. As $v\in C^1((0,\infty),L^\infty(\Omega))$ with
$\frac{dv}{dt}(t)=\Lap v(t)$, and at the same time,
$S(t-\cdot)\varphi\in C^1((0,t),L^1(\Omega))$ with
$\frac{d}{dt}S(t-s)\varphi=\Lap S(t-s)\varphi$ we have
\begin{equation}
	I'(s) = \int_\Omega \Lap v (s) S(t-s)\varphi-\int_\Omega v(s)
        \Lap S(t-s)\varphi . 
\end{equation}
Using that $\Lap$ commutes with the semigroup, as well as \eqref{eqn:Sexchange}, we obtain
\begin{equation}
	I'(s) = \int_\Omega \Lap S(t-s)v (s) \varphi -\int_\Omega  S(t-s)v(s) \Lap \varphi = 0
\end{equation}
where we have used the definition of the weak derivatives, as $\varphi\in C^\infty_c(\Omega)$.
 Therefore, $I(0)=I(t)$ so
\begin{equation}
	\int_\Omega v(t)\varphi = \int_\Omega S(t)u_0 \varphi, \quad
        t>0 
\end{equation}
and we obtain $v(t)=S(t)u_0$ almost everywhere for $t>0$.
\end{proof}

Moreover, the semigroup has an integral positive
kernel (see \cite[Theorem 2.5]{DdTRB23}), that is, $k^\theta: 
\Omega \times \Omega \times (0,\infty) \to
(0,\infty)$ such that for all $1\leq p\leq\infty$ and
$u_{0}\in L^{p}(\Omega)$,  
\begin{equation}
	\label{eqn:ackrnpre}
	S^\theta(t)u_0(x)=\int_\Omega k^\theta(x,y,t)u_0(y)dy
	, \qquad x\in \Omega, \quad t>0. 
\end{equation}
In addition, $k^\theta(x,y,t)=k^\theta(y,x,t)$, which reflects the
property \eqref{eqn:Sexchange}.

If we consider  $S^{\theta_1}(t)$ and $S^{\theta_2}(t)$   the
semigroups above for different $\theta$-boundary
conditions  we have (see \cite[Theorem 2.10]{DdTRB23}) that if $	0 \leq \theta_1\leq
\theta_2\leq 1 $ then for $u_{0}\geq 0$ we have
\begin{equation}
	\label{eqn:neugeqdir2}
S^{\theta_1}(t)u_0\leq	S^{\theta_2}(t)u_0 \quad t>0 , 
\end{equation}
or equivalently,  the 
corresponding heat kernels satisfy 
\begin{equation}
\label{eq:comparison_kernels_theta}
  0< k^{\theta_1}(x,y,t)\leq k^{\theta_2}(x,y,t) \qquad
	x,y\in \Omega,  \ t>0. 
      \end{equation}
In particular,  for any $\theta$-boundary conditions we have  Gaussian upper
bounds for the  heat kernel of the form
	\begin{equation}
		\label{eqn:gyryabound}
		0 < k^\theta(x,y,t) \leq C\frac{e^{-\frac{\abs{x-y}^2}{4ct}}}{t^{N/2}} \qquad
		x,y\in \Omega , \quad t>0 
              \end{equation}
for some  constants $c,C>0$, since they hold for Neumann boundary
conditions (see \cite{gyryathesis} and  also \cite{gyrya2011neumann} Theorem 3.10), that is
for $\theta \equiv 1$, and \eqref{eq:comparison_kernels_theta}, 
see \cite{DdTRB23} Section 2.

The bounds above imply, by using Proposition \ref{prop:convgauss} in
Appendix \ref{app:young}, the following result. 

\begin{corollary}
  \label{cor:LpLq_estimates}
  For any $u_{0}\in L^{p}(\Omega)$ and $1\leq p\leq q \leq \infty$ we
  have 
          \begin{equation}
     \label{eqn:LpLq_estimates_theta}
     \norm{S^\theta(t)u_0}_{L^{q}(\Omega)}\leq
     \frac{C}{t^{\frac{N}{2}(\frac{1}{p}- \frac{1}{q})}} \norm{u_0}_{L^{p}(\Omega)} \quad t>0 . 
   \end{equation}
\end{corollary}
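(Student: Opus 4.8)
The plan is to reduce the estimate to a convolution with a Gaussian on the whole space $\RN$ and then invoke Young's inequality as packaged in Proposition \ref{prop:convgauss}. First I would use the kernel representation \eqref{eqn:ackrnpre} together with the pointwise bound $\abs{S^\theta(t)u_0(x)} \leq S^\theta(t)\abs{u_0}(x)$ and the positivity of $k^\theta$ to write, for every $x \in \Omega$ and $t>0$,
\begin{equation}
  \abs{S^\theta(t)u_0(x)} \leq \int_\Omega k^\theta(x,y,t)\abs{u_0(y)}\,dy \leq C\int_\Omega \frac{e^{-\frac{\abs{x-y}^2}{4ct}}}{t^{N/2}}\abs{u_0(y)}\,dy,
\end{equation}
using the Gaussian upper bound \eqref{eqn:gyryabound}. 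Extending $\abs{u_0}$ by zero outside $\Omega$ and enlarging the domain of integration from $\Omega$ to $\RN$ only increases the right-hand side, so the last integral is dominated by $(g_t * \abs{u_0})(x)$, where $g_t(z) = t^{-N/2} e^{-\abs{z}^2/(4ct)}$ and the convolution is taken over $\RN$.

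Next I would apply Young's convolution inequality, which is the content of Proposition \ref{prop:convgauss}, with exponents satisfying $\frac{1}{r} = 1 - \frac{1}{p} + \frac{1}{q}$; since $p \leq q$ this forces $1 \leq r \leq \infty$, so the estimate is admissible. This bounds the $L^q(\Omega)$ norm of $S^\theta(t)u_0$ (which is no larger than the $L^q(\RN)$ norm of $g_t * \abs{u_0}$) by $C\norm{g_t}_{L^r(\RN)}\norm{u_0}_{L^p(\Omega)}$. A direct computation of the Gaussian integral gives $\norm{g_t}_{L^r(\RN)} = c_r\, t^{-\frac{N}{2}(1 - \frac{1}{r})}$, and substituting the exponent relation yields $1 - \frac{1}{r} = \frac{1}{p} - \frac{1}{q}$, which is exactly the claimed power of $t$. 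Collecting the constants into $C$ then produces \eqref{eqn:LpLq_estimates_theta}.

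The argument is essentially routine once the Gaussian bound is in hand; the only points requiring care are the exponent bookkeeping in Young's inequality and the two endpoints. When $p = q$ one has $r = 1$ and $\norm{g_t}_{L^1(\RN)}$ is a constant independent of $t$, recovering the contraction property; when $q = \infty$ one has $r = p'$ and the convolution is controlled pointwise by Hölder's inequality, so that the norm on the left is read as an essential supremum. I expect the main (minor) obstacle to be confirming that Proposition \ref{prop:convgauss} is stated in a form that covers all these cases uniformly, including the $L^\infty$ endpoint, rather than any genuine analytic difficulty.
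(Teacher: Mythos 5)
Your proposal is correct and follows essentially the same route as the paper, which proves the corollary by combining the Gaussian upper bound \eqref{eqn:gyryabound} for $k^\theta$ with Proposition \ref{prop:convgauss}; your worry about the endpoints is unfounded, since that proposition is stated for all $1\leq p\leq q\leq\infty$ (with the $L^p\subset L^{p,\infty}$ embedding handling ordinary Lebesgue data). The only cosmetic difference is that you unpack the Young's-inequality exponent bookkeeping explicitly, whereas the paper delegates it entirely to Proposition \ref{prop:convgauss}.
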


Concerning regularity of solutions and the kernels, we  can state the
following results from \cite[Theorem 2.2]{DdTRB24a}.

\begin{theorem}
	\label{thm:prop2}
The  following properties hold true.  
	\begin{enumerate}
        \item
For $u_0\in L^p(\Omega)$, with $1\leq p\leq \infty$, 
    $u (x,t)= S^\theta(t)u_0(x)$ is a
    $C^{\infty}(\Omega\times(0,\infty))\cap C^{1}(\adh{\Omega}\times(0,\infty))$ solution of the heat
    equation, that is
    \begin{displaymath}
      \left\{
        \begin{aligned}
          u_t(x,t)-\Lap u(x,t) & = 0 &&
          \forall(x,t)\in\Omega\times(0,\infty)
          \\
          B_\theta(u)(x,t) & =0 && \forall x\in \partial \Omega, \
          \forall t>0 .
        \end{aligned}
      \right. 
    \end{displaymath}

  \item
    The integral kernel is analytic in time.
Furthermore, $k^\theta(\cdot,y, \cdot\cdot)$ belongs to
$C^{\infty}(\Omega\times(0,\infty))\cap C^{1}(\adh{\Omega}\times(0,\infty))$ and
satisfies the heat equation with $\theta$-boundary conditions for any fixed $y\in\Omega$.

	\end{enumerate}
\end{theorem}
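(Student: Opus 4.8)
The plan is to reduce every case to the situation where the semigroup is analytic, exploit that analyticity together with elliptic boundary estimates, and finally deduce the statement for the kernel from the statement for solutions via the semigroup identity. First I would make a reduction that handles all $p$ at once. Given $u_0\in L^p(\Omega)$ and $t>0$, fix $0<s<t$ and write $u(\cdot,t)=S^\theta(t-s)\big[S^\theta(s)u_0\big]$. By the smoothing estimate of Corollary \ref{cor:LpLq_estimates}, $v_s:=S^\theta(s)u_0$ belongs to $L^r(\Omega)$ for every $r\in[p,\infty]$, so I may treat $v_s$ as an initial datum in a space where $\{S^\theta(t)\}$ is analytic (any $L^r$ with $1<r<\infty$, or $BUC_\theta(\Omega)$, or $L^\infty(\Omega)$). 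Since $s>0$ is arbitrary, any regularity obtained for $t>s$ extends to all $t>0$, and it suffices to argue in the analytic range.

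For the interior smoothness, analyticity gives that $t\mapsto u(\cdot,t)$ is a $C^\infty$ map of $(0,\infty)$ into the domain, with $u(\cdot,t)$ lying in the domain of every power of $\Lap_\theta$ and $\frac{d^n}{dt^n}u(\cdot,t)=\Lap^n u(\cdot,t)$. Bootstrapping with interior elliptic regularity for $\Lap$ and the Sobolev embeddings yields $u(\cdot,t)\in C^\infty(\Omega)$ for each $t$; converting time derivatives into spatial ones through $\partial_t u=\Lap u$ and using the analytic $t$-dependence shows all mixed derivatives $\partial_t^j\partial_x^\beta u$ are continuous, so $u\in C^\infty(\Omega\times(0,\infty))$ (equivalently, hypoellipticity of $\partial_t-\Lap$ applies once $u$ is known to solve the equation distributionally). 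The delicate part is the regularity up to $\adh\Omega$: here I would use the embedding $D^\infty(\Lap_\theta)\subset C^{1+\alpha}(\adh\Omega)$ noted above, which encodes the elliptic Schauder estimates for the oblique problem $B_\theta$ and is available because $\partial\Omega\in C^{2,\alpha}$ and $\theta\in C^{1,\alpha}(\partial\Omega)$. For fixed $t>0$ this gives $u(\cdot,t)\in C^{1+\alpha}(\adh\Omega)$, and analyticity makes $t\mapsto u(\cdot,t)$ continuous ($C^1$, even) into $D^\infty(\Lap_\theta)$, hence into $C^{1+\alpha}(\adh\Omega)$, so $u$ and $\nabla_x u$ are jointly continuous on $\adh\Omega\times(0,\infty)$. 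Since $u(\cdot,t)$ lies in the domain of $\Lap_\theta^2$, one has $\partial_t u(\cdot,t)=\Lap u(\cdot,t)\in D^\infty(\Lap_\theta)\subset C^{1+\alpha}(\adh\Omega)$ with continuous $t$-dependence, giving continuity of $\partial_t u$ up to the boundary. Thus $u\in C^1(\adh\Omega\times(0,\infty))$, and $B_\theta(u)(\cdot,t)=0$ holds pointwise for each $t$ because $u(\cdot,t)\in D^\infty(\Lap_\theta)$.

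For statement (ii) I would reduce the kernel to (i). Fix $y\in\Omega$ and $s>0$; by the Gaussian bound \eqref{eqn:gyryabound} and symmetry, $k^\theta(\cdot,y,s)\in L^1(\Omega)\cap L^\infty(\Omega)$. The semigroup (Chapman--Kolmogorov) identity gives $k^\theta(\cdot,y,t)=S^\theta(t-s)\big[k^\theta(\cdot,y,s)\big]$ for $t>s$, so applying (i) to the datum $k^\theta(\cdot,y,s)$ yields the $C^\infty(\Omega\times(0,\infty))\cap C^1(\adh\Omega\times(0,\infty))$ regularity in $(x,t)$ and the fact that $k^\theta(\cdot,y,\cdot)$ solves the heat equation with $\theta$-boundary conditions; letting $s\downarrow 0$ covers all $t>0$, and analyticity in time is inherited from the analyticity of the semigroup applied to the fixed datum $k^\theta(\cdot,y,s)$. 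The step I expect to be the main obstacle is precisely the passage to the boundary: it rests on the embedding $D^\infty(\Lap_\theta)\subset C^{1+\alpha}(\adh\Omega)$ and on continuity in $t$ into this space, which is why one must work for $t>0$, where analyticity holds, rather than up to $t=0$, where strong continuity fails when $p=\infty$.
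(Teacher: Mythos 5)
This theorem is not proved in the present paper at all: it is imported verbatim from Part~I (cited as \cite[Theorem 2.2]{DdTRB24a}), so there is no in-paper proof to compare your argument against. Taken on its own terms, your proposal is a sound and essentially standard way to establish the result: the reduction $u(t)=S^\theta(t-s)\bigl[S^\theta(s)u_0\bigr]$ via the smoothing estimate of Corollary \ref{cor:LpLq_estimates} correctly collapses all $1\leq p\leq\infty$ into the analytic range; interior $C^\infty$ regularity via $u(\cdot,t)\in D(\Lap_\theta^n)$ for all $n$ plus elliptic bootstrapping (or hypoellipticity of $\partial_t-\Lap$) is fine; and the Chapman--Kolmogorov identity $k^\theta(\cdot,y,t)=S^\theta(t-s)\bigl[k^\theta(\cdot,y,s)\bigr]$ together with the Gaussian bound \eqref{eqn:gyryabound} legitimately reduces (ii) to (i). Two points deserve to be made explicit rather than implicit. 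First, your boundary step uses not just the inclusion $D^\infty(\Lap_\theta)\subset C^{1+\alpha}(\adh{\Omega})$ but its \emph{continuity} with respect to the graph norm $\norm{u}_{L^\infty}+\norm{\Lap u}_{L^\infty}$; this rests on global $W^{2,p}$ elliptic estimates up to the boundary for the mixed Dirichlet/Robin problem (available here since $\partial\Omega\in C^{2,\alpha}$ and $\theta\in C^{1,\alpha}$), and it is this quantitative version that converts analyticity of $t\mapsto u(t)$ in $D^\infty(\Lap_\theta)$ into joint continuity of $u$, $\nabla_x u$ and $u_t$ on $\adh{\Omega}\times(0,\infty)$. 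Second, for the pointwise analyticity of $t\mapsto k^\theta(x,y,t)$ you should compose the $D^\infty(\Lap_\theta)$-valued analytic map with the embedding into $C(\adh{\Omega}\cap B_R)$ \emph{before} evaluating at $x$, since point evaluation is not continuous on $L^\infty(\Omega)$. With these two clarifications your argument gives a complete, self-contained proof of the cited statement.
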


Now we give some results on the asymptotic profile $\Phi$ mentioned in
the Introduction, which actually depends on the boundary conditions,
so we will denote hereafter as $\Phi^{\theta}$. This is a  harmonic function
in $\Omega$, $\Phi^{\theta}  \in C^2(\overline{\Omega})\cap
C^{\infty}(\Omega)$ and satisfies the boundary conditions 
$B_{\theta} (\Phi)\equiv 0$ on $\partial \Omega$. It can be constructed either
as the monotonically decreasing limit
\begin{equation}
  \label{eq:asymptotic_profile_parabolic}
	\Phi^{\theta} (x) =\lim_{t\to\infty} u(x,t; 1_\Omega) =
        \lim_{t\to\infty}  S^\theta(t)1_\Omega (x) \quad x\in \Omega, 
\end{equation}
that is, the solution of \eqref{eq:heat_theta}  with $u_{0} =
1_\Omega$, 
or as  the monotonically decreasing limit 
\begin{equation}
  \label{eq:asymptotic_profile_elliptic}
	\Phi^{\theta}  (x)=\lim_{R\to\infty} \phi^{\theta}_R(x) \quad x\in \Omega, 
\end{equation}
where $ \phi_R^{\theta}$ are harmonic in $ \Omega_R\defeq \Omega\cap B(0,R)$
and satisfy $B(\phi_{R}^{\theta})(x) =0$ for $x\in\partial \Omega$ and $
\phi_{R}^{\theta} (x) =1$ if $\abs{x}=R$, 
see  \cite{DdTRB23} Section 3. 

For integrable data this function determines the exact amount of mass
lost through the hole since  for $u_{0} \in
L^{1}(\Omega)$ the \emph{asymptotic mass} of the solution (that is the remaining mass
not lost through the boundary)
is given by
\begin{equation}
  \label{eq:asymptotic_mass}
  m_{u_0} \defeq \lim_{t\to\infty}\int_\Omega u(x,t)\, dx = \int_\Omega u_0(x) \Phi^{\theta}  (x) \,
  dx . 
\end{equation}
Of course,  for Neumann boundary conditions, that is $\theta\equiv 1$,
$\Phi^{1}  \equiv 1$ in any dimensions, hence no loss of mass at all for any solution. For Robin or
Dirichlet boundary conditions, if $N\leq 2$ then $\Phi^{\theta}  \equiv 0$. That is,
all mass is lost through the boundary. On the other hand, if $N \geq
3$, $\Phi^\theta \not\equiv 0$ so some mass remains, and we have the following estimates from \cite[Proposition 2.6]{DdTRB24a}

\begin{proposition}
	\label{prop:estpro}

Let $N\geq 3$  and $\theta \not\equiv 1$. Then, there exists $C>0$ such that
\begin{equation}
		\label{eqn:boundphi}
		1-\frac{C}{\abs{x}^{N-2}}\leq \Phi^{\theta}(x) \leq 1 \qquad \forall x\in \Omega.
	\end{equation}	
	In addition, for any multi-index $\abs{\beta}\neq 0$, if $\Phi^\theta \in C^{\abs{\beta}}(\adh{\Omega})$ (which is true if $\partial\Omega$ and $\theta$ are sufficiently regular), there exists $C_\beta>0$ such that
	\begin{equation}
		\label{eqn:boundpsi}
		\abs{D^\beta\Phi^\theta(x)}\leq \frac{C_{\beta}}{\abs{x}^{N-2+\abs{\beta}}} \qquad x\in \Omega.
	\end{equation}
\end{proposition}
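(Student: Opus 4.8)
The plan is to treat the three claims—boundedness from above, the sharp lower bound, and the derivative estimates—separately, reducing everything to the behaviour of the harmonic function $g\defeq 1-\Phi^\theta$ on the exterior region far from the hole. First I would record the elementary facts. The upper bound $\Phi^\theta\le 1$ is immediate from the construction \eqref{eq:asymptotic_profile_parabolic}: since $0\le \theta\le 1$, the monotonicity \eqref{eqn:neugeqdir2} gives $S^\theta(t)1_\Omega\le S^{1}(t)1_\Omega=1$ (the constant $1$ is stationary for Neumann, $\Phi^{1}\equiv 1$), and letting $t\to\infty$ yields $0\le\Phi^\theta\le 1$. Moreover $\Phi^\theta$ is harmonic, nonnegative and, for $N\ge 3$ and $\theta\not\equiv 1$, not identically zero, so the strong maximum principle gives $\Phi^\theta>0$ throughout the connected set $\Omega$. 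Since $\hole$ is compact with $0\in\mathring{\hole}$, I fix $R_0>0$ with $\hole\subset B(0,R_0)$, so that the exterior region $E\defeq\{|x|>R_0\}$ is contained in $\Omega$ and $\Phi^\theta$ is harmonic on $E$.

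For the lower bound I would compare $\Phi^\theta$ on $E$ with the explicit radial harmonic function
\[
  w(x)=1-(1-m)\Big(\frac{R_0}{|x|}\Big)^{N-2},\qquad m\defeq \min_{|x|=R_0}\Phi^\theta>0,
\]
which is harmonic in $E$ (the singularity sits at $0\in\mathring{\hole}$), equals $m$ on $\{|x|=R_0\}$ and tends to $1$ at infinity. On the inner boundary $\{|x|=R_0\}$ one has $\Phi^\theta\ge m=w$, while $\Phi^\theta-w\to 0$ as $|x|\to\infty$ because $\Phi^\theta(x)\to 1$. The difference $\Phi^\theta-w$ is harmonic and bounded in $E$, so an exterior minimum principle—obtained by applying the ordinary minimum principle on the annuli $\{R_0<|x|<R\}$ and letting $R\to\infty$, the outer boundary contribution being negligible since $\sup_{|x|=R}|\Phi^\theta-w|\to 0$—yields $\Phi^\theta\ge w$ on $E$. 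This is exactly $\Phi^\theta(x)\ge 1-(1-m)R_0^{N-2}|x|^{2-N}$ for $|x|\ge R_0$. On the complementary region $\overline\Omega\cap\{|x|\le R_0\}$ the quantity $|x|^{2-N}$ is bounded below, so the bound \eqref{eqn:boundphi} holds there after enlarging the constant, using only $\Phi^\theta\ge 0$.

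For the derivative bounds \eqref{eqn:boundpsi}, since $|\beta|\ne 0$ one has $D^\beta\Phi^\theta=-D^\beta g$ with $g=1-\Phi^\theta$ harmonic in $E$ and, by \eqref{eqn:boundphi}, satisfying $0\le g(x)\le C|x|^{2-N}$ there. For $|x|>2R_0$ I would apply the interior derivative estimates for harmonic functions on the ball $B(x,|x|/2)\subset E$, namely $|D^\beta g(x)|\le C_\beta |x|^{-|\beta|}\sup_{B(x,|x|/2)}|g|$; since every $y\in B(x,|x|/2)$ has $|y|\ge |x|/2$, one gets $\sup_{B(x,|x|/2)}|g|\le C|x|^{2-N}$, whence $|D^\beta\Phi^\theta(x)|\le C_\beta|x|^{-(N-2+|\beta|)}$. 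On the remaining compact set $\overline\Omega\cap\{|x|\le 2R_0\}$ the assumed regularity $\Phi^\theta\in C^{|\beta|}(\overline\Omega)$ makes $D^\beta\Phi^\theta$ bounded while $|x|^{-(N-2+|\beta|)}$ is bounded below, so \eqref{eqn:boundpsi} again follows after enlarging $C_\beta$.

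The main obstacle is the rigorous control at infinity in the lower bound: one must ensure that the exterior minimum principle applies, i.e. that $\Phi^\theta-1$ tends to $0$ uniformly on large spheres so the outer boundary term of the annular comparison is negligible. An alternative that avoids invoking an exterior principle directly is the Kelvin transform: $\tilde g(y)=|y|^{2-N}g(y/|y|^2)$ is harmonic in a punctured neighbourhood of $0$, and $g\to 0$ at infinity forces $\tilde g(y)=o(|y|^{2-N})$, so the singularity is removable; boundedness of the extended $\tilde g$ then gives $g(x)=O(|x|^{2-N})$ directly, reproving \eqref{eqn:boundphi} without a comparison function.
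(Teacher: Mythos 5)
The paper does not actually prove this proposition: it is quoted from \cite[Proposition 2.6]{DdTRB24a}, so your argument has to stand on its own. Most of it does. The upper bound via \eqref{eqn:neugeqdir2} and $S^{1}(t)1_\Omega=1_\Omega$, the strict positivity of $\Phi^\theta$ from the strong maximum principle, the reduction of \eqref{eqn:boundpsi} to interior derivative estimates for the harmonic function $g=1-\Phi^\theta$ on the balls $B(x,\abs{x}/2)\subset\{\abs{x}>R_0\}$, and the handling of the compact part of $\adh{\Omega}$ are all correct and standard.

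There is, however, one genuine gap, and it sits exactly at the heart of \eqref{eqn:boundphi}: in the exterior comparison you discard the outer boundary contribution ``because $\Phi^\theta(x)\to 1$'', but this is nowhere established and is not among the facts recorded before the proposition (only harmonicity, $0\le\Phi^\theta\le1$, and $\Phi^\theta\not\equiv0$ for $N\ge3$ are available; the assertion $\Phi\to1$ in the Introduction merely describes the results being imported). What comes for free from general theory is weaker: a bounded harmonic function on an exterior domain in $N\ge3$ has \emph{some} limit $\ell\in[0,1]$ at infinity (Kelvin transform plus the Laurent-type expansion at the resulting isolated singularity), and your annulus argument then only yields $\Phi^\theta(x)\ge\ell-C\abs{x}^{2-N}$. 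Identifying $\ell=1$ is precisely the nontrivial content of the lower bound, and your Kelvin-transform alternative is circular in the same way, since it also presupposes $g\to0$ at infinity. The standard repair is to run the comparison at the level of the elliptic approximants \eqref{eq:asymptotic_profile_elliptic}, whose behaviour at the outer boundary is known by construction: $\phi_R^0$ is harmonic in $\Omega\cap B(0,R)$, satisfies $0\le\phi_R^0\le1$, vanishes on $\partial\Omega$ and equals $1$ on $\abs{x}=R$, so on the annulus $\{R_0<\abs{x}<R\}$ the classical maximum principle gives $\phi_R^0(x)\ge\bigl(R_0^{2-N}-\abs{x}^{2-N}\bigr)/\bigl(R_0^{2-N}-R^{2-N}\bigr)$; letting $R\to\infty$ yields $\Phi^0(x)\ge1-(R_0/\abs{x})^{N-2}$, and the monotonicity \eqref{eqn:neugeqdir2} applied to $u_0=1_\Omega$ gives $\Phi^\theta\ge\Phi^0$. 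With \eqref{eqn:boundphi} established this way, your derivative estimates go through unchanged.
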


The following lemma from \cite[Lemma 3.2]{DdTRB24a} states that the
$\theta$-heat kernel in $\Omega$ and the kernel in $\RN$ are similar in
$L^1(\Omega)$ when the source point $y$ is far away from the hole. The
Lemma is stated for $N\geq 3$. Notice that
\eqref{eqn:lemma:kerncoml1biseq3} below is also true for $N\leq 2$ but gives
no interesting  information because the asymptotic profile is $\Phi^0\equiv
0$. Also, recall that the heat kernel in $\R^{N}$ is given by $k_{\mathbb{R}^N}(x,y,t)= G(x-y,t)$. 

\begin{lemma}
	\label{lemma:kerncoml1bis}
Assume  $N\geq 3$ and let  $k^{\theta}(x,y,t)$ be the heat kernel for $\theta-$boundary conditions
and $k_{\mathbb{R}^N}(x,y,t)$ the heat kernel in the whole space. Then 
\begin{equation}
\label{eqn:lemma:kerncoml1biseq3}
\int_\Omega \abs{k^{\theta}(x,y,t)-k_{\mathbb{R}^N}(x,y,t)}dx\leq
2(1-\Phi^0(y))+\int_\hole k_\RN(x,y,t)dx \qquad y\in \Omega, 
\end{equation}
where $\Phi^0$ is the asymptotic profile of $\Omega$ for Dirichlet
boundary conditions. In particular 
	\begin{equation}
		\label{eqn:lemma:kerncoml1biseq1}
		\limsup_{t\to\infty}\int_\Omega
                \abs{k^{\theta}(x,y,t)-k_{\mathbb{R}^N}(x,y,t)}dx\leq
                2(1-\Phi^0(y)) \qquad y\in \Omega . 
	\end{equation}
	Furthermore, for all $x\in\Omega$, 
	\begin{equation}
		\label{eqn:lemma:kerncoml1biseq2}
		\limsup_{t\to\infty}\limsup_{\abs{y}\to
                  \infty}\int_\Omega
                \abs{k^{\theta}(x,y,t)-k_{\mathbb{R}^N}(x,y,t)}dx=
           \limsup_{\abs{y}\to
                  \infty}     \limsup_{t\to\infty}\int_\Omega
                \abs{k^{\theta}(x,y,t)-k_{\mathbb{R}^N}(x,y,t)}dx =0
                . 
	\end{equation}
\end{lemma}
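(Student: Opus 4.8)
The plan is to prove the pointwise bound \eqref{eqn:lemma:kerncoml1biseq3} first, and then deduce \eqref{eqn:lemma:kerncoml1biseq1} and \eqref{eqn:lemma:kerncoml1biseq2} by letting $t\to\infty$ and $|y|\to\infty$ in it. Fix $y\in\Omega$ and $t>0$. Throughout I would use the mass identities coming from the symmetry of the kernels: since $k^\theta(x,y,t)=k^\theta(y,x,t)$ we have $\int_\Omega k^{\theta}(x,y,t)\,dx = S^\theta(t)1_\Omega(y)$, while $\int_{\RN}k_\RN(x,y,t)\,dx=1$ gives $\int_\Omega k_\RN(x,y,t)\,dx = 1-\int_\hole k_\RN(x,y,t)\,dx$.

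The argument rests on three comparisons: (a) the monotonicity \eqref{eq:comparison_kernels_theta}, giving $k^0\le k^\theta\le k^1$ on $\Omega$; (b) the Neumann normalisation $S^1(t)1_\Omega\equiv 1$ (the constant $1$ is the stationary Neumann solution, equivalently $\Phi^1\equiv1$), whence by (a) $S^\theta(t)1_\Omega(y)\le 1$; and (c) the domination $k^0(x,y,t)\le k_\RN(x,y,t)$ of the Dirichlet kernel of $\Omega$ by the whole-space kernel. Item (c) is not provided directly by the results quoted above, since \eqref{eq:comparison_kernels_theta} only compares kernels for different $\theta$ on the same $\Omega$, and this is the step I expect to be the main obstacle: I would obtain it from the parabolic maximum principle (equivalently, the standard domain monotonicity of Dirichlet heat kernels) applied to $v(x,t):=k_\RN(x,y,t)-k^0(x,y,t)$, which solves the heat equation in $\Omega$, is nonnegative on $\partial\Omega$ because $k^0=0$ there, and has the same $\delta_y$ singularity as $t\to0^+$ so that the singular parts cancel; the Gaussian bound \eqref{eqn:gyryabound} controls the behaviour at infinity and legitimises the comparison on the unbounded domain.

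With these in hand I would split $|k^\theta-k_\RN| = (k^\theta-k_\RN) + 2(k_\RN-k^\theta)^+$ and integrate over $\Omega$. For the signed part, $\int_\Omega(k^\theta-k_\RN)\,dx = S^\theta(t)1_\Omega(y)-1+\int_\hole k_\RN(x,y,t)\,dx \le \int_\hole k_\RN(x,y,t)\,dx$ by (b); this is precisely where the term $\int_\hole k_\RN$ in \eqref{eqn:lemma:kerncoml1biseq3} originates. For the positive part, $(k_\RN-k^\theta)^+\le(k_\RN-k^0)^+ = k_\RN-k^0$ by (a) and (c), so $2\int_\Omega(k_\RN-k^\theta)^+\,dx \le 2\big(1-\int_\hole k_\RN\,dx - S^0(t)1_\Omega(y)\big)\le 2\big(1-\Phi^0(y)\big)$, where the last step uses $S^0(t)1_\Omega(y)\ge\Phi^0(y)$ from the monotone limit \eqref{eq:asymptotic_profile_parabolic}. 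Adding the two estimates yields \eqref{eqn:lemma:kerncoml1biseq3}.

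Finally, the two limit statements follow by inspecting the right-hand side of \eqref{eqn:lemma:kerncoml1biseq3}. Since $k_\RN(x,y,t)\le(4\pi t)^{-N/2}$, one has $\int_\hole k_\RN(x,y,t)\,dx\le|\hole|(4\pi t)^{-N/2}\to0$ as $t\to\infty$, which gives \eqref{eqn:lemma:kerncoml1biseq1}; letting then $|y|\to\infty$ and using $\Phi^0(y)\to1$ from \eqref{eqn:boundphi} yields one of the iterated limits in \eqref{eqn:lemma:kerncoml1biseq2}. For the reversed order I would fix $t$ and send $|y|\to\infty$ first: then $2(1-\Phi^0(y))\to0$ and, since $\hole$ is bounded, $\mathrm{dist}(y,\hole)\to\infty$ forces $\int_\hole k_\RN(x,y,t)\,dx\le|\hole|(4\pi t)^{-N/2}e^{-\mathrm{dist}(y,\hole)^2/4t}\to0$, so the inner limsup is $0$ for every $t$ and hence the outer one vanishes as well.
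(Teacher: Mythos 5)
Your proof is correct. The paper does not reproduce the argument for this lemma (it is quoted from Part I, \cite{DdTRB24a}, Lemma 3.2), but your derivation --- splitting $\abs{k^\theta-k_\RN}$ into its signed part, controlled by mass conservation in $\RN$ and the contraction bound $S^\theta(t)1_\Omega\leq 1$, plus twice the negative part, controlled by $k^0\leq k^\theta$ together with the domain monotonicity $k^0\leq k_\RN$ and the monotone limit $S^0(t)1_\Omega(y)\geq \Phi^0(y)$ --- reproduces exactly the stated bound $2(1-\Phi^0(y))+\int_\hole k_\RN(x,y,t)\,dx$, and you correctly isolated the one ingredient not quoted in this paper (the comparison of the exterior Dirichlet kernel with the whole-space kernel) and justified it by the parabolic maximum principle with the Gaussian bounds \eqref{eqn:gyryabound} handling the behaviour at infinity.
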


\section{Bounded  very weak solutions}
\label{sec:bounded-very-weak_solutions}

In this section we give a suitable (very) weak meaning to bounded  solutions of
non homogeneous problems, see \eqref{eqn:defveryweakeq0} below, as
well as suitable comparison results for such solutions. These
comparison results will be a main tool for the main results in Section
\ref{sec:initial_data_Linfty}. Notice that  $\partial^D \Omega$,
$\partial^R\Omega$ and $\partial^N \Omega$ below are as defined in Section
\ref{sec:preliminares}.

We start with the following definition, which stems from formally
multiplying \eqref{eqn:defveryweakeq0} by $\varphi$ and integrating by
parts in $\Omega \times (0,T)$, see \eqref{eqn:weakfrompeq2} and
\eqref{eqn:weakfrompeq3} for the computation regarding boundary
terms.

\begin{definition}
\label{def:veryweak}

Let $\Omega\subset \RN$ be an exterior domain and $u_{0}\in
L^{\infty}(\Omega)$, $f\in L^1((0,T),L^\infty(\Omega))$, $g\in
L^1((0,T),L^\infty(\partial \Omega))$.

We say that $u\in L^\infty(\Omega \times(0,T))$ is a very weak solution of
	\begin{equation}
		\label{eqn:defveryweakeq0}
		\left\{
		\begin{aligned}
			u_t-\Lap u & = f  \qquad && \text{in} \ \Omega\times(0,T) \\
			B_\theta(u) & = g \qquad && \text{on} \ \partial \Omega \times (0,T) \\
			u(0) & = u_0 \qquad  && \text{in} \ \Omega 
		\end{aligned}
		\right.
\end{equation}
if  for any $0\leq \varphi\in C^{2,1}(\adh{\Omega}\times[0,T])$ such
that $B_\theta(\varphi)=0$, $\varphi(T)\equiv 0$ and 
$$\abs{\varphi(x,t)}+\abs{\nabla \varphi(x,t)}+\abs{\Lap \varphi(x,t)}+ \abs{\varphi_t(x,t)}
\leq C e^{-c  \abs{x}^2} \qquad \forall x\in \Omega, \ \forall t\in[0,T],$$
for some $C,c>0$, we have 
\begin{equation}
\label{eqn:defveryweak}
-\int_0^T\int_\Omega u\left(\varphi_t+\Lap \varphi\right)
= \int_0^T\int_\Omega f\varphi  + \int_\Omega
u_0\varphi(0) +\int_0^T\int_{\partial^{R,N}
  \Omega}\frac{g\varphi}{\sin\left(\frac{\pi}{2}\theta\right)} -
\int_0^T\int_{\partial^D \Omega}g\frac{\partial \varphi}{\partial n} 
\end{equation}
where we have denoted $\partial^{R,N}\Omega \defeq \partial^R\Omega\cup \partial^N \Omega=\{x\in\Omega:0<\theta(x)\leq 1\}$.

        Furthermore, we will say that $u$ is a very weak supersolution
(respectively subsolution) of \eqref{eqn:defveryweakeq0} if it
satisfies \eqref{eqn:defveryweak} as an inequality with $\geq$
(respectively $\leq$). 

\end{definition}

Clearly, a very weak solution of \eqref{eqn:defveryweakeq0} is both a
very weak super and a very weak sub solution. 

For our main result below we will  need the following lemma on smooth
solutions of the heat equation with homogenous boundary data. 

\begin{lemma}
\label{lemma:varphi}

Let $0\leq h\in C^\infty_c(\Omega\times(0,T))$ and consider the solution of the heat equation
\begin{equation}
  \label{eq:heat_no_homogeneous}
\left\{
\begin{aligned}
\varphi_t-\Lap \varphi & = h \qquad && \text{in} \ \Omega\times(0,T) \\
B_\theta(\varphi) & = 0 \qquad && \text{on} \ \partial \Omega \times (0,T) \\
\varphi(0) & = 0 \qquad  && \text{in} \ \Omega.
\end{aligned}
\right.
\end{equation}
Then, $ \varphi\in C^{2,1}(\adh{\Omega}\times[0,T])$, $\varphi\geq 0$ and there exist $C,c>0$ depending on $T$ and $h$ such that
\begin{equation}
	\label{eqn:lemmavarphieq1}
\abs{\varphi(x,t)} + \abs{\nabla \varphi(x,t)} + \abs{\Lap
  \varphi(x,t)}+\abs{\varphi_t(x,t)} \leq C e^{-c\abs{x}^2} \qquad \mbox{ $\forall x\in\Omega$,  $t\in (0,T)$}.
\end{equation}
\end{lemma}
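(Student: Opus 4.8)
The plan is to represent $\varphi$ through the variation of constants (Duhamel) formula
\begin{equation*}
\varphi(x,t) = \int_0^t S^\theta(t-s)h(\cdot,s)(x)\,ds = \int_0^t\int_\Omega k^\theta(x,y,t-s)\,h(y,s)\,dy\,ds ,
\end{equation*}
so that positivity is immediate: since $h\geq 0$ and $k^\theta>0$ by \eqref{eqn:gyryabound}, we get $\varphi\geq 0$. The regularity $\varphi\in C^{2,1}(\adh{\Omega}\times[0,T])$ I would obtain from standard parabolic Schauder theory on the $C^{2,\alpha}$ domain $\Omega$ with the $C^{1,\alpha}$ boundary operator $B_\theta$: because $h\in C^\infty_c(\Omega\times(0,T))$ is smooth and the zeroth and first order compatibility conditions at $t=0$ hold trivially (both $\varphi(\cdot,0)$ and $h(\cdot,0)$ vanish, and $B_\theta(0)=0$), the inhomogeneous problem has a classical solution up to the boundary; alternatively one inherits the interior regularity of Theorem \ref{thm:prop2} through the Duhamel integral.

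A useful simplification is that, since $\mathrm{supp}_t\,h\subset[t_0,t_1]\subset(0,T)$, the solution vanishes identically for $t\leq t_0$, so it suffices to establish the bound \eqref{eqn:lemmavarphieq1} for $t\in[t_0,T)$, where $t$ is bounded away from $0$. For the Gaussian decay of $\varphi$ itself, I would use the Gaussian upper bound \eqref{eqn:gyryabound} together with the fact that $\mathrm{supp}_y\,h$ lies in a fixed ball $B(0,R)$, so that $|x-y|\geq |x|/2$ once $|x|\geq 2R$. Writing $\tau=t-s\in(0,T]$ and estimating $e^{-|x-y|^2/4c\tau}\leq e^{-|x|^2/16c\tau}=e^{-|x|^2/32c\tau}\,e^{-|x|^2/32c\tau}\leq e^{-|x|^2/32c\tau}\,e^{-|x|^2/32cT}$, the first factor against $\tau^{-N/2}$ is bounded uniformly in $\tau$ by $C|x|^{-N}$ (maximizing $\tau^{-N/2}e^{-a/\tau}$), while the second factor supplies the desired $e^{-c'|x|^2}$ with $c'=1/32cT$; integrating in $(y,s)$ over the compact support of $h$ then yields $|\varphi(x,t)|\leq C e^{-c'|x|^2}$ for $|x|$ large, and for bounded $|x|$ the estimate is trivial after enlarging $C$.

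For the time derivative I would note that, after an integration by parts in $s$ at the semigroup level (using $h(\cdot,0)=0$ and analyticity), $\varphi_t$ solves the same Duhamel formula with $h$ replaced by $\partial_s h\in C^\infty_c(\Omega\times(0,T))$, namely
\begin{equation*}
\varphi_t(x,t)=\int_0^t\int_\Omega k^\theta(x,y,t-s)\,\partial_s h(y,s)\,dy\,ds ,
\end{equation*}
so that $|\varphi_t|\leq\int_0^t\int_\Omega k^\theta(x,y,t-s)\,|\partial_s h(y,s)|\,dy\,ds$ and the estimate of the previous paragraph applies verbatim. Then $\Lap\varphi=\varphi_t-h$ inherits the bound, since $h$, being compactly supported in $x$, trivially satisfies a Gaussian bound. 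Finally, for $\nabla\varphi$ I would invoke interior parabolic gradient estimates: for $|x|$ large the ball $B(x,2r)$ lies in $\Omega$ and avoids $\mathrm{supp}\,h$, so $\varphi$ solves the homogeneous heat equation on the cylinder $B(x,2r)\times(t-4r^2,t)$ with $r$ fixed small enough (depending on $t_0$) that $t-4r^2>0$ for $t\geq t_0$, whence $|\nabla\varphi(x,t)|\leq (C/r)\sup|\varphi|$ over that cylinder, and the supremum is controlled by $Ce^{-c'(|x|-2r)^2}\leq C'e^{-c''|x|^2}$ using the bound on $\varphi$. I expect the main obstacle to be the Gaussian estimate of the second paragraph, namely carefully taming the short-time singularity $\tau^{-N/2}$ of the kernel as $s\to t$ while extracting a clean $e^{-c|x|^2}$ factor and keeping the $(y,s)$ integral finite; once that is secured, both the reduction of $\varphi_t$ to the same computation and the interior estimate for $\nabla\varphi$ are routine.
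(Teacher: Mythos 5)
Your proof is correct, but it handles the two delicate points by different means than the paper, so a comparison is worth recording. The only real difficulty is the bound on $\Lap\varphi$ (equivalently $\varphi_t$), since $\norm{\Lap S^\theta(\tau)}$ behaves like $\tau^{-1}$ and the naive Duhamel estimate diverges as $s\to t$. The paper spends the time-regularity of $h$ through Henry's identity $(-\Lap)\varphi(t)=h(t)-S^\theta(t)h(t)+\int_0^t(-\Lap)S^\theta(t-s)\left(h(s)-h(t)\right)ds$, using $h(s)-h(t)=(s-t)g(s)$ to cancel the $(t-s)^{-1}$ singularity; you spend it instead by moving the derivative onto $h$, writing $\varphi_t(t)=\int_0^tS^\theta(t-s)\partial_sh(s)\,ds$ (valid since $h(\cdot,0)=0$; it follows most cleanly by substituting $\sigma=t-s$ and differentiating under the integral, rather than by a genuine integration by parts) and then reading off $\Lap\varphi=\varphi_t-h$. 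These are essentially dual tricks and both are sound. For $\nabla\varphi$ the paper uses Gaussian bounds on the spatial derivatives of the kernel, $\abs{D^\beta k^\theta(x,y,t)}\leq Ct^{-(N+\abs{\beta})/2}e^{-c\abs{x-y}^2/t}$, from Mora's estimates, whereas you use only the zeroth-order bound \eqref{eqn:gyryabound} together with interior parabolic gradient estimates on cylinders disjoint from $\mathrm{supp}\,h$; your route is more self-contained (no derivative bounds on $k^\theta$ are needed) at the price of splitting the analysis between large $\abs{x}$ and a neighbourhood of the hole. Two points you should make explicit to close the argument: near $\partial\Omega$ the bounds on $\nabla\varphi$, $\Lap\varphi$ and $\varphi_t$ must come from the global $C^{2,1}(\adh{\Omega}\times[0,T])$ regularity (your Schauder/compatibility appeal), since the interior cylinders are unavailable there; and the observation that $\varphi\equiv 0$ for $t\leq t_0$ is what allows cylinders of fixed parabolic height for all relevant $t$.
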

\begin{proof}
  {\bf Step 1.}
We first derive some estimates on  the solutions of the homogeneous
problems.   For this, given $0\leq \psi\in C^\infty_c(\Omega)$ 
we prove  that there exist constants $C,c>0$ depending on the support
of $\psi$, $T$ and $\norm{\psi}_{L^\infty(\Omega)}$ such that for all
$x \in \overline{\Omega}$ 
 and $t\in (0,T)$ 
\begin{equation}
\label{eqn:varphieq1}
\abs{S^\theta(t)\psi(x)}\leq Ce^{-c\abs{x}^2}, \qquad
                \abs{\nabla S^\theta(t)\psi(x)}\leq
                C\frac{e^{-c\abs{x}^2}}{\sqrt{t}} \qquad 	\abs{\Lap S^\theta(t) \psi(x)}\leq C\frac{
          e^{-c\abs{x}^2}}{t} . 
\end{equation}
This can be obtained from the Gaussian estimates of the heat kernels
for short times. For example, from \cite[Theorem
2.2]{mora1983semilinear} we have $\abs{D^\beta k^\theta(x,y,t)}\leq
Ct^{-\frac{N+\abs{\beta}}{2}}e^{-c\frac{\abs{x-y}^2}{t}}$, when $t\in
(0,T)$ and $x,y\in\Omega$, where $D^\beta$ is a spatial derivative of
order $\abs{\beta}\leq 2$ Then, for small $\abs{x}$, we have, by
using Proposition \ref{prop:convgauss}, 
\begin{equation}
	\abs{D^\beta S^\theta(t)\psi(x)}\leq
        \frac{C}{t^{\abs{\beta}}}\abs{\int_\Omega
          t^{-\frac{N}{2}}e^{-c\frac{\abs{x-y}^2}{t}}\psi(y)dy} \leq
        \frac{C\norm{\psi}_{L^\infty(\Omega)}}{t^{\abs{\beta}}} . 
\end{equation}

Now, take  $D>0$ such that
$\text{supp}(\psi)\subset B(0,D/2)$. Then, for any $\abs{x}\geq D$ and
$\abs{y}\leq D/2$ we have $\abs{x-y}^2\geq \abs{x}^2/4$. Hence, for
$\abs{x}\geq D$ 
\begin{equation}
	\abs{D^\beta S^\theta(t)\psi(x)}\leq \frac{C}{t^{\abs{\beta}}}\abs{\int_\Omega t^{-\frac{N}{2}}e^{-c\frac{\abs{x-y}^2}{t}}\psi(y)dy} \leq \frac{Ce^{-\frac{\abs{x}^2}{8t}}}{t^{\abs{\beta}}}\abs{\int_\Omega t^{-\frac{N}{2}}e^{-c\frac{\abs{x-y}^2}{2t}}\psi(y)dy}\leq \frac{Ce^{-\frac{\abs{x}^2}{8T}}\norm{\psi}_{L^\infty(\Omega)}}{t^{\abs{\beta}}}
\end{equation}

\medskip 
\noindent {\bf Step 2.}
Now, we obtain the estimates for
$\varphi$ in \eqref{eq:heat_no_homogeneous}. By the variation of constants formula
\begin{equation}
	\varphi(x,t) = \int_0^t (S^\theta(t-s)h(s))(x)ds, \qquad x \in
        \Omega, \ t\in (0,T).
\end{equation}
 
 As $h\geq 0$ and $S^\theta(t)$ preserves the order, then $\varphi\geq
 0$. Now, from Step 1, we can find $C,c>0$ depending on
 $\norm{h}_{L^\infty(\Omega\times(0,T))}$, the support of $h$ and $T$ such that,
\begin{equation}
	\abs{\varphi(x,t)}\leq \int_0^t C e^{-c\abs{x}^2}ds \leq Ce^{-c\abs{x}^2} \qquad \forall x\in\adh{\Omega}, \qquad t\in (0,T)
\end{equation}
and
\begin{equation}
	\abs{\nabla \varphi(x,t)}\leq \int_0^t C \frac{e^{-c\abs{x}^2}ds}{\sqrt{t-s}} \leq Ce^{-c\abs{x}^2} \qquad \forall x\in\adh{\Omega}, \qquad t\in (0,T).
\end{equation}

For  the term $\Lap \varphi$, we can use the expression from \cite[Lemma 3.2.1]{henry},
\begin{equation}
	(-\Lap)\varphi(t) = h(t)-S^\theta(t)h(t)+\int_0^t  (-\Lap) S^\theta(t-s)(h(s)-h(t))ds
\end{equation}
so, using the smoothness of $h$, we obtain $h(s)-h(t)=(s-t)g(s)$
where $g\in C^\infty_c(\Omega\times[0,T])$  and $\norm{g}_{L^\infty(\Omega\times(0,T))}\leq
2\norm{h}_{C^{1}(\Omega\times(0,T))}$. Therefore, for $x\in\Omega$ and $t\in (0,T)$, using \eqref{eqn:varphieq1},
\begin{equation}
\abs{-\Lap \varphi(x,t)} \leq Ce^{-c\abs{x}^2}+\int_0^t C e^{-c\abs{x}^2}ds
 \leq Ce^{-c\abs{x}^2} 
\end{equation}
where $C,c>0$ depend on $T$, the support of $h$ and
$\norm{h}_{C^1(\adh{\Omega}\times (0,T))}$. The bound on $\varphi_t$
is immediately obtained from the equation and the bound on $\Lap
\varphi$. As the bounds on the derivatives are independent of $t$, we
have $\varphi\in C^{2,1}(\adh{\Omega}\times[0,T])$. 
\end{proof}

Now we present the comparison result for very weak solutions of
\eqref{eqn:defveryweakeq0}. 

\begin{theorem}
\label{thm:weakcomp}

Assume $u_{0}\in
L^{\infty}(\Omega)$, $f\in L^1((0,T),L^\infty(\Omega))$, $g\in
L^1((0,T),L^\infty(\partial \Omega))$. 

If $u_{0} \leq 0$, $f\leq 0$ and $g\leq 0$ and  $u\in
L^\infty(\Omega\times[0,T))$  is a very weak subsolution of
\eqref{eqn:defveryweakeq0},  then 
\begin{equation}
	u\leq 0 \ \text{a.e. in} \ \Omega\times[0,T).
\end{equation}
\end{theorem}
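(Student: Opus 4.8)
The plan is to argue by duality: it suffices to show that $\int_0^T\int_\Omega u\,h \leq 0$ for every $0\leq h\in C^\infty_c(\Omega\times(0,T))$, since then, $h\geq 0$ being otherwise arbitrary, we conclude $u\leq 0$ a.e. in $\Omega\times[0,T)$. To exploit the subsolution inequality \eqref{eqn:defveryweak}, I need, for each such $h$, an admissible nonnegative test function $\varphi$ with $\varphi_t+\Lap\varphi=-h$. The operator acting on $\varphi$ in \eqref{eqn:defveryweak} is the backward heat operator, so I would produce $\varphi$ from the forward problem of Lemma \ref{lemma:varphi} by time reversal.

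Concretely, set $\hat h(x,t):=h(x,T-t)$, which again lies in $C^\infty_c(\Omega\times(0,T))$ and is nonnegative, and let $w$ be the solution furnished by Lemma \ref{lemma:varphi} with source $\hat h$, so that $w_t-\Lap w=\hat h$, $B_\theta(w)=0$, $w(0)=0$, with $w\geq 0$, $w\in C^{2,1}(\overline{\Omega}\times[0,T])$ and the Gaussian bound \eqref{eqn:lemmavarphieq1}. Define $\varphi(x,t):=w(x,T-t)$. A direct computation gives $\varphi_t+\Lap\varphi=-(w_t-\Lap w)(x,T-t)=-h$ and $\varphi(T)=w(0)=0$. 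Since the boundary condition is autonomous, $B_\theta(\varphi)=0$ for all $t$; moreover $\varphi\geq 0$, $\varphi\in C^{2,1}(\overline{\Omega}\times[0,T])$, and the spatial Gaussian bound, being uniform in time, is preserved under $t\mapsto T-t$. Hence $\varphi$ is an admissible test function in Definition \ref{def:veryweak}.

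Plugging this $\varphi$ into the subsolution inequality \eqref{eqn:defveryweak} yields
\begin{equation}
\int_0^T\int_\Omega u\,h = -\int_0^T\int_\Omega u(\varphi_t+\Lap\varphi) \leq \int_0^T\int_\Omega f\varphi + \int_\Omega u_0\varphi(0) + \int_0^T\int_{\partial^{R,N}\Omega}\frac{g\varphi}{\sin\left(\frac{\pi}{2}\theta\right)} - \int_0^T\int_{\partial^D\Omega}g\frac{\partial\varphi}{\partial n}.
\end{equation}
I would then check that each term on the right is $\leq 0$. The first two are nonpositive because $f\leq 0$, $u_0\leq 0$ and $\varphi\geq 0$. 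On $\partial^{R,N}\Omega$ one has $\sin\left(\frac{\pi}{2}\theta\right)>0$, so $g\leq 0$ and $\varphi\geq 0$ make that integrand nonpositive. The last term is the delicate one: on $\partial^D\Omega$ we have $\theta=0$, hence $B_\theta(\varphi)=\varphi=0$ there, while $\varphi\geq 0$ throughout $\Omega$; since $\varphi$ attains its minimum value $0$ along the Dirichlet boundary, its outward normal derivative satisfies $\frac{\partial\varphi}{\partial n}\leq 0$, so $g\frac{\partial\varphi}{\partial n}\geq 0$ and the term $-\int_0^T\int_{\partial^D\Omega}g\frac{\partial\varphi}{\partial n}$ is nonpositive.

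Combining these, $\int_0^T\int_\Omega u\,h\leq 0$ for all admissible $h\geq 0$, and the arbitrariness of $h$ forces $u\leq 0$ a.e. in $\Omega\times[0,T)$. The main obstacle I anticipate is the rigorous justification of the sign of the Dirichlet normal derivative (which relies on $\varphi\geq 0$ in $\Omega$, $\varphi=0$ on $\partial^D\Omega$, and $C^{2,1}$ regularity up to the boundary) and, secondarily, the bookkeeping verifying that the time-reversed function genuinely satisfies every technical hypothesis in Definition \ref{def:veryweak}, namely the regularity up to $\partial\Omega$ and the Gaussian decay required of a legitimate test function.
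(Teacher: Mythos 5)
Your proposal is correct and follows essentially the same route as the paper: time-reversing the source, invoking Lemma \ref{lemma:varphi} to produce a nonnegative admissible test function solving the backward equation, and checking the sign of each term on the right of \eqref{eqn:defveryweak}, including the observation that $\frac{\partial\varphi}{\partial n}\leq 0$ on $\partial^D\Omega$ since $\varphi\geq 0$ vanishes there. The only cosmetic difference is that the paper verifies the nonpositivity of the right-hand side for an arbitrary admissible $\varphi$ first and then constructs the specific one, whereas you do it in the reverse order.
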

\begin{proof}
  Observe that from the assumption on the data, for any $\varphi$ as
  in Definition \ref{def:veryweak} we have  in \eqref{eqn:defveryweak} 
\begin{equation}
\label{eqn:weakcompeq1}
  -\int_0^T\int_\Omega u\left(\varphi_t+\Lap \varphi\right)\leq 0. 
\end{equation}
For this note that in the Dirichlet part of the boundary,
$\partial^{D}\Omega$,  $\varphi=0$ and since $\varphi \geq 0$ in
$\overline{\Omega} \times [0,T]$ then $\frac{\partial
  \varphi}{\partial n} \leq 0$ on $\partial^{D}\Omega$ in
\eqref{eqn:defveryweak} while $\sin(\frac{\pi}{2}\theta) \geq 0$ on
$\partial^{R,N}\Omega$. 

Now let  $0\leq h\in C^\infty_c(\Omega\times(0,T))$. Define
$\tilde{h}(x,t)\defeq h(x,T-t)$, and consider the function
$\tilde{\varphi}\geq 0$ associated to $\tilde{h}$ from Lemma
\ref{lemma:varphi}. Then, define
$\varphi(x,t)=\tilde{\varphi}(x,T-t)$. Hence, $\varphi\geq 0$
satisfies the backwards heat equation 
\begin{equation}
\left\{
\begin{aligned}
\varphi_t+\Lap \varphi & = -h \qquad && \text{in} \ \Omega\times(0,T) \\
B_\theta(\varphi) & = 0 \qquad && \text{on} \ \partial \Omega \times (0,T) \\
\varphi(T) & = 0 \qquad  && \text{in} \ \Omega
\end{aligned}
\right.
\end{equation}
as well as the conditions of $\varphi$ in Definition
\ref{def:veryweak}, from Lemma \ref{lemma:varphi}. Hence, we can use
it as a test function in \eqref{eqn:weakcompeq1} and then 
$\int_0^T\int_\Omega uh\leq 0$. As $0\leq h\in C^\infty_c(\Omega\times(0,T))$ was arbitrary, we obtain
$u\leq 0$. 
\end{proof}

We immediately get the following corollary.

\begin{corollary}
\label{cor:uniqueness_veryweak_sltns}

\begin{enumerate}
  \item
    Assume $0\leq u_{0}\in L^{\infty}(\Omega)$,
    $0 \leq f\in L^1((0,T),L^\infty(\Omega))$,
    $0\leq g\in L^1((0,T),L^\infty(\partial \Omega))$.

    If $u\in L^\infty(\Omega\times[0,T))$ is a very weak
    subsolution of \eqref{eqn:defveryweakeq0}, then
    \begin{equation}
      u\geq 0 \ \text{in} \ \Omega\times[0,T).
    \end{equation}

  \item
For  $u_{0}\in
L^{\infty}(\Omega)$, $f\in L^1((0,T),L^\infty(\Omega))$ and  $g\in
L^1((0,T),L^\infty(\partial \Omega))$ there exists at most a very weak
solution $u\in L^\infty(\Omega \times[0,T))$ of \eqref{eqn:defveryweakeq0} as in Definition
\ref{def:veryweak}. 

  \end{enumerate}
\end{corollary}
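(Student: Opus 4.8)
The plan is to derive both parts directly from the comparison Theorem~\ref{thm:weakcomp}, exploiting the linearity of the defining identity \eqref{eqn:defveryweak} in the quadruple $(u,u_0,f,g)$. The single structural observation I would isolate first is that multiplying \eqref{eqn:defveryweak} by $-1$ reverses the inequality sign and negates all of $u_0$, $f$ and $g$ at once. Thus, if $u$ satisfies \eqref{eqn:defveryweak} as a $\geq$-inequality (a \emph{supersolution}) with data $(u_0,f,g)$, then $-u$ satisfies it as a $\leq$-inequality (a \emph{subsolution}) with data $(-u_0,-f,-g)$, and vice versa; a genuine solution (equality) is simultaneously a sub- and a supersolution with the same data. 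This manipulation touches only $u$ and the data, so the sign constraints on the admissible test functions $\varphi$ used inside the proof of Theorem~\ref{thm:weakcomp} (namely $\varphi\geq 0$, $\varphi=0$ on $\partial^D\Omega$ forcing $\frac{\partial\varphi}{\partial n}\leq 0$, and $\sin(\frac{\pi}{2}\theta)\geq 0$ on $\partial^{R,N}\Omega$) are left entirely unchanged.

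For (i), I would set $v\defeq -u$. With $u_0,f,g\geq 0$ the negated data $-u_0,-f,-g$ are $\leq 0$, and by the observation above $v$ is a very weak subsolution of \eqref{eqn:defveryweakeq0} for these nonpositive data. Theorem~\ref{thm:weakcomp} then yields $v\leq 0$ a.e. in $\Omega\times[0,T)$, that is $u\geq 0$. (Here one uses that $u$ satisfies \eqref{eqn:defveryweak} with the $\geq$ inequality, i.e. is a very weak \emph{super}solution, so that $-u$ is genuinely a subsolution to which Theorem~\ref{thm:weakcomp} applies; a mere subsolution with nonnegative data need not be nonnegative, so this is the reading that produces the stated conclusion.)

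For (ii), I would pass to the difference. If $u_1,u_2\in L^\infty(\Omega\times[0,T))$ are two very weak solutions of \eqref{eqn:defveryweakeq0} for the same $(u_0,f,g)$, then $w\defeq u_1-u_2$ satisfies \eqref{eqn:defveryweak} with equality for the zero data, hence is at once a very weak sub- and a very weak supersolution with zero data. Applying Theorem~\ref{thm:weakcomp} to $w$ (whose data $0$ satisfies $\leq 0$) gives $w\leq 0$, and applying it to $-w$ (again zero data, now a subsolution) gives $-w\leq 0$, so $w\geq 0$; therefore $w=0$ a.e. and $u_1=u_2$.

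I do not expect a genuine obstacle: all of the analytic work is already carried out in Lemma~\ref{lemma:varphi} and Theorem~\ref{thm:weakcomp}, and the corollary is pure linear bookkeeping. The only point demanding care is the tracking of signs under $u\mapsto -u$, together with the verification — immediate from the first paragraph — that the test-function requirements of Definition~\ref{def:veryweak} are blind to the sign of the data, so that Theorem~\ref{thm:weakcomp} applies verbatim to $v$ in part (i) and to both $w$ and $-w$ in part (ii).
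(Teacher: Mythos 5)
Your proof is correct and is exactly the argument the paper intends (it states the corollary as an immediate consequence of Theorem~\ref{thm:weakcomp} and gives no written proof): negate to convert supersolutions with nonnegative data into subsolutions with nonpositive data for (i), and apply the comparison to $\pm(u_1-u_2)$ for (ii). You are also right that the word ``subsolution'' in part (i) must be read as ``supersolution'' for the statement to be true, and your handling of that point is the correct one.
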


The next result  states that suitable  classical
solutions,  are very weak solutions \eqref{eqn:defveryweakeq0}.

\begin{theorem}
	\label{thm:weakfromp}
	Let $u\in C^1((0,T],L^\infty(\Omega))\cap
        C^{2,1}(\Omega\times(0,T])\cap C^{1}(\adh{\Omega}\times(0,T))\cap
        L^\infty(\Omega\times(0,T))$ be such that
	\begin{equation}
	\left\{
	\begin{aligned}
		u_t-\Lap u & = f  \qquad && \text{in} \  \Omega\times(0,T] \\
		B_\theta(u) & = g \qquad && \text{on} \ \partial \Omega \times (0,T]
	\end{aligned}
	\right.
	\end{equation}	
for some $f\in L^1((0,T),L^\infty(\Omega))$, $g\in
L^1((0,T),L^\infty(\partial \Omega))$ and  for some $u_0\in
L^\infty(\Omega)$, we have, as $t\to 0^+$, 
\begin{equation}
	u(t)\mywto{*}u_0 \qquad   \text{weak-* } L^\infty(\Omega) 
\end{equation}
and $u(t), \abs{\nabla u(t)}, \Lap u(t) \in L^{\infty}(\Omega)$ for
every $t>0$.

Then, $u$ satisfies in the very weak sense,
\begin{equation}
  \label{eq:full_nonhomogeneous}
	\left\{
	\begin{aligned}
		u_t-\Lap u & = f  \qquad && \text{in} \ \Omega\times(0,T] \\
		B_\theta(u) & = g \qquad && \text{on} \ \partial \Omega \times (0,T] \\
		u(0) & = u_0 \qquad  && \text{in} \ \Omega .
	\end{aligned}
	\right.
\end{equation}
\end{theorem}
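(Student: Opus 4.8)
The plan is to recover the identity \eqref{eqn:defveryweak} directly from the classical equation by multiplying by an admissible test function and integrating by parts, once in time and once in space. Fix $\varphi$ as in Definition \ref{def:veryweak}, i.e. $0\leq\varphi\in C^{2,1}(\adh{\Omega}\times[0,T])$ with $B_\theta(\varphi)=0$, $\varphi(T)\equiv 0$, and the Gaussian bounds on $\varphi,\nabla\varphi,\Lap\varphi,\varphi_t$. Because near $t=0$ we only have weak-* convergence of $u(t)$ to $u_0$, I would first work on $\Omega\times(\eps,T)$ for small $\eps>0$: multiplying $u_t-\Lap u=f$ by $\varphi$ and integrating the time derivative over $(\eps,T)$ gives, using $\varphi(T)\equiv 0$,
$$\int_\eps^T\!\!\int_\Omega u_t\varphi = -\int_\Omega u(\eps)\varphi(\eps)-\int_\eps^T\!\!\int_\Omega u\varphi_t.$$
Since $u\in L^\infty(\Omega\times(0,T))$ and $\varphi(\eps)\to\varphi(0)$ in $L^1(\Omega)$ (integrability being guaranteed by the Gaussian decay), while $u(\eps)\mywto{*}u_0$, the boundary term converges to $\int_\Omega u_0\varphi(0)$; letting $\eps\to 0$ thus produces the temporal contributions $-\int_0^T\int_\Omega u\varphi_t$ and $-\int_\Omega u_0\varphi(0)$, balanced against $\int_0^T\int_\Omega f\varphi$.

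For the spatial part I would apply Green's second identity to the difference $\int_\Omega\Lap u\,\varphi-\int_\Omega u\,\Lap\varphi$. Since $\Omega$ is unbounded I would carry this out on the truncated domains $\Omega_R=\Omega\cap B(0,R)$ and let $R\to\infty$; the artificial boundary terms on $\{\abs{x}=R\}$ pair $\varphi$ and $\nabla\varphi$, which decay like $e^{-c\abs{x}^2}$, against the bounded quantities $u$ and $\nabla u$, so they vanish in the limit. This is legitimate here because $u\in C^1(\adh{\Omega}\times(0,T))$ and $\Lap u\in L^\infty(\Omega)$, and yields $\int_0^T\int_\Omega\Lap u\,\varphi = \int_0^T\int_\Omega u\,\Lap\varphi + \int_0^T\int_{\partial\Omega}\big(\varphi\tfrac{\partial u}{\partial n}-u\tfrac{\partial\varphi}{\partial n}\big)$.

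It then remains to convert the boundary integral into the two terms of \eqref{eqn:defveryweak} using $B_\theta(u)=g$ and $B_\theta(\varphi)=0$. On the Dirichlet part $\partial^D\Omega$ one has $\theta=0$, so $\varphi=0$ and $u=g$, and only $-\int_{\partial^D\Omega}g\tfrac{\partial\varphi}{\partial n}$ survives. On $\partial^{R,N}\Omega$ one has $\sin(\tfrac{\pi}{2}\theta)>0$; solving $B_\theta(\varphi)=0$ for $\tfrac{\partial\varphi}{\partial n}$ and $B_\theta(u)=g$ for $\tfrac{\partial u}{\partial n}$, the two contributions proportional to $\cos(\tfrac{\pi}{2}\theta)u\varphi/\sin(\tfrac{\pi}{2}\theta)$ cancel and one is left precisely with $\int_{\partial^{R,N}\Omega}\tfrac{g\varphi}{\sin(\frac{\pi}{2}\theta)}$. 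Collecting all pieces reproduces \eqref{eqn:defveryweak}.

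The two integrations by parts are routine; the step deserving care is the passage $\eps\to 0$ in the temporal boundary term, where the strong $L^1$ convergence of $\varphi(\eps)$ must be combined with the merely weak-* convergence of $u(\eps)$. I would handle this by splitting $\int_\Omega u(\eps)\varphi(\eps)=\int_\Omega u(\eps)(\varphi(\eps)-\varphi(0))+\int_\Omega u(\eps)\varphi(0)$, bounding the first summand by $\norm{u}_{L^\infty(\Omega\times(0,T))}\norm{\varphi(\eps)-\varphi(0)}_{L^1(\Omega)}\to 0$ and using the weak-* convergence on the second. The second, more technical point is the justification of the Green identity on the unbounded domain, for which the Gaussian control on $\varphi$ is exactly what is needed to annihilate the far-field boundary terms as $R\to\infty$.
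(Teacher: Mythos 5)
Your proposal is correct and follows essentially the same route as the paper: integration by parts in time on $(\eps,T)$ using $\varphi(T)\equiv 0$, Green's identity on the truncated domains $\Omega_R$ with the far-field terms killed by the Gaussian decay of $\varphi$ and $\nabla\varphi$ against the bounded $u$, $\nabla u$, the same boundary-condition bookkeeping on $\partial^D\Omega$ and $\partial^{R,N}\Omega$, and finally $\eps\to 0$ via the weak-* convergence $u(\eps)\mywto{*}u_0$. Your explicit splitting of $\int_\Omega u(\eps)\varphi(\eps)$ to combine strong $L^1$ convergence of $\varphi(\eps)$ with the merely weak-* convergence of $u(\eps)$ is a slightly more detailed justification of a step the paper passes over quickly.
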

\begin{proof}
  Let $\varphi$ as in Definition \ref{def:veryweak}. As $\varphi\in
C^{2,1}(\adh{\Omega}\times [0,T])$ and $\varphi$ and $\varphi_t$ decay exponentially
in space uniformly in $t\in[0,T]$, we have $\varphi\in
C([0,T],L^\infty(\Omega)) \cap C^1([0,T], L^1(\Omega))$. 
 In addition, as $u\in C^1((0,T], L^\infty(\Omega))$, we obtain that $t\mapsto \int_\Omega u(t)\varphi(t)$ belongs to
$C^1((0,T])$. Therefore, for any $\varepsilon>0$, since $\varphi(T)\equiv 0$,
	\begin{equation}
		\label{eqn:weakfrompeq1}
	 -\int_\varepsilon^T \int_\Omega u\varphi_t=-\int_\varepsilon^T \left(\frac{d}{dt}\int_\Omega u\varphi -\int_\Omega u_t\varphi\right)=\int_\Omega u(\varepsilon)\varphi(\varepsilon)+\int_\varepsilon^T \int_\Omega u_t\varphi.
	\end{equation}
Notice that we need to introduce $\varepsilon>0$ because
$\norm{u_t(t)}_{L^\infty(\Omega)}$ is not assumed to be integrable up
to $t=0$.

Now, let $R>0$ large and consider $\Omega_R = \Omega \cap
B(0,R)$. Then, for any $t>0$, using the regularity of $u(t)$ and
$\varphi(t)$ we obtain
\begin{equation}
	\label{eqn:weakfrompeqpre2}
	\int_{\Omega_R} u\Lap \varphi=\int_{\Omega_R} \Lap u
        \varphi+\int_{\partial \Omega} \left(u\frac{\partial
            \varphi}{\partial n}-\frac{\partial u}{\partial
            n}\varphi\right) + \int_{\partial B(0,R)}
        \left(u\frac{\partial \varphi}{\partial n}-\frac{\partial
            u}{\partial n}\varphi\right). 
\end{equation}
But, as $u(t),\nabla u(t) \in L^\infty(\Omega)$ and  $\varphi(t),
\nabla \varphi(t)$ decay exponentially in space at infinity, then we
have that $\int_{\partial
  B(0,R)} \left(u\frac{\partial \varphi}{\partial n}-\frac{\partial
    u}{\partial n}\varphi\right)\to 0$ when $R\to \infty$. Therefore, as
$u(t),\Lap u(t)\in L^\infty(\Omega)$, and $\varphi(t),\Lap\varphi(t)\in
L^1(\Omega)$,  we obtain 
\begin{equation}
	\label{eqn:weakfrompeq2}
	\int_\Omega u\Lap \varphi=\int_\Omega \Lap u \varphi+\int_{\partial \Omega} \left(u\frac{\partial \varphi}{\partial n}-\frac{\partial u}{\partial n}\varphi\right).
\end{equation}
Using now  $B_\theta(\varphi)=0$ and $B_\theta(u)=g$, one obtains
\begin{equation}
	\label{eqn:weakfrompeq3}
	\int_{\partial \Omega} \left(u\frac{\partial \varphi}{\partial n}-\frac{\partial u}{\partial n}\varphi\right) = -\int_{\partial^R \Omega}\frac{g\varphi}{\sin\left(\frac{\pi}{2}\theta\right)}+\int_{\partial^D \Omega}g\frac{\partial \varphi}{\partial n}.
\end{equation}
Hence, combining \eqref{eqn:weakfrompeq1}, \eqref{eqn:weakfrompeq2} and \eqref{eqn:weakfrompeq3}, we obtain
\begin{equation}
	-\int_\varepsilon^T\int_\Omega u\left(\varphi_t+\Lap \varphi\right)=
	\int_\varepsilon^T\int_\Omega (u_t-\Lap u)\varphi-\int_\Omega u(\varepsilon)\varphi(\varepsilon)-\int_\varepsilon^T\int_{\partial^{R,N} \Omega}\frac{g\varphi}{\sin\left(\frac{\pi}{2}\theta\right)}+\int_\varepsilon^T\int_{\partial^D \Omega}g\frac{\partial \varphi}{\partial n}.
\end{equation}
Note that $\int_\varepsilon^T\int_\Omega \varphi\Lap u $ is well defined because $\Lap u = u_t- f \in L^1_{loc}((0,T],L^\infty(\Omega))$.
Now, using the fact that $u_t-\Lap u=f\in
L^1((0,T),L^\infty(\Omega))$, that $u(t) \mywto{*} u_0$
$L^\infty(\Omega)$ with the weak-* topology, and the regularity of
$\varphi,g$, if we let $\varepsilon\to 0$, we obtain
\eqref{eqn:defveryweak} and then $u$ is a very weak solution of
\eqref{eq:full_nonhomogeneous}. 
\end{proof}

The next result states that semigroup solutions and the variation of
constants formula with bounded data are very weak solutions of
\eqref{eqn:defveryweakeq0} with zero boundary data. 

\begin{proposition}
\label{prop:easyweak}

\leavevmode
\begin{enumerate}
\item
If  $u_0\in L^\infty(\Omega)$, then  $u(t)=S^\theta(t)u_0$ satisfies
  \begin{equation}
    \left\{
      \begin{aligned}
        u_t-\Lap u & = 0  \qquad && \text{in} \ \Omega\times(0,\infty) \\
        B_\theta(u) & = 0 \qquad && \text{on} \ \partial \Omega \times (0,\infty) \\
        u(0) & = u_0 \qquad && \text{in} \ \Omega
      \end{aligned}
    \right.
  \end{equation}
  in the very weak sense.
        
\item
  If
  $f\in L^1((0,T),L^\infty(\Omega))\cap
  C^{\alpha}((0,T),L^\infty(\Omega))$, then
  $v(t)=\int_0^tS^\theta(t-s)f(s)ds$ satisfies
  \begin{equation}
    \label{eqn:easyweakeq3}
    \left\{
      \begin{aligned}
        v_t-\Lap v & = f  \qquad && \text{in} \ \Omega\times(0,T) \\
        B_\theta(v) & = 0 \qquad && \text{on} \ \partial \Omega \times (0,T) \\
        v(0) & = 0 \qquad && \text{in} \ \Omega
      \end{aligned}
    \right.
  \end{equation}
  in the very weak sense.
\end{enumerate}
\end{proposition}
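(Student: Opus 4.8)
The plan is to handle the two statements by different means: part (i) follows by verifying directly the hypotheses of Theorem \ref{thm:weakfromp}, whereas part (ii) is obtained by superposing the solutions produced in (i). For (i), set $u(t)=S^\theta(t)u_0$ and check each requirement of Theorem \ref{thm:weakfromp} with $f\equiv 0$ and $g\equiv 0$. Theorem \ref{thm:meaninglinf} supplies $u\in C^1((0,\infty),L^\infty(\Omega))$, the equation $\frac{du}{dt}=\Lap u$, and the weak-* convergence $u(t)\mywto{*}u_0$ as $t\to 0^+$; Theorem \ref{thm:prop2} supplies the joint smoothness $u\in C^{2,1}(\Omega\times(0,T])\cap C^1(\adh{\Omega}\times(0,T))$ together with $B_\theta(u)=0$; membership $u(t)\in D^\infty(\Lap_\theta)$ from \eqref{eq:domain_Linfty} gives $u(t),\Lap u(t)\in L^\infty(\Omega)$, and the embedding $D^\infty(\Lap_\theta)\subset C^{1+\alpha}(\adh{\Omega})$ gives $\abs{\nabla u(t)}\in L^\infty(\Omega)$; finally the contraction estimate $\norm{u(t)}_{L^\infty(\Omega)}\le \norm{u_0}_{L^\infty(\Omega)}$ yields $u\in L^\infty(\Omega\times(0,T))$. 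Theorem \ref{thm:weakfromp} then produces precisely \eqref{eqn:defveryweak} with $f=g=0$.

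For (ii) I would \emph{not} apply Theorem \ref{thm:weakfromp} to $v$ directly, since when $f$ is merely bounded (not Hölder) in space the function $\Lap v(t)=v_t(t)-f(t)$ need not be continuous in $x$, so that $v\notin C^{2,1}(\Omega\times(0,T])$ in general. Instead I exploit the autonomy of the equation: by part (i), shifted in time, for each fixed $s\in(0,T)$ the map $t\mapsto S^\theta(t-s)f(s)$ is a very weak solution on $(s,T)$ with datum $f(s)$ imposed at time $s$, so that for every $\varphi$ as in Definition \ref{def:veryweak}
\begin{equation*}
-\int_s^T\int_\Omega S^\theta(t-s)f(s)\,(\varphi_t+\Lap\varphi)
=\int_\Omega f(s)\,\varphi(s).
\end{equation*}
Substituting $v(t)=\int_0^t S^\theta(t-s)f(s)\,ds$ into $-\int_0^T\int_\Omega v(\varphi_t+\Lap\varphi)$ and interchanging the $s$- and $t$-integrations over $\{0\le s\le t\le T\}$ turns the left-hand side into $-\int_0^T\big(\int_s^T\int_\Omega S^\theta(t-s)f(s)(\varphi_t+\Lap\varphi)\big)\,ds$; the displayed identity then collapses it to $\int_0^T\int_\Omega f\varphi$, which is exactly \eqref{eqn:defveryweak} for $v$ with $u_0=0$ and $g=0$.

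The main difficulty lies in justifying the two manipulations in (ii). The Fubini interchange is legitimate because $v\in L^\infty(\Omega\times(0,T))$ with $\norm{v(t)}_{L^\infty(\Omega)}\le \norm{f}_{L^1((0,T),L^\infty(\Omega))}$, while any admissible test function obeys the uniform Gaussian bound $\abs{\varphi_t}+\abs{\Lap\varphi}\le Ce^{-c\abs{x}^2}$, so the iterated integral is absolutely convergent. The time-shifted form of (i) also requires reading off the initial term at time $s$, that is, the weak-* continuity $S^\theta(t-s)f(s)\mywto{*}f(s)$ as $t\to s^+$ granted by Theorem \ref{thm:meaninglinf}, together with the hypothesis $f\in C^\alpha((0,T),L^\infty(\Omega))$ to ensure that $v$ is well defined as a variation-of-constants integral. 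An equivalent route, should one prefer to avoid the shifted formulation, is to approximate $f$ by smooth $f_n$, apply Theorem \ref{thm:weakfromp} to the then-classical $v_n$, and pass to the limit using the linearity and continuity of the very weak formulation in the pair $(v,f)$.
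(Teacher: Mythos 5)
Your part (i) follows the paper's route (verify the hypotheses of Theorem \ref{thm:weakfromp} with $f=g=0$), but you dispatch the one hypothesis that actually requires work --- $\abs{\nabla u(t)}\in L^\infty(\Omega)$ --- by citing the embedding $D^\infty(\Lap_\theta)\subset C^{1+\alpha}(\adh{\Omega})$. That embedding comes from \emph{local} Sobolev embeddings of $W^{2,p}_{loc}(\adh{\Omega})$ and by itself only gives continuity of $\nabla u(t)$ up to the boundary, not a bound uniform over the unbounded domain; indeed the paper itself treats this as the missing step and proves the bound for large $\abs{x}$ by applying the interior Schauder estimates of Theorem \ref{thm:schauderest} on balls $B(x,1)\subset\Omega$, getting $\abs{\nabla u(x,t)}\leq C(t)\norm{u_0}_{L^\infty(\Omega)}$. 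You should supply this (or an equivalent uniform interior estimate); the conclusion is true, but as written the justification begs the question.

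Your part (ii) is correct and takes a genuinely different route. The paper upgrades $v$ to a strict solution using $f\in C^\alpha((0,T),L^\infty(\Omega))$ and semigroup theory, then redoes the integration by parts of Theorem \ref{thm:weakfromp}, which again forces it to establish $\norm{\nabla v(t)}_{L^\infty(\Omega)}<\infty$ via Schauder estimates on $S^\theta(t-s)f(s)$ integrated in $s$. You instead apply the (time-shifted) very weak identity of part (i) to each slice $t\mapsto S^\theta(t-s)f(s)$ and integrate in $s$, with Fubini justified by $\abs{\varphi_t}+\abs{\Lap\varphi}\leq Ce^{-c\abs{x}^2}$ and $f\in L^1((0,T),L^\infty(\Omega))$. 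This is cleaner: it avoids any regularity of $v$ beyond $v\in L^\infty(\Omega\times(0,T))$, and in fact it never uses the H\"older hypothesis on $f$ (which in the paper is only needed to make $v$ a strict solution, and in your write-up is not needed to define the Bochner integral either). The small points to make explicit are that Definition \ref{def:veryweak} and part (i) are invariant under time translation (so the identity on $(s,T)$ with datum $f(s)$ attained weak-* at $t=s$ is legitimate, the restriction of an admissible test function to $[s,T]$ being admissible there), and that the inner integrals are measurable in $s$. Modulo the gradient-bound gap in part (i), on which your part (ii) now also rests, the argument is sound.
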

\begin{proof}
(i) From Theorem \ref{thm:meaninglinf} and \ref{thm:prop2}, we have
that $u$ satisfies most of the conditions of Theorem
\ref{thm:weakfromp} with $f=0$ and $g=0$. Hence, we just need to check that
$\abs{\nabla u(t)}\in L^\infty(\Omega)$ for every $t>0$ to apply Theorem
\ref{thm:weakfromp}. Also as $u(t)\in
C^1(\adh{\Omega})$ we just need to prove the boundedness of $\nabla u(t)$
for large $\abs{x}$. Hence, take $\abs{x}$ large enough so that
$B(x,1)\subset \Omega$. Then, as we know that $u(t)\in
L^\infty(\Omega)$, using the Schauder estimates of Theorem
\ref{thm:schauderest} in the Appendix, we obtain, for $t>0$,
	\begin{equation}
		\abs{\nabla u(x,t)}\leq C(t)\norm{u(\cdot)}_{L^\infty(B(x,1)\times [t/2,t])}\leq C(t)\norm{u_0}_{L^\infty(\Omega)}.
	\end{equation}

\noindent (ii) From the classical theory of semigroups we have that $v\in\
C^1((0,T],L^\infty(\Omega))\cap C((0,T], D^\infty(-\Lap_\theta))$ and
satisfies $v_t-\Lap v = f$ in $(0,T)$ (See for example \cite[Theorem
4.3.4]{lunardi} or \cite[Lemma 3.2.1]{henry}). In addition, $v\to 0$
strongly in $L^\infty(\Omega)$ as $t\to 0^{+}$.

Let $\varphi$ in the conditions of
Definition \ref{def:veryweak}. Using the same arguments as in the
proof of Theorem \ref{thm:weakfromp}, for any $\varepsilon>0$ 
		\begin{equation}
			\label{eqn:propeasyweakeq1}
		-\int_\varepsilon^T\int_\Omega v\varphi_t = \int_\varepsilon^T \int_\Omega v_t\varphi+\int_\Omega v(\varepsilon)\varphi(\varepsilon)= \int_\varepsilon^T \int_\Omega \left(\Lap v+f\right)\varphi +\int_\Omega v(\varepsilon)\varphi(\varepsilon) .
	\end{equation}

Now we prove that $\norm{\nabla  v(t)}_{L^\infty(\Omega)}< \infty$ for
all $t\in (0,T)$. First, since  $v\in C((0,T], D^\infty(\Lap_\theta))$ and
$D^\infty(\Lap_\theta) \subset C^{1+\alpha}(\adh{\Omega})$ for
any $0<\alpha<1$, see \eqref{eq:domain_Linfty}, then it is enough to
obtain bounds on $\nabla v(t)$ for large $|x|$.  
For this, as by Theorem \ref{thm:meaninglinf} and \ref{thm:prop2}, for
fixed $s\in(0,T)$, $S^\theta(t-s)f(s)$ is a classical solution of the
heat equation for $t\in (s,T)$, we can take $|x|$ large  such that  $B(x,1)\subset
\Omega$ and  use the Schauder estimates from
Theorem \ref{thm:schauderest} with
$Q=B(x,1)\times[\frac{t-s}{2},t-s]$, to
obtain 
\begin{equation}
	\abs{\nabla S^\theta(t-s)f(s)(x)}\leq C\frac{\norm{f(s)}_{L^\infty(\Omega)}}{\min\left(1,\sqrt{t-s}\right)}
\end{equation}
where $C$ is independent of $f$ and $s$. Therefore, for any $0<t<T$
and $x\in \Omega$ such that $B(x,1)\subset \Omega$, 
\begin{equation}
	\abs{\nabla v(x,t)}\leq \int_0^t C\frac{\norm{f(s)}_{L^\infty(\Omega)}}{\min\left(1,\sqrt{t-s}\right)}ds<\infty
\end{equation}
due to the integrability of $f$ and its continuity in $0<t<T$ in
$L^{\infty}(\Omega)$.

From this and  \eqref{eq:domain_Linfty} we have   $v(t)\in
W^{2,p}_{loc}(\Omega)$ for any $1\leq p<\infty$, and $v(t),
\abs{\nabla v(t)}, \Lap v(t)\in L^\infty(\Omega)$, and using the same
arguments as in the proof of Theorem \ref{thm:weakfromp} (see
\eqref{eqn:weakfrompeqpre2} and \eqref{eqn:weakfrompeq2}), we obtain 
\begin{equation}
	\int_\Omega v(t)\Lap \varphi(t) = \int_\Omega \Lap v(t) \varphi(t)
\end{equation}
where the boundary term $\int_{\partial \Omega} \left(v\frac{\partial
    \varphi}{\partial n}-\frac{\partial v}{\partial n}\varphi\right)$
from \eqref{eqn:weakfrompeq3} has vanished because
$B_\theta(\varphi)=B_\theta(v)=0$.

In addition, as $v\in C((0,T],
D^{\infty} (-\Lap_{\theta}))$, so $\Lap v\in C((0,T], L^\infty(\Omega))$, we can integrate in time to obtain 
\begin{equation}
	\label{eqn:propeasyweakeq2}
	\int_\varepsilon^T \int_\Omega v\Lap \varphi = \int_\varepsilon^T \int_\Omega \varphi\Lap v .
\end{equation}

Hence, combining \eqref{eqn:propeasyweakeq1} and \eqref{eqn:propeasyweakeq2} we obtain
\begin{equation}
	-\int_\varepsilon\int_\Omega v\left(\varphi_t+\Lap \varphi\right)=\int_\varepsilon^T\int_\Omega \varphi\left(v_t-\Lap v\right)+\int_\Omega v(\varepsilon)\varphi(\varepsilon)=\int_\varepsilon^T\int_\Omega \varphi f +\int_\Omega v(\varepsilon)\varphi(\varepsilon)
\end{equation}
so taking the limit $\varepsilon\to 0$ and using that $f$ is
integrable up to $t=0$ by hypothesis, $v(\varepsilon)\to 0$ in $L^\infty(\Omega)$ when
$\varepsilon\to 0$ and $\varphi$ is a regular function, we get 
\begin{equation}
	\int_0^T\int_\Omega v\left(\varphi_t-\Lap \varphi\right)=\int_0^T\int_\Omega \varphi f
\end{equation}
and $v$ is the very weak solution of \eqref{eqn:easyweakeq3}.
\end{proof}

\section{Asymptotic behaviour for  initial data  in $L^\infty(\Omega)$}
\label{sec:initial_data_Linfty}

Our first  result shows that the solution of the heat equation in an
exterior domain is similar to the one in the whole space when we look
far away from the hole.

\begin{theorem}
  \label{thr:dataLinfty_far_from_hole}
  
Assume $N\geq 3$ and
  $u_0\in L^\infty(\Omega)$ which we assume extended to $\R^{N}$ by
  zero outside $\Omega$. Let $u(t)=S^\theta(t)u_0$ be the solution
  of the heat equation for some homogeneous $\theta-$boundary
  conditions, and let $u_{\RN}(t)=S_\RN(t) u_0$ be the
  solution in the whole space for the same initial data (extended by
  zero outside $\Omega$). Then, given $\varepsilon>0$, there exists
  $R>0$ such that, for any $\abs{x}\geq R$,
	\begin{equation}
		\label{eqn:thmlinfeq1}
		\abs{u(x,t)-u_{\RN}(x,t)}\leq \varepsilon, \qquad t>0.
	\end{equation}
\end{theorem}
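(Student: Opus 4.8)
The plan is to pass to the integral kernel representation of both solutions and reduce the pointwise difference to the $L^1$-closeness of the two kernels, which is exactly what Lemma \ref{lemma:kerncoml1bis} controls. Since $u_0$ is extended by zero outside $\Omega$, both solutions are integrals over $\Omega$, so by \eqref{eqn:ackrnpre} and $k_{\RN}(x,y,t)=G(x-y,t)$,
\[
u(x,t)-u_{\RN}(x,t)=\int_\Omega\bigl(k^\theta(x,y,t)-k_{\RN}(x,y,t)\bigr)u_0(y)\,dy,
\]
whence
\[
\abs{u(x,t)-u_{\RN}(x,t)}\leq \norm{u_0}_{L^\infty(\Omega)}\int_\Omega\abs{k^\theta(x,y,t)-k_{\RN}(x,y,t)}\,dy.
\]

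Next I would exploit the symmetry of both kernels, $k^\theta(x,y,t)=k^\theta(y,x,t)$ and $k_{\RN}(x,y,t)=G(x-y,t)=k_{\RN}(y,x,t)$, to rewrite the integral over the second slot as an integral in the first slot with $x$ held fixed as the source point. This is precisely the quantity estimated in Lemma \ref{lemma:kerncoml1bis}; applying the pointwise bound \eqref{eqn:lemma:kerncoml1biseq3} with the two variables exchanged gives
\[
\abs{u(x,t)-u_{\RN}(x,t)}\leq \norm{u_0}_{L^\infty(\Omega)}\Bigl(2\bigl(1-\Phi^0(x)\bigr)+\int_\hole k_{\RN}(y,x,t)\,dy\Bigr),
\]
and it then suffices to make both terms on the right small for $\abs{x}$ large, uniformly in $t>0$.

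The first term is independent of $t$ and is handled directly by Proposition \ref{prop:estpro} applied to the Dirichlet profile ($\theta\equiv0$, which satisfies $\theta\not\equiv1$): since $N\geq3$, one has $1-\Phi^0(x)\leq C\abs{x}^{-(N-2)}\to0$ as $\abs{x}\to\infty$. The second term carries the only genuinely quantitative step. Taking $D>0$ with $\hole\subset B(0,D)$ and restricting to $\abs{x}\geq 2D$, so that $\abs{x-y}\geq\abs{x}/2$ for every $y\in\hole$, I would bound $\int_\hole k_{\RN}(y,x,t)\,dy$ by $\abs{\hole}$ times $\sup_{t>0}G(x-y,t)$. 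For a fixed separation $r=\abs{x-y}$ the map $t\mapsto(4\pi t)^{-N/2}e^{-r^2/4t}$ is maximized at $t=r^2/(2N)$, with maximal value $c_N r^{-N}$; hence
\[
\int_\hole k_{\RN}(y,x,t)\,dy\leq \frac{c_N\abs{\hole}}{(\abs{x}/2)^N}=\frac{C}{\abs{x}^N}\qquad\text{for all }t>0.
\]
Combining the two estimates bounds the right-hand side by $C\norm{u_0}_{L^\infty(\Omega)}\bigl(\abs{x}^{-(N-2)}+\abs{x}^{-N}\bigr)$ uniformly in $t$, so a suitably large $R$ makes it at most $\varepsilon$ for all $\abs{x}\geq R$ and all $t>0$.

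The main obstacle is exactly this uniformity in $t$: one cannot simply invoke the $t\to\infty$ statement \eqref{eqn:lemma:kerncoml1biseq1}, but must keep the full pointwise bound \eqref{eqn:lemma:kerncoml1biseq3} and control the Gaussian mass over the hole for all times at once, which is achieved by the elementary maximization of the heat kernel in $t$ at fixed spatial distance. Everything else is a routine combination of the kernel symmetry, Lemma \ref{lemma:kerncoml1bis}, and the decay estimate of Proposition \ref{prop:estpro}.
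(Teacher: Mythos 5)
Your proposal is correct and follows essentially the same route as the paper: both reduce the pointwise difference, via kernel symmetry and \eqref{eqn:lemma:kerncoml1biseq3}, to the sum $2(1-\Phi^0(x))+\int_\hole k_\RN(y,x,t)\,dy$, and kill the first term by the decay of $\Phi^0$ at infinity. The only divergence is in the uniform-in-$t$ control of the Gaussian mass over the hole: the paper splits into $t\geq T$ (where $(4\pi t)^{-N/2}\abs{\hole}$ is small) and $t\leq T$ with $d(x,\partial\Omega)\geq R$ (where the exponential factor is small), whereas you maximize $t\mapsto(4\pi t)^{-N/2}e^{-r^2/4t}$ at fixed separation $r$ to get the single bound $C\abs{x}^{-N}$ for all $t>0$ at once --- a valid and slightly cleaner step that, combined with \eqref{eqn:boundphi}, even yields the explicit rate $C\norm{u_0}_{L^\infty(\Omega)}\abs{x}^{-(N-2)}$ rather than just smallness.
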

\begin{proof}
Using the symmetry of the kernels (see below~\eqref{eqn:ackrnpre}) and the estimate
\eqref{eqn:lemma:kerncoml1biseq3}, we have 
	\begin{equation}
		\label{eqn:thmlinfeq2}
		\begin{aligned}
		&	\abs{u(x,t)- u_{\RN}(x,t)}  \leq \int_\Omega
                        \abs{ k_\RN(x,y,t)-k^\theta(x,y,t)}
                        \abs{u_0(y)}dy \leq
                       \\
                     &  \norm{u_0}_{L^\infty(\Omega)}  \int_\Omega \abs{k_\RN(y,x,t)-k^\theta(y,x,t)}dy 
			\myleq{\eqref{eqn:lemma:kerncoml1biseq3}}
			\norm{u_0}_{L^\infty(\Omega)}\left(2(1-\Phi^0(x))+\int_\hole k_\RN(y,x,t)dy\right)
		\end{aligned}
	\end{equation}

Now, we have $\int_\hole k_\RN(y,x,t)dy\leq (4\pi t)^{-N/2}\abs{\hole}$, so we can find $T>0$ such that
\begin{equation}
	\label{eqn:thmlinfeq3}
	\norm{u_0}_{L^\infty(\Omega)} \int_\hole k_\RN(y,x,t) \, dy \leq \varepsilon/2, \qquad  x\in \Omega, \  t\geq T.
\end{equation}
In addition, for $t\leq T$, and $d(x,\partial\Omega)\geq R$, we have
$\int_\hole k_\RN(y,x,t)\leq e^{-\frac{R^2}{8t}} \int_{\hole} (4\pi
t)^{-N/2}e^{-\frac{\abs{x-y}^2}{8t}} \, dy \leq 2^{N/2} e^{-\frac{R^2}{8T}}$
so choosing $R$ large enough we have, 
\begin{equation}
	\label{eqn:thmlinfeq4}
	\norm{u_0}_{L^\infty(\Omega)}\int_\hole k_\RN(y,x,t) \, dy
        \leq \varepsilon/2, \qquad d(x,\partial\Omega)\geq R, \ t\leq
        T. 
\end{equation}

In addition, as $\Phi^0(x)\to 1$ when $\abs{x}\to \infty$, we can choose $R$ large enough such that
 \begin{equation}
 	\label{eqn:thmlinfeq5}
 	2\norm{u_0}_{L^\infty(\Omega)}(1-\Phi^0(x))\leq \varepsilon/2, \qquad d(x,\partial\Omega)\geq R.
 \end{equation}
Therefore, combining  \eqref{eqn:thmlinfeq2}, \eqref{eqn:thmlinfeq3}, \eqref{eqn:thmlinfeq4} and \eqref{eqn:thmlinfeq5}, we obtain \eqref{eqn:thmlinfeq1} for every $x\in \Omega$ satisfying $d(x,\partial \Omega)\geq R$.
\end{proof}

Now we prove that solutions in exterior domains in $L^\infty(\Omega)$
asymptotically converge to the solutions in $\RN$ times the asymptotic
profile $\Phi^{\theta}$.

\begin{theorem}
\label{thm:dataLinfty_Dirichlet}

Assume $N\geq 3$ and  $u_0\in L^\infty(\Omega)$. Let $u(t)=S^\theta(t)u_0$ be the solution for the heat equation with homogeneous $\theta-$boundary conditions, and $u_{\RN}(t)=S_\RN(t)u_0$ the solution of the heat equation in $\RN$ with $u_0$ extended by zero outside $\Omega$. Then
\begin{equation}
	\label{eqn:linfeq9}
\lim_{t\to\infty}\norm{u(t)-\Phi^{\theta}(\cdot)u_\RN(t)}_{L^\infty(\Omega)}=0  
\end{equation}
and moreover 
\begin{equation}
  \label{eq:rate_of_convergence_Linfty}
\norm{u(t)-\Phi^{\theta}(\cdot)u_\RN(t)}_{L^\infty(\Omega)} \leq \left\{
\begin{aligned}
& \frac{C\log(t)}{\sqrt{t}}\norm{u_0}_{L^\infty(\Omega)} &&
\text{if}  \ N=3 \\
& \frac{C}{\sqrt{t}}\norm{u_0}_{L^\infty(\Omega)}  && \text{if}  \ N>3
\end{aligned}
\right.
\end{equation}
for large $t>0$.
\end{theorem}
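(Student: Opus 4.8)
The plan is to study the difference $w := u - \Phi^\theta u_\RN$ directly, exploiting that $\Phi^\theta$ is harmonic and satisfies $B_\theta(\Phi^\theta)=0$. Since $\Lap\Phi^\theta=0$ and $\partial_t u_\RN = \Lap u_\RN$, a direct computation gives $\partial_t(\Phi^\theta u_\RN)-\Lap(\Phi^\theta u_\RN) = -2\nabla\Phi^\theta\cdot\nabla u_\RN$, and, because $B_\theta(\Phi^\theta u_\RN)=u_\RN B_\theta(\Phi^\theta)+\sin(\tfrac{\pi}{2}\theta)\Phi^\theta\,\partial_n u_\RN=\sin(\tfrac{\pi}{2}\theta)\Phi^\theta\,\partial_n u_\RN$, the product $\Phi^\theta u_\RN$ solves the heat equation with interior source $-2\nabla\Phi^\theta\cdot\nabla u_\RN$ and boundary datum $g:=\sin(\tfrac{\pi}{2}\theta)\Phi^\theta\,\partial_n u_\RN$. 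Hence, writing $F:=2\nabla\Phi^\theta\cdot\nabla u_\RN$, the function $w$ is a very weak solution of \eqref{eqn:defveryweakeq0} with data $F$, $-g$ and $w_0:=(1-\Phi^\theta)u_0$. I would make this rigorous by verifying the hypotheses of Theorem \ref{thm:weakfromp} for $\Phi^\theta u_\RN$ (bounded, smooth for $t>0$, $\to\Phi^\theta u_0$ weak-$*$) and Proposition \ref{prop:easyweak}(i) for $u$, and then split $w=w_1+w_2+w_3$ into the contributions of $w_0$, $F$ and $-g$, using linearity of the very weak formulation and the uniqueness in Corollary \ref{cor:uniqueness_veryweak_sltns}.

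The quantitative input is Proposition \ref{prop:estpro}, which gives $0\le 1-\Phi^\theta(x)\le C|x|^{2-N}$ and $|\nabla\Phi^\theta(x)|\le C|x|^{1-N}$, together with the whole-space bounds $\|u_\RN(t)\|_{L^\infty}\le\|u_0\|_{L^\infty}$ and $\|\nabla u_\RN(t)\|_{L^\infty}\le Ct^{-1/2}\|u_0\|_{L^\infty}$. These yield $|w_0(x)|\le C\|u_0\|_{L^\infty}\min(1,|x|^{2-N})$, $|F(x,t)|\le C\|u_0\|_{L^\infty}t^{-1/2}(1+|x|)^{1-N}$, and $|g(x,t)|\le C\|u_0\|_{L^\infty}t^{-1/2}$ on the compact set $\partial\Omega$. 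For $w_1=S^\theta(t)w_0$ and $w_2=\int_0^t S^\theta(t-s)F(s)\,ds$ (a very weak solution by Proposition \ref{prop:easyweak}(ii)) I would dominate $S^\theta$ by the Gaussian kernel \eqref{eqn:gyryabound} and estimate the resulting Gaussian convolutions with Proposition \ref{prop:convgauss}. Since $\min(1,|\cdot|^{2-N})\in L^p(\Omega)$ for $p>N/(N-2)$ and $(1+|\cdot|)^{1-N}\in L^p(\Omega)$ for $p>N/(N-1)$, Young's inequality with the optimal $p$ gives $\|w_1(t)\|_{L^\infty}\le Ct^{-1/2}\|u_0\|_{L^\infty}$, and, splitting the time integral of $w_2$ into $[0,t/2]$, $[t/2,t-1]$ and $[t-1,t]$ (using the contraction bound near $s=t$ and small exponents $N/(2p)$ elsewhere), $\|w_2(t)\|_{L^\infty}\le Ct^{-1/2}\|u_0\|_{L^\infty}$ when $N>3$.

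The borderline dimension $N=3$ is where the logarithm enters. There the admissible range of exponents for $w_2$ degenerates to its endpoint (for $N=3$ one has $N/(N-1)=N/2=\tfrac32$, so no interior $p$ is available), and on the middle time-slab $[t/2,t-1]$ the estimate of $w_2$ produces a factor $\int_1^{t/2}\tau^{-1}\,d\tau\sim\log t$, giving $\|w_2(t)\|_{L^\infty}\le Ct^{-1/2}\log t\,\|u_0\|_{L^\infty}$; a direct slab estimate ($|y|\le\sqrt t$ versus $|y|>\sqrt t$) shows the remaining pieces stay $O(t^{-1/2})$. Summing the three contributions reproduces exactly \eqref{eq:rate_of_convergence_Linfty}, and \eqref{eqn:linfeq9} follows at once because the right-hand side tends to $0$.

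I expect the genuine obstacle to be the boundary contribution $w_3$, for which there is no direct semigroup representation since the datum $-g$ is inhomogeneous; this is precisely where the machinery of Section \ref{sec:bounded-very-weak_solutions} is needed. The plan is to control it with the comparison Theorem \ref{thm:weakcomp}: construct a nonnegative supersolution $W_3$ of \eqref{eqn:defveryweakeq0} with zero source, zero initial value and $B_\theta(W_3)\ge|g|$ on $\partial\Omega$, decaying like $t^{-1/2}$ (up to a log for $N=3$), and then sandwich $-W_3\le w_3\le W_3$. Because $\partial\Omega$ is compact and $|g(\cdot,s)|\le C\|u_0\|_{L^\infty}s^{-1/2}$, a natural candidate combines a fixed spatial barrier $\eta\ge0$ with $B_\theta(\eta)\ge1$ on $\partial\Omega$ and suitable decay at infinity with a decreasing time factor; alternatively one lifts $g$ to an interior source by a cutoff near $\partial\Omega$, reducing $w_3$ to Duhamel estimates of the same type as $w_2$. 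The delicate points will be keeping $W_3$ a genuine \emph{super}solution of the heat equation despite the decreasing time factor, and handling the non-integrable singularity $s^{-3/2}$ that such a lift introduces near $s=0$ (forcing one to treat $s\in(0,1)$ separately and only claim the bound for large $t$). Once $\|w_3(t)\|_{L^\infty}\le Ct^{-1/2}\|u_0\|_{L^\infty}$ (with the $\log t$ for $N=3$) is established, combining with the bounds on $w_1$ and $w_2$ completes the proof.
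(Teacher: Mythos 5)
Your overall strategy coincides with the paper's: rewrite the difference as a (very weak) solution of a nonhomogeneous problem with interior source $2\nabla\Phi^\theta\cdot\nabla u_\RN$, boundary datum of size $t^{-1/2}$ and initial datum $(1-\Phi^\theta)u_0$, then use the framework of Section \ref{sec:bounded-very-weak_solutions} together with the Lorentz-space decay of $1-\Phi^\theta$ and $\nabla\Phi^\theta$ from Proposition \ref{prop:estpro} and the Gaussian convolution estimates of Proposition \ref{prop:convgauss}. Your estimates for the initial-datum and interior-source contributions (including the origin of the $\log t$ at the borderline dimension $N=3$, where $(1+|x|)^{1-N}\in L^{\frac{N}{N-1},\infty}$ forces the kernel decay $(t-s)^{-1}$ on the middle slab) are correct and match Step~3 of the paper.

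The genuine gap is the boundary contribution $w_3$, which you explicitly leave as a plan with ``delicate points''. Those points are exactly where the proof lives, and the paper resolves them with two devices you do not supply. First, it does not work with $u-\Phi^\theta u_\RN$ directly but with $v=u-(1-\eta)\Phi^\theta u_\RN$ for a cutoff $\eta\in C^\infty_c(\adh{\Omega}\times[0,\infty))$ equal to $1$ near $\hole\times[0,1]$; since the boundary datum then vanishes for $t<1$, it is bounded by $h(t)=C(1+t)^{-1/2}\norm{u_0}_{L^\infty(\Omega)}$ rather than $Ct^{-1/2}$, which removes both the infinite initial value of the barrier and the non-integrable $s^{-3/2}$ singularity of $h'$ that you flag. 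Second, the barrier is not a supersolution fought against a decreasing time factor: the paper takes $\psi=D(1-\Phi^0)$, harmonic with $B_\theta(\psi)\ge\sin(\frac{\pi}{2}\theta)$ on $\partial\Omega$ by Hopf's lemma for $D$ large, and sets
\begin{equation}
y(t)=h(t)\psi-\int_0^t S^\theta(t-s)\bigl(h'(s)\psi\bigr)\,ds ,
\end{equation}
so that the Duhamel term cancels the source $h'(t)\psi$ produced by differentiating $h$ and $y$ is an \emph{exact} caloric function with $B_\theta(y)\ge \sin(\frac{\pi}{2}\theta)h(t)$, $y(0)\ge 0$; the comparison Theorem \ref{thm:weakcomp} then gives the two-sided bound, and $\norm{y(t)}_{L^\infty(\Omega)}\le C\norm{u_0}_{L^\infty(\Omega)}t^{-1/2}$ follows because $\psi\in L^{\frac{N}{N-2},\infty}(\Omega)\cap L^\infty(\Omega)$ and $|h'(s)|\le C(1+s)^{-3/2}$. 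Without these two ingredients your $w_3$ is not actually estimated, so as written the argument is incomplete at its hardest step; with them, your decomposition becomes a minor reorganization of the paper's proof.
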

\begin{proof} 
\textbf{Step 1. }
  First to  remove any singular behavior near $t=0$ we   consider  a cut-off function $\eta\in
  C^\infty_c(\adh{\Omega}\times[0,\infty))$ such that $\eta\equiv 1$
  in $B(0,R)\times[0,1]$ with $R>0$ large enough so that
  $\hole\subset B(0,R)$, and $\eta\equiv 0$ in
  $\adh{\Omega}\times[2,\infty)$. Then, define  $v(x,t)\defeq 
S^{\theta}(t)u_0(x)-(1-\eta(x,t))\Phi^{\theta}(x)S_\RN(t)u_0(x)$. We have, by
Theorem \ref{thm:meaninglinf} and \ref{thm:prop2} and the regularity
of $\Phi^{\theta}$, that $v\in
C^1((0,\infty),L^\infty(\Omega))\cap C^{2,1}(\Omega\times(0,\infty))\cap C^1(\adh{\Omega}\times(0,\infty))\cap L^\infty(\Omega\times (0,\infty))$ and
satisfies in a pointwise sense 
\begin{equation}
      \label{eqn:linfeq1}
  \left\{
		\begin{aligned}
			v_t-\Lap v & = f \qquad
			&& \text{in} \ \Omega\times(0,\infty) \\
			B_\theta(v) & = \sin \left(\frac{\pi}{2}\theta\right)g \qquad && \text{on} \ \partial\Omega\times(0,\infty) \\
			v(t) & \mywto{*}  \widetilde{u}_0, \qquad \qquad t\to 0^+, && L^\infty(\Omega) \ \ \text{weak}-*
		\end{aligned}
		\right.
	\end{equation}
where 
\begin{equation}
	\label{eqn:linfeq6}
	\begin{aligned}
		f(x,t) & \defeq 2\nabla ((1-\eta)\Phi^\theta)\cdot \nabla S_\RN(t)u_0 - (\eta_t-\Lap \eta)\Phi^\theta S_\RN(t)u_0 \\
		g(x,t)  & \defeq \Phi^\theta\left((\eta-1)\frac{\partial S_\RN(t)u_0}{\partial n}+\frac{\partial \eta}{\partial n}S_\RN(t)u_0\right) \\
		\widetilde{u}_0(x) & =
                (1-(1-\eta(x,0))\Phi^{\theta}(x))u_0(x) . 
	\end{aligned}
\end{equation}

From  the explicit form of the heat kernel in $\RN$, we have that
\begin{equation}
	\label{eqn:linfeq4}
	\norm{\nabla S_\RN(t)u_0}_{L^\infty(\Omega)}\leq
	\frac{C}{\sqrt{t}}\norm{u_0}_{L^\infty(\Omega)} . 
\end{equation}
Therefore, as $\Phi^\theta, \nabla \Phi^\theta\in L^\infty(\Omega)$, $\norm{S_\RN(t)u_0}_{L^\infty(\Omega)}\leq \norm{u_0}_{L^\infty(\Omega)}$ and $\eta\in C^\infty_c(\adh{\Omega}\times [0,\infty))$, from \eqref{eqn:linfeq6} and \eqref{eqn:linfeq4} we obtain 
\begin{equation}
	\label{eqn:linfeq5}
	\norm{f(t)}_{L^\infty(\Omega)}+\norm{g(t)}_{L^\infty(\partial \Omega)}\leq \frac{C}{\sqrt{t}}\norm{u_0}_{L^\infty(\Omega)} \qquad t>0.
\end{equation}
 In particular, for every $T>0$, $f\in L^1((0,T), L^\infty(\Omega))$ and $g\in L^1((0,T),L^\infty(\partial \Omega))$. 
Hence, from  Theorem \ref{thm:weakfromp},   \eqref{eqn:linfeq1} is
satisfied in the  very weak sense of  Definition \ref{def:veryweak}. 

\noindent \textbf{Step 2. }
In this step we will construct two auxiliary functions $w$ and $y$
which we will use to estimate  $v$. First, as for any $T>0$, $f\in L^1((0,T), L^\infty(\Omega))$, and, from the expression \eqref{eqn:linfeq6} and Theorem
\ref{thm:meaninglinf}, we obtain $f\in
C^1((0,\infty),L^\infty(\Omega))$,
we can consider
\begin{equation}
	w(t) = S^{\theta}(t)\widetilde{u}_0 + \int_0^t S^{\theta}(t-s)f(s) ds, \qquad t>0, 
\end{equation}
which, by Proposition \ref{prop:easyweak}, satisfies in the very weak sense
\begin{equation}
	\label{eqn:linfeq2}
	\left\{
	\begin{aligned}
		w_t-\Lap w & = f  \qquad && \text{in} \ \Omega\times(0,\infty) \\
		B_\theta(w) & = 0 \qquad && \text{on} \ \partial \Omega \times (0,\infty) \\
		w(0) & = \widetilde{u}_0 \qquad  && \text{in} \ \Omega .
	\end{aligned}
	\right.
\end{equation}

Now, note that, as $\eta\equiv 1$ in a neighbourhood of $\partial
\Omega$ for $t\in [0,1)$, from \eqref{eqn:linfeq6}, we can
improve the estimate \eqref{eqn:linfeq5} to  
\begin{equation} \label{eq:decay_of_h}
	\norm{g(t)}_{L^\infty(\partial \Omega)}\leq h(t)\defeq
        \frac{C}{(1+t)^{1/2}}\norm{u_0}_{L^\infty(\Omega)}, \qquad
        t>0. 
\end{equation}
Finally, consider $\psi=D(1-\Phi^0)$ with $D>0$. As $\psi$ is harmonic and attains its maximum value on the boundary, from the Hopf lemma we know
that $\frac{\partial \psi}{\partial n}>0$ on $\partial \Omega$, so we
can find $D>0$ large enough such that $B_\theta(\psi)\geq
\sin\left(\frac{\pi}{2}\theta\right)$ on $\partial \Omega$. Then, we define 
\begin{equation} \label{eq:definition_y}
	y(t)\defeq h(t)\psi-\int_0^t
        S^{\theta}(t-s)(h'(s) \psi)ds ,
\end{equation}
which, as we will prove below, satisfies in the very weak sense,
\begin{equation}
	\label{eqn:linfeq3}
	\left\{
	\begin{aligned}
		y_t-\Lap y & = 0 \quad && \text{in} \ \Omega\times(0,\infty)
		\\
		B_\theta(y) & \geq
		\sin\left(\frac{\pi}{2}\theta\right)h(t)\geq
		\sin\left(\frac{\pi}{2}\theta\right)g(t)\qquad && \text{on} \ 
		\partial\Omega \times (0,\infty)
		\\
		y(0) & \geq 0 \qquad && \text{in} \  \Omega.
	\end{aligned}
	\right.
\end{equation}

Assumed this for a moment,  we obtain then that $-y + w \leq v\leq w+y$. 
Actually, if we define $z\defeq v-w-y$, combining \eqref{eqn:linfeq1},
\eqref{eqn:linfeq2} and \eqref{eqn:linfeq3}, we obtain that $z$
satisfies in the  very weak sense 
\begin{equation}
	\label{eq:z=v-w}
	\left\{
	\begin{aligned}
		z_t-\Lap z  & = 0 \qquad
		&& \text{in} \ \Omega\times(0,\infty) \\
		B_\theta(z) & \leq  0 \qquad && \text{on} \ \partial\Omega\times(0,\infty) \\
		z(0) & \leq 0 \qquad && \text{in} \ \Omega.
	\end{aligned}
	\right.
\end{equation}
Therefore, using Theorem \ref{thm:weakcomp}, we obtain that $z\leq 0$,
that is $v\leq w+y$, as claimed. The other inequality is obtained in
the same way.

Hence, let us prove that \eqref{eqn:linfeq3} is satisfied in the very
weak sense. Let us start with $y_1(t)\defeq h(t)\psi$, which belongs
to $ C^1((0,\infty),L^\infty(\Omega))\cap
C^{2,1}(\adh{\Omega}\times(0,\infty))$, satisfies in a pointwise sense 
\begin{equation}
	\label{eqn:linfeq7}
	\left\{
	\begin{aligned}
		(y_1)_t-\Lap y_1 & = h'(t)\psi \qquad && \text{in} \  \Omega\times(0,\infty) \\
		B_\theta(y_1) & = h(t)B_\theta(\psi) \qquad && \text{on} \  \partial \Omega \times (0,\infty) \\
		y_1(0) & = h(0)\psi \qquad && \text{on} \ \Omega
	\end{aligned}
	\right.
\end{equation}	
and $y_1(t)\to h(0)\psi\geq 0$ in $L^\infty(\Omega)$ when $t\to
0$. Therefore, $y_1$ is under the conditions of Theorem
\ref{thm:weakfromp} and \eqref{eqn:linfeq7} is satisfied in the very weak sense.

As for  $y_2(t)\defeq -\int_0^t S^\theta(t-s)\left( h'(s) \psi \right)ds$, 
we have that, for every $T>0$, $h'\psi \in
C^1((0,T),L^\infty(\Omega))\cap
L^1((0,T),L^\infty(\Omega))$. Therefore, $y_2$ is under the conditions
of Proposition \ref{prop:easyweak} (ii) and satisfies in the very weak sense
\begin{equation}
		\label{eqn:linfeq8}
	\left\{
	\begin{aligned}
		(y_2)_t-\Lap y_2 & = -h'(t)\psi \qquad && \text{in} \  \Omega\times(0,T] \\
		B_\theta(y_2) & = 0 \qquad && \text{on} \  \partial \Omega \times (0,T] \\
		y_2(0) & = 0 \qquad && \text{in} \ \Omega . 
	\end{aligned}
	\right.
\end{equation}
Combining \eqref{eqn:linfeq7} and \eqref{eqn:linfeq8}, we obtain that 
\eqref{eqn:linfeq3} is satisfied in the very weak sense.  

In the next steps we will show that $w(t)$ and $y(t)$ tend to zero
uniformly as $t\to \infty$. 

\noindent \textbf{Step 3.}
Let us see that $w(t)\to 0$ in $L^\infty(\Omega)$ when
$t\to\infty$. First, by Proposition \ref{prop:estpro}, we have
$\Phi^\theta\in L^\infty(\Omega)$ and the decay at infinity of
$\Phi^{\theta}$ implies that $(1-\Phi^\theta)\in
L^{\frac{N}{N-2},\infty}(\Omega)$, see Appendix \ref{app:young}. Therefore, as $\eta\in
C^\infty_c(\adh{\Omega}\times[0,\infty))$, we have  from \eqref{eqn:linfeq6}
\begin{equation}
	\label{eqn:linfeq10}
	\norm{\widetilde{u}_0}_{L^\infty(\Omega)}+\norm{\widetilde{u}_0}_{L^{\frac{N}{N-2},\infty}(\Omega)}\leq
        C\norm{u_0}_{L^\infty(\Omega)} . 
\end{equation}
Then, using the Gaussian estimates \eqref{eqn:gyryabound} as well as
the properties of convolution in Lorentz spaces in   Proposition
\ref{prop:convgauss} and the fact that $S^\theta(t)$ is a semigroup of
contractions in $L^{\infty}(\Omega)$, we get for any $t>0$ 
\begin{equation}
	\label{eqn:limlinfteq1_0}
		\|S^{\theta} (t)\widetilde{u}_0\|_{L^\infty(\Omega)}    \leq
		C\min\left(\frac{\norm{\widetilde{u}_0}_{L^{\frac{N}{N-2},\infty}}}{t^{\frac{N-2}{2}}}
		,
		\norm{\widetilde{u}_0}_{L^\infty(\Omega)}\right)\myleq{\eqref{eqn:linfeq10}} 
		\frac{C\norm{u_0}_{L^\infty(\Omega)}}{(1+t)^{\frac{N-2}{2}}}.
\end{equation}

Now, let us estimate $f(t)$ in \eqref{eqn:linfeq6}. By Proposition
\ref{prop:estpro}, we have $\Phi^\theta\in L^\infty(\Omega)$ and again
the decay at infinity implies 
$\nabla \Phi^\theta\in L^\infty(\Omega)\cap
L^{\frac{N}{N-1},\infty}(\Omega)$, so using \eqref{eqn:linfeq4} and
the fact that $\eta$ and $\eta_{t}- \Delta \eta$ are of compact
support, we
have  
\begin{equation}
	\label{eqn:limlinfteq5}
	\norm{f(t)}_{L^\infty(\Omega)}+\norm{f(t)}_{L^{\frac{N}{N-1},\infty}(\Omega)}\leq \frac{C \norm{u_0}_{L^\infty(\Omega)}}{\sqrt{t}} \qquad t>0.
\end{equation}

Therefore, using that $S^\theta(t)$ is a semigroup of contractions in $L^\infty(\Omega)$,
\begin{equation}
	\label{eqn:limlinfteq2_0}
		\int_{t-1}^t \|S^{\theta}(t-s)  f(s)\|_{L^\infty(\Omega)} \, ds  \myleq{\eqref{eqn:limlinfteq5}} \int_{t-1}^t
		\frac{C\norm{u_0}_{L^\infty(\Omega)} }{\sqrt{s}}  \, ds  
			 \leq
                         \frac{C\norm{u_0}_{L^\infty(\Omega)}}{\sqrt{t}}
                         . 
\end{equation}

On the other hand,  using the Gaussian estimates \eqref{eqn:gyryabound} and  Proposition
\ref{prop:convgauss} with $p=\frac{N}{N-2}$ and $q=\infty$, we obtain
\begin{equation}
\label{eqn:limlinfteq3_0}
\begin{aligned}
& \int_{0}^{t-1} \norm{S^{\theta}(t-s)f(s)}_{L^\infty(\Omega)} \, ds   \leq
\int_0^{t-1}\frac{C\norm{f(s)}_{L^{\frac{N}{N-1},\infty}(\Omega)}}{(t-s)^{\frac{N-1}{2}}}
 \, ds  \leq
 \int_0^{t-1}\frac{C\norm{u_0}_{L^\infty(\Omega)}}{\sqrt{s} (t-s)^{\frac{N-1}{2}}}
 \, ds \\ 
& \leq
\frac{C}{t^{\frac{N-1}{2}}}
\int_0^{t/2}\frac{\norm{u_0}_{L^\infty(\Omega)}}{\sqrt{s}} \, ds+
\frac{C}{\sqrt{t}}
\int_{t/2}^{t-1}\frac{\norm{u_0}_{L^\infty(\Omega)}}{(t-s)^{\frac{N-1}{2}}}
\, ds 
\leq \left\{
\begin{aligned}
	C\left(\frac{1}{\sqrt{t}}+\frac{\log(t)}{\sqrt{t}}\right)\norm{u_0}_{L^\infty(\Omega)}
        \quad  & \text{if} \ N=3 \\ 
	C\left(\frac{1}{t^{\frac{N-2}{2}}}+\frac{1}{\sqrt{t}}\right)\norm{u_0}_{L^\infty(\Omega)}
	\quad & \text{if}   \ N>3 
\end{aligned}
\right.
\end{aligned}
\end{equation}
for large $t>0$. 

Therefore, combining \eqref{eqn:limlinfteq1_0},
\eqref{eqn:limlinfteq2_0} and \eqref{eqn:limlinfteq3_0}, and using
$\frac{N-2}{2}\geq \frac{1}{2}$ 
when $N> 3$, we have that
$\norm{w(t)}_{L^\infty(\Omega)}\to 0$ when $t\to \infty$ with the
 rate in \eqref{eq:rate_of_convergence_Linfty}. 

\noindent \textbf{Step 4. }Let us see that $y(t)\to 0$ in $L^\infty(\Omega)$ when
$t\to\infty$ even faster than $w(t)$ in Step 3. Once again from the decay at infinity  in Proposition
\ref{prop:estpro}, we have  that $\psi=D(1-\Phi^0)\in L^\infty(\Omega)\cap
L^{\frac{N-2}{2}, \infty}(\Omega)$ and therefore, using the same
arguments in  Step 3 to prove \eqref{eqn:limlinfteq1_0}, we obtain 
$\norm{S^{\theta}(t)\psi}_{L^\infty(\Omega)}\leq
\frac{C}{(1+t)^{\frac{N-2}{2}}}$. Thus, for large $t>0$, using
\eqref{eq:decay_of_h}, we have 
\begin{equation}
	\begin{aligned}
		& \bnorm{\int_0^t S^{\theta}
			(t-s)( h'(s) \psi ) \, ds}_{L^\infty(\Omega)}  \leq
		\int_0^t\frac{C\norm{u_0}_{L^\infty(\Omega)}}{(1+t-s)^{\frac{N-2}{2}}(1+s)^{3/2}}
                \, ds \\
		& \leq C \norm{u_0}_{L^\infty(\Omega)}\left(
                  \frac{1}{(1+\frac{t}{2})^{\frac{N-2}{2}}}\int_0^{t/2}\frac{ds}{(1+s)^{3/2}}
                  + \frac{1}{(1+\frac{t}{2})^{\frac{3}{2}}} \int_{t/2}^t\frac{ds}{(1+t-s)^{\frac{N-2}{2}}}
                \right)		\leq
                \frac{C\norm{u_0}_{L^\infty(\Omega)}}{(1+t)^{\frac{1}{2}}}                
	\end{aligned}
      \end{equation}
where we have used that, since $N\geq 3$,   $\frac{N-2}{2}\geq \frac{1}{2}$ 
      
Therefore, from this,   \eqref{eq:definition_y} and
\eqref{eq:decay_of_h}, we have, for large $t>0$, $\norm{y(t)}_{L^\infty(\Omega)}\leq
C\norm{u_0}_{L^\infty(\Omega)}/\sqrt{t}$.

\noindent \textbf{Step 5. } 
From Step 2,  $-y + w \leq v\leq w+y$ while from 
Steps 3 and 4, $\norm{w(t)}_{L^\infty(\Omega)} +
\norm{y(t)}_{L^\infty(\Omega)}$ decays to zero with the rate in
\eqref{eq:rate_of_convergence_Linfty} and we get the result. 
\end{proof}

The solutions of the heat equation in $\R^{N}$ present a complex
dynamical behaviour as shown in  \cite{vazquez2002complexity}. The
previous result allows us to translate such complex behavior to the
solutions of the heat equation in an exterior domain.

\begin{theorem}
\label{thm:compleq1}
Assume $N\geq 3$ and some homogeneous $\theta-$boundary conditions. There exists $u_0\in L^\infty(\Omega)$ such that, for
every $\varepsilon,\delta, T>0$ and $g\in L^\infty(\RN)$ with
$\norm{g}_{L^\infty(\RN)}\leq 1$, there exists $t_{*}\geq T$ such
that, if we denote $u(x,t)=S^\theta(t)u_0(x)$ and $\tilde{g}=S_\RN(1)g$, 
\begin{equation}
\label{eqn:compleq1}
		\abs{u(x,t_{*})-\Phi^\theta(x)\tilde{g}\left(\frac{x}{\sqrt{t_{*}}}\right)}\leq
                \varepsilon \qquad \forall \abs{x}^2\leq \delta t_{*} . 
	\end{equation}
\end{theorem}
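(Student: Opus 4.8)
The plan is to reduce the statement to the universality (complexity) result for the heat equation in the whole space, \cite{vazquez2002complexity}, through the asymptotic comparison of Theorem \ref{thm:dataLinfty_Dirichlet}. By \cite{vazquez2002complexity} there is a universal datum $\bar u_0\in L^\infty(\R^N)$, which we may take with $\norm{\bar u_0}_{L^\infty(\R^N)}\le 1$, enjoying the following property: writing $\bar u(t)=S_\RN(t)\bar u_0$, for every $\varepsilon,\delta,T>0$ and every $g$ with $\norm{g}_{L^\infty(\R^N)}\le 1$ there is a time $t_*\ge T$ such that, with $\tilde g=S_\RN(1)g$,
\begin{equation}
\sup_{\abs{x}^2\le \delta t_*}\abs{\bar u(x,t_*)-\tilde g(\tfrac{x}{\sqrt{t_*}})}\le \varepsilon .
\end{equation}
I would take as datum for \eqref{eq:heat_theta} the restriction $u_0:=\bar u_0\,1_\Omega$ (extended by zero on $\hole$), so that $\norm{u_0}_{L^\infty(\Omega)}\le 1$ and, when $u_0$ is in turn extended by zero to all of $\R^N$ as in Theorem \ref{thm:dataLinfty_Dirichlet}, the whole-space solution appearing there is exactly $u_\RN(t)=S_\RN(t)u_0$.

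The first step is to control the effect of having altered $\bar u_0$ on the compact hole. Since $\bar u_0\,1_\hole\in L^1(\R^N)$ with $\norm{\bar u_0\,1_\hole}_{L^1(\R^N)}\le \abs{\hole}$, the $L^1$--$L^\infty$ smoothing of the Gaussian yields
\begin{equation}
\norm{u_\RN(t)-\bar u(t)}_{L^\infty(\R^N)}=\norm{S_\RN(t)(\bar u_0\,1_\hole)}_{L^\infty(\R^N)}\le \frac{C\abs{\hole}}{t^{N/2}}\longrightarrow 0 \quad (t\to\infty).
\end{equation}
Thus replacing $\bar u_0$ by $u_0$ perturbs the rescaled profiles by a quantity decaying faster than the rates in \eqref{eq:rate_of_convergence_Linfty}, so the universality of $\bar u_0$ transfers to $u_\RN(t)=S_\RN(t)u_0$ up to a controllable error.

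Next I assemble three errors by the triangle inequality. Given $\varepsilon,\delta,T>0$ and $g$, first fix $T'\ge T$ large enough that, for all $t\ge T'$, both the convergence rate of Theorem \ref{thm:dataLinfty_Dirichlet}, namely $\norm{u(t)-\Phi^\theta u_\RN(t)}_{L^\infty(\Omega)}\le \varepsilon/3$ (using $\norm{u_0}_{L^\infty(\Omega)}\le 1$), and the hole correction $\norm{u_\RN(t)-\bar u(t)}_{L^\infty(\R^N)}\le \varepsilon/3$ hold. Then apply the whole-space universality with parameters $(\varepsilon/3,\delta,T')$ to obtain $t_*\ge T'$ with $\sup_{\abs{x}^2\le\delta t_*}\abs{\bar u(x,t_*)-\tilde g(x/\sqrt{t_*})}\le \varepsilon/3$. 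For $\abs{x}^2\le \delta t_*$ I then estimate
\begin{equation}
\abs{u(x,t_*)-\Phi^\theta(x)\tilde g(\tfrac{x}{\sqrt{t_*}})}\le \abs{u(x,t_*)-\Phi^\theta(x)u_\RN(x,t_*)}+\abs{\Phi^\theta(x)}\,\abs{u_\RN(x,t_*)-\tilde g(\tfrac{x}{\sqrt{t_*}})},
\end{equation}
and since $0\le \Phi^\theta\le 1$ by Proposition \ref{prop:estpro} the factor $\abs{\Phi^\theta(x)}$ may be dropped; splitting the last bracket through $\bar u(x,t_*)$ bounds it by $\varepsilon/3+\varepsilon/3$, which together with the first term $\le \varepsilon/3$ gives \eqref{eqn:compleq1}.

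The genuinely hard input is external: the existence and precise form of the universal orbit of \cite{vazquez2002complexity}, in particular that the attainable rescaled profiles are exactly the images $S_\RN(1)g$ of the unit ball and that they are reached under the self-similar scaling $x\mapsto x/\sqrt{t_*}$. Everything on the exterior-domain side is soft; the only points requiring care are (i) matching the scaling convention and the profile class to the cited theorem, (ii) the harmless modification of the universal datum on $\hole$, absorbed by the $t^{-N/2}$ decay above, and (iii) the order of quantifiers, i.e. collapsing the Theorem \ref{thm:dataLinfty_Dirichlet} rate and the hole correction into a single threshold $T'$ \emph{before} invoking universality, which is legitimate precisely because that universality holds for arbitrarily large target times.
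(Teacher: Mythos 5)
Your proposal is correct and follows essentially the same route as the paper: invoke the universality result of \cite{vazquez2002complexity} to produce a datum whose rescaled whole-space orbit accumulates on $S_\RN(1)g$ for every $g$ in the unit ball, then transfer to the exterior domain via Theorem \ref{thm:dataLinfty_Dirichlet} and the bound $0\leq\Phi^\theta\leq 1$. The only difference is your explicit $L^1$--$L^\infty$ smoothing estimate for the modification of the datum on $\hole$; the paper handles this implicitly, since altering $u_0$ on a compact set does not change the weak-* accumulation set of $u_0(\lambda\cdot)$ as $\lambda\to\infty$, but your explicit correction is equally valid.
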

\begin{proof}
  Given $u_0\in L^\infty(\Omega)$ let us denote
  $u_\RN(x,t)=S_\RN(t)u_0$, where $u_0$ is extended by zero outside
  $\Omega$.  It was proved in \cite{vazquez2002complexity} Theorem 2.1
  that the set of accumulation points of
  $u_{\R^{N}}(\sqrt{t} \cdot , t)$ in $L^{\infty}_{loc}(\R^{N})$, as
  $t\to \infty$ coincides with $S_{\R^{N}}(1)\phi$, where $\phi$
  ranges over $\omega(u_0)$, the set of accumulation points of
  $u_0(\lambda \cdot)$ in $L^\infty(\RN)$ with the weak-* topology, as
  $\lambda \to \infty$. 
	
  In addition, for any bounded sequence of functions in
  $L^\infty(\RN)$, it was proved in \cite{vazquez2002complexity} that
  there exists $u_0\in L^\infty(\RN)$ such that $\omega(u_0)$ contains
  this sequence. As the space $L^\infty(\RN)$ is separable with the
  weak-* topology, and $\omega(u_0)$ is weak-* closed, we can find $u_0$
  such that $\omega(u_0)$ contains the unit ball
  $B_{L^\infty(\RN)}$. Therefore, given $g\in B_{L^\infty(\RN)}$, we
  have that $\tilde{g} =S_\RN(1)g$ is an accumulation point of
  $u_{\R^{N}}(\sqrt{t} \cdot , t)$ in
  $L^{\infty}_{loc}(\R^{N})$ as $t\to \infty$. Hence, given $\delta,T>0$, there exists
  $t_{*}\geq T$ such that
	\begin{equation}
		\abs{u_{\R^{N}}(\sqrt{t_{*}} y , t_{*})-\tilde{g}(y)}\leq \varepsilon/2 \qquad \forall \abs{y}^2\leq \delta
	\end{equation}
so changing variables $x=\sqrt{t_{*}} y$ we obtain
	\begin{equation}
		\label{eqn:compleq3}
	\abs{u_{\R^{N}}(x ,
          t_{*})-\tilde{g}\left(\frac{x}{\sqrt{t_{*}}}\right)}\leq
        \varepsilon/2 \qquad \forall \abs{x}^2\leq \delta t_{*} .
	\end{equation}

        In addition, using Theorem \ref{thm:dataLinfty_Dirichlet} we have
that, taking $T$ larger if necessary, 
\begin{equation}
	\label{eqn:compleq2}
	\norm{S^\theta(t)u_0-\Phi^\theta(\cdot)u_\RN(\cdot,t)}_{L^\infty(\Omega)}\leq \varepsilon/2 \qquad \forall t\geq T.
\end{equation}
Combining then \eqref{eqn:compleq2} and \eqref{eqn:compleq3}, as well
as the fact that $0\leq \Phi^{\theta}\leq 1$ one obtains \eqref{eqn:compleq1}. 
\end{proof}

Another expression of the complexity of the behaviour for bounded
initial data is the following result that states that,  given a bounded sequence of positive values
$\{a_{n}\}_{n}$ we can construct an initial data
$u_0\in L^\infty(\Omega)$ such that at any given point $x_{0}\in \Omega$,
$S^{\theta}(t)u_{0}$ attains the values $a_{n}$ at a sequence of divergent times
$t_{n}$.

\begin{theorem}
\label{thm:dowhatyouwant}

Let $\{a_n\}_{n\in\mathbb{N}}$ be a sequence of real numbers such that
$0<a_n<1$ for every $n\in \mathbb{N}$. Consider some homogeneous $\theta-$boundary conditions. Then, for any $x_0\in \Omega$,
there exists an initial datum $u_0\in L^\infty(\Omega)$ with
$\norm{u_0}_{L^\infty(\Omega)} = 1$ and a sequence of times
$t_n\to \infty$ such that
\begin{equation}
S^\theta(t_n)u_0(x_0)=a_n \Phi^\theta(x_0) . 
\end{equation}
\end{theorem}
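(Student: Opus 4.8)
The plan is to reuse the single initial datum and the rescaling dictionary from V\'azquez already exploited in the proof of Theorem \ref{thm:compleq1}, and then to promote the resulting \emph{asymptotic} identities at the point $x_0$ into \emph{exact} equalities by an intermediate value argument in the time variable. First I would fix $u_0\in L^\infty(\RN)$, normalised to $\norm{u_0}_{L^\infty(\RN)}=1$, whose rescaling limit set $\omega(u_0)$ (the weak-* accumulation points of $u_0(\lambda\,\cdot)$ as $\lambda\to\infty$) contains the whole unit ball $B_{L^\infty(\RN)}$; such $u_0$ exists by the construction quoted in Theorem \ref{thm:compleq1}. Redefining $u_0\equiv 0$ on the hole $\hole$ does not alter $\omega(u_0)$, since $x\mapsto u_0(\lambda x)$ samples $u_0$ only far from the fixed compact set $\hole$ as $\lambda\to\infty$ (the modification changes each weak-* pairing by $O(\lambda^{-N})$); thus we may assume $u_0\in L^\infty(\Omega)$, extended by zero outside $\Omega$. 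In particular, for every constant $c\in[-1,1]$ the constant function $c$ lies in $\omega(u_0)$, and since the heat semigroup in $\RN$ preserves constants, $S_\RN(1)c=c$.

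Next I would record the key pointwise statement. By V\'azquez's theorem as quoted, for $c\in\omega(u_0)$ the constant $S_\RN(1)c=c$ is an accumulation point of $u_\RN(\sqrt t\,\cdot,t)$ in $L^\infty_{loc}(\RN)$, so there are times $s_k\to\infty$ with $u_\RN(\sqrt{s_k}\,\cdot,s_k)\to c$ uniformly on $\overline{B(0,1)}$. Evaluating at $x_0/\sqrt{s_k}$, which tends to $0\in\overline{B(0,1)}$, gives $u_\RN(x_0,s_k)=[u_\RN(\sqrt{s_k}\,\cdot,s_k)](x_0/\sqrt{s_k})\to c$. Combining this with Theorem \ref{thm:dataLinfty_Dirichlet}, which furnishes $\abs{S^\theta(t)u_0(x_0)-\Phi^\theta(x_0)u_\RN(x_0,t)}\to 0$, I obtain $S^\theta(s_k)u_0(x_0)\to\Phi^\theta(x_0)\,c$. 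Hence, for each $c\in[-1,1]$, the scalar trajectory $t\mapsto S^\theta(t)u_0(x_0)$ comes arbitrarily close to $\Phi^\theta(x_0)\,c$ at arbitrarily large times.

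Finally, to force the exact value $a_n\Phi^\theta(x_0)$ at some $t_n\to\infty$, I would invoke the intermediate value theorem, which is where having a full interval of constants in $\omega(u_0)$ is essential. Fixing $n$, choose $0<c_-<a_n<c_+<1$. Since $N\ge 3$, Proposition \ref{prop:estpro} (or $\Phi^1\equiv 1$ in the Neumann case) gives $\Phi^\theta(x_0)>0$, whence $\Phi^\theta(x_0)c_-<\Phi^\theta(x_0)a_n<\Phi^\theta(x_0)c_+$. Applying the previous paragraph to $c_-$ and $c_+$, there exist $\tau,\tau'\ge n$ with $S^\theta(\tau)u_0(x_0)<\Phi^\theta(x_0)a_n$ and $S^\theta(\tau')u_0(x_0)>\Phi^\theta(x_0)a_n$. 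As $t\mapsto S^\theta(t)u_0(x_0)$ is continuous on $(0,\infty)$ by Theorems \ref{thm:meaninglinf} and \ref{thm:prop2}, the intermediate value theorem yields $t_n$ between $\tau$ and $\tau'$ with $S^\theta(t_n)u_0(x_0)=a_n\Phi^\theta(x_0)$, and $t_n\ge n$ forces $t_n\to\infty$. The main obstacle is exactly this last upgrade from convergence to equality: the asymptotics only deliver the prescribed value in the limit, so one must bracket the target from both sides using two nearby constants and use continuity in $t$; the positivity $\Phi^\theta(x_0)>0$ (hence the implicit hypothesis $N\ge 3$, away from the degenerate two-dimensional Dirichlet/Robin case where $\Phi^\theta\equiv 0$) is what makes this bracketing meaningful.
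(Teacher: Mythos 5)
Your proof is correct, but it follows a genuinely different route from the paper's. The paper constructs the datum explicitly and by hand: $u_0$ is a sum of indicators of annuli $B_{R_{2n}}\setminus B_{R_{2n-1}}$ whose radii are chosen inductively (via Lemma \ref{lemma:predowhatyouwant}) so that the Gaussian average at $x_0$ drops below $a_{2n+1}-g(t_{2n+1})$ at odd times and exceeds $a_{2n}+g(t_{2n})$ at even times, after first reducing to an alternating sequence; the exact values are then recovered by the intermediate value theorem, just as you do. You instead recycle the universal V\'azquez--Zuazua datum already used in Theorem \ref{thm:compleq1}, observe that every constant $c\in[-1,1]$ lies in $\omega(u_0)$ and is fixed by $S_\RN(1)$, deduce via Theorem \ref{thm:dataLinfty_Dirichlet} that the scalar trajectory $t\mapsto S^\theta(t)u_0(x_0)$ accumulates at $\Phi^\theta(x_0)c$ for every such $c$, and then bracket each target $a_n\Phi^\theta(x_0)$ between two such accumulation values before applying the IVT. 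What your approach buys is a \emph{single} datum, independent of the sequence $\{a_n\}$ and of the point $x_0$, which is a formally stronger conclusion; what it costs is reliance on the full strength of the V\'azquez--Zuazua universality construction, whereas the paper's argument is elementary and self-contained given the Gaussian kernel. Two small points to tidy: the perturbation of the weak-* pairing caused by zeroing $u_0$ on $\hole$ is $2\int_{\hole/\lambda}\abs{\varphi}\to 0$ by absolute continuity of the integral for general $\varphi\in L^1(\RN)$ (the $O(\lambda^{-N})$ rate only holds for bounded $\varphi$), and one should note that $\norm{u_0}_{L^\infty(\Omega)}=1$ survives this modification because $\omega(u_0)$ contains constants of modulus arbitrarily close to $1$, which forces $\norm{u_0}_{L^\infty(\abs{y}\geq R)}\geq 1$ for every $R$. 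Neither affects the validity of the argument.
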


Prior to prove the result we will prove the following auxiliary lemma. We will use the notation $B_R\defeq B(0,R)$.

\begin{lemma}
\label{lemma:predowhatyouwant}
Given $\varepsilon>0$, $x_0\in \RN$, $T>0$ and $R>0$, there exists $t>T$ and $\tilde{R}>R$ such that
\begin{equation}
\int_{B_R}k_{\RN}(x_0,y,t) \,  dy +\int_{\RN\backslash
  B_{\tilde{R}}} k_\RN(x_0,y,t) \,  dy  \leq \varepsilon
\end{equation}
or equivalently
\begin{equation}
\int_{B_{\tilde{R}}\backslash B_{R}} k_\RN(x_0,y,t) \,  dy \geq 1-\varepsilon
. 
\end{equation}
\end{lemma}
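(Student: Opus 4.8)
The plan is to exploit the single fact that the Gaussian integrates to one, namely $\int_\RN k_\RN(x_0,y,t)\,dy = 1$ for every $t>0$, and then to make the two regions outside the annulus $B_{\tilde{R}}\setminus B_R$ carry little mass by a suitable choice of parameters, carried out in the correct order.

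First I would handle the inner ball $B_R$, which is a fixed bounded set. Using the trivial pointwise bound $k_\RN(x_0,y,t)\le (4\pi t)^{-N/2}$ one gets
\begin{equation}
\int_{B_R}k_\RN(x_0,y,t)\,dy \le \frac{\abs{B_R}}{(4\pi t)^{N/2}},
\end{equation}
which tends to $0$ as $t\to\infty$. Hence I can fix one value $t>T$ large enough that this inner integral is at most $\varepsilon/2$. This is the manifestation of the mass spreading out to infinity: on any fixed compact set the Gaussian eventually deposits negligible mass.

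With this $t$ now frozen, I would turn to the outer tail. Since $k_\RN(x_0,\cdot,t)\in L^1(\RN)$ with total mass one, the tail over the complement of a large ball satisfies
\begin{equation}
\int_{\RN\setminus B_{\tilde{R}}} k_\RN(x_0,y,t)\,dy \longrightarrow 0 \qquad \text{as } \tilde{R}\to\infty,
\end{equation}
either by dominated convergence or by an explicit Gaussian tail estimate at the already-fixed time $t$. Thus for this fixed $t$ I can choose $\tilde{R}>R$ large enough that the outer integral is also at most $\varepsilon/2$, and adding the two bounds yields the first displayed inequality.

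Finally, the equivalent formulation is immediate from mass conservation: since $\int_\RN k_\RN(x_0,y,t)\,dy=1$ we have
\begin{equation}
\int_{B_{\tilde{R}}\setminus B_R} k_\RN(x_0,y,t)\,dy = 1 - \int_{B_R}k_\RN(x_0,y,t)\,dy - \int_{\RN\setminus B_{\tilde{R}}} k_\RN(x_0,y,t)\,dy \ge 1-\varepsilon.
\end{equation}
There is no genuine obstacle in this argument; the only point deserving care is the order of the choices. One must fix $t>T$ first so that the inner mass is small, and only then select $\tilde{R}$ depending on that $t$, because the tail smallness is guaranteed only once the time is frozen — a uniform-in-$t$ tail bound would fail since the Gaussian spreads without bound.
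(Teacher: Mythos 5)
Your argument is correct and is essentially identical to the paper's: first fix $t>T$ large so that the inner integral over $B_R$ is at most $\varepsilon/2$ (the paper phrases this as uniform decay of the kernel, which is exactly your pointwise bound $(4\pi t)^{-N/2}$ times $\abs{B_R}$), then for that fixed $t$ use integrability of the Gaussian to choose $\tilde{R}$ making the tail at most $\varepsilon/2$. Your remark about the order of the choices matches the paper's proof as well, so there is nothing to add.
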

\begin{proof}
  As $k_\RN(x_0,y,t)\to 0$ uniformly in $y\in\RN$ when $t\to \infty$,
  one can find $t\geq T$ such that
  $\int_{B(0,R)}k_{\RN}(x_0,y,t)\leq \varepsilon/2$. Now, for that
  $t\geq T$, as $k_\RN(x_0,y,t)$ is integrable in $\RN$, we can find
  $\tilde{R}\geq R$ large enough such that
  $\int_{\RN\backslash B_{\tilde{R}}}k_{\RN}(x_0,y,t)\leq
  \varepsilon/2$ and the result is proved.
\end{proof}

\begin{proof}[Proof of Theorem \ref{thm:dowhatyouwant}]
First of all, without loss of generality we can assume that the
sequence is alternating, in the sense that $a_{2n-1}\leq a_{2n} \geq
a_{2n+1}$ for every $n\in \mathbb{N}$. If not, we add additional terms
into the sequence to make it alternating, prove the result in such a
case, and then get  the sequence of times  
for the corresponding subsequence. In the case of the alternating sequence, we
just need to prove that there exists a sequence of times such that
$S^\theta(t_{2n-1})u_0(x_0)\leq a_{2n-1} \Phi^\theta(x_0)$ and
$S^\theta(t_{2n})u_0(x_0)\geq a_{2n} \Phi^\theta(x_0)$, since by continuity
there exists $t\in [t_{2n-1},t_{2n}]$ such that $S^\theta(t)u_0(x_0)= 
a_{2n-1} \Phi^\theta(x_0)$ and  $s\in [t_{2n},t_{2n+1}]$ such
that $S^\theta(s)u_0(x_0)= a_{2n} \Phi^\theta(x_0)$. 
	
Let us construct $u_0$ by induction. By Theorem \ref{thm:dataLinfty_Dirichlet}, we
know that $\abs{S^\theta(t)u_0-\Phi^\theta(x)S_\RN(t)u_0}\leq f(t)$ where $f(t)$
is a monotonically decreasing function such that
$\lim_{t\to\infty}f(t)=0$. For convenience let us call
$g(t)=(\Phi^\theta(x_0))^{-1}f(t)$. The initial data $u_0$ is  the sum
of characteristic functions of annuli, defined as   
	\begin{equation}
		u_0(x):=\sum_{n=1}^\infty \chi_{B_{R_{2n}}\backslash B_{R_{2n-1}}}(x)
	\end{equation}
	where we will determine the radii $R_{n}$ by induction.
	 First, we choose $T>0$ large enough such that
         $\varepsilon=a_1-g(T)>0$ and $R_{0}>0$ such that $\hole\subset
         B(0,R_{0}) = B_{R_{0}}$. Hence, by Lemma \ref{lemma:predowhatyouwant}, there
         exists $t_1>T$ and $R_1>R_{0}$ such that 
	\begin{equation}
		\int_{\Omega\backslash B_{R_1}} k_\RN(x_0,y,t_1) \,
                dy  \leq a_1-g(t_1) .
	\end{equation}
	Now, choose  $T\geq t_1$ large enough such that $\varepsilon=1-a_2-g(T)>0$. Hence, by Lemma \ref{lemma:predowhatyouwant}, there exists $t_2>T$ and $R_2>R_{1}$ such that
\begin{equation}
	\int_{B_{R_2}\backslash B_{R_1}} k_\RN(x_0,y,t_2) \,  dy
        \geq a_2+g(t_2) . 
\end{equation}

Now we proceed by induction. Given $R_{2n}$ and $t_{n}$, we choose
$T\geq t_{2n}$ large enough such that
$\varepsilon=a_{2n+1}-g(T)>0$. Hence, by Lemma
\ref{lemma:predowhatyouwant}, there exists $t_{2n+1}>T$ and
$R_{2n+1}> R_{2n}$ such that
	\begin{equation}
		\label{eqn:dowhatyouwanteq1}
		\int_{B_{R_{2n}}}k_{\RN}(x_0,y,t_{2n+1}) \,  dy
                +\int_{\RN\backslash B_{R_{2n+1}}}
                k_\RN(x_0,y,t_{2n+1}) \,  dy \leq a_{2n+1}-g(t_{2n+1}) .
	\end{equation}
	Now, choose $T\geq t_{2n+1}$ large enough such that
        $\varepsilon=1-a_{2n+2}-g(T)>0$. Hence, by Lemma
        \ref{lemma:predowhatyouwant}, there exists $t_{2n+2}>T$ and
        $R_{2n+2}> R_{2n+1}$ such that 
	\begin{equation}
		\label{eqn:dowhatyouwanteq2}
	\int_{B_{R_{2n+2}}\backslash B_{R_{2n+1}}}
        k_\RN(x_0,y,t_{2n+2}) \,  dy \geq a_{2n+2}+g(t_{2n+2}) .
	\end{equation}

	Let us prove that $u_0$ satisfies the desired property for the sequence of times $\{t_n\}_{n\in\mathbb{N}}$.
\begin{equation}
	\label{eqn:dowhatyouwanteq3}
	\begin{aligned}
		S_\RN & (t_{2n+1}) u_0(x_0)  =\sum_{m=1}^\infty
                \int_{B_{R_{2m}}\backslash B_{R_{2m-1}}}
                k_\RN(x_0,y,t_{2n+1}) \, dy \\
		&\leq \int_{B_{R_{2n}}} k_{\RN}(x_0,y,t_{2n+1}) \,
                dy +\int_{\RN\backslash B_{R_{2n+1}}}
                k_\RN(x_0,y,t_{2n+1}) \,  dy
                \myleq{\eqref{eqn:dowhatyouwanteq1}}
                a_{2n+1}-g(t_{2n+1}) .
	\end{aligned}
\end{equation}
Therefore, by Theorem \ref{thm:dataLinfty_Dirichlet},
\begin{equation}
	S^\theta(t_{2n-1})u_0(x_0)\leq \Phi^\theta(x_0)
        S_\RN(t_{2n-1})u_0(x_0)+\Phi^\theta(x_0)g(t_{2n-1})\myleq{\eqref{eqn:dowhatyouwanteq3}}
        a_{2n-1} . 
\end{equation}
In the same way,
\begin{equation}
	\label{eqn:dowhatyouwanteq4}
	\begin{aligned}
		S_\RN(t_{2n})u_0(x_0) & =\sum_{m=1}^\infty \int_{B_{R_{2m}}\backslash B_{R_{2m-1}}}k_\RN(x_0,y,t_{2n}) \\
		& \geq \int_{B_{R_{2n}}\backslash B_{R_{2n-1}}}k_\RN(x_0,y,t_{2n})\mygeq{\eqref{eqn:dowhatyouwanteq2}} a_{2n}+g(t_{2n})
	\end{aligned}
\end{equation}
so then, by Theorem \ref{thm:dataLinfty_Dirichlet},
\begin{equation}
	S^\theta(t_{2n})u_0(x_0)\geq \Phi^\theta(x_0) S_\RN(t_{2n})u_0(x_0)+\Phi^\theta(x_0)g(t_{2n})\mygeq{\eqref{eqn:dowhatyouwanteq4}} a_{2n}
\end{equation}
as we wanted to prove.
\end{proof}

\section{Asymptotic behaviour for  initial data  in $L^p(\Omega), \ 1<p<\infty$}
\label{sec:initial_data_Lp}

In this section, we will study the heat equation with some homogeneous
$\theta$-boundary conditions in an exterior domain in the case in which the
initial data is in $L^p(\Omega)$ with $1<p<\infty$. This is the
simpler case because all the solutions decay to $0$ in
$L^{p}(\Omega)$.

\begin{proposition}
\label{prop:lpasym}

Let $S^\theta(t)$ be 
the solution semigroup of contractions of  the heat equation with homogeneous $\theta$-boundary
conditions on $\partial \Omega$.  

Then for any $1<p<\infty$ and  $u_0\in L^p(\Omega)$ 
\begin{equation}
\lim_{t\to\infty}\norm{S^\theta(t)u_0}_{L^p(\Omega)}=0 
\end{equation}	
and  for any $q$ such that $p< q \leq \infty$,
\begin{equation}
\lim_{t\to\infty}t^{\frac{N}{2}(\frac{1}{p}-\frac{1}{q})} \norm{S^\theta(t)u_0}_{L^q(\Omega)}=0.
\end{equation}
\end{proposition}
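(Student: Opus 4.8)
The plan is to derive both statements from the $L^p$--$L^q$ smoothing estimate of Corollary~\ref{cor:LpLq_estimates} together with the fact that $S^\theta(t)$ is a contraction on every $L^p(\Omega)$, combined through a routine density argument. The guiding observation is that the smoothing estimate \emph{already} produces decay in time whenever the initial datum lies in a strictly more integrable Lebesgue space, and the hypothesis $1<p<\infty$ is precisely what makes the relevant smoothing exponent strictly positive.

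For the first assertion I would first treat data in the dense subspace $L^1(\Omega)\cap L^p(\Omega)$. For such a $w$, applying Corollary~\ref{cor:LpLq_estimates} with source space $L^1(\Omega)$ and target space $L^p(\Omega)$ gives
\[
\norm{S^\theta(t)w}_{L^p(\Omega)}\leq \frac{C}{t^{\frac{N}{2}(1-\frac{1}{p})}}\norm{w}_{L^1(\Omega)}\longrightarrow 0,
\]
since $\frac{N}{2}(1-\frac{1}{p})>0$ for $p>1$. Then, given an arbitrary $u_0\in L^p(\Omega)$ and $\varepsilon>0$, I would split $u_0=w+z$ with $w\in L^1(\Omega)\cap L^p(\Omega)$ (for instance a compactly supported bounded function) and $\norm{z}_{L^p(\Omega)}<\varepsilon$, which is possible by density. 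Using the triangle inequality and the contraction property of the semigroup,
\[
\norm{S^\theta(t)u_0}_{L^p(\Omega)}\leq \norm{S^\theta(t)w}_{L^p(\Omega)}+\norm{z}_{L^p(\Omega)},
\]
so that $\limsup_{t\to\infty}\norm{S^\theta(t)u_0}_{L^p(\Omega)}\leq\varepsilon$, and letting $\varepsilon\to 0$ yields the claim.

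For the second assertion I would exploit the semigroup property by writing $S^\theta(t)=S^\theta(t/2)S^\theta(t/2)$ and applying Corollary~\ref{cor:LpLq_estimates} to the outer factor, from $L^p(\Omega)$ to $L^q(\Omega)$, to get
\[
\norm{S^\theta(t)u_0}_{L^q(\Omega)}\leq \frac{C}{(t/2)^{\frac{N}{2}(\frac{1}{p}-\frac{1}{q})}}\norm{S^\theta(t/2)u_0}_{L^p(\Omega)},
\]
hence
\[
t^{\frac{N}{2}(\frac{1}{p}-\frac{1}{q})}\norm{S^\theta(t)u_0}_{L^q(\Omega)}\leq C\,2^{\frac{N}{2}(\frac{1}{p}-\frac{1}{q})}\norm{S^\theta(t/2)u_0}_{L^p(\Omega)}.
\]
The right-hand side tends to $0$ as $t\to\infty$ by the first assertion applied at time $t/2$, which proves the weighted $L^q$ decay for every $p<q\leq\infty$.

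There is no serious obstacle here; the argument is entirely soft. The only point requiring care is making sure the gain of integrability used in the first step is genuinely available, that is, choosing an approximating space ($L^1(\Omega)\cap L^p(\Omega)$) that is simultaneously dense in $L^p(\Omega)$ and strictly more integrable, so that the smoothing exponent $\frac{N}{2}(1-\frac{1}{p})$ is strictly positive. This is exactly where the hypothesis $p>1$ enters, and it explains why the endpoint cases $p=1$ and $p=\infty$, where no such decay holds, must be treated separately.
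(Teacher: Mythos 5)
Your proposal is correct and follows essentially the same route as the paper: both rest on the $L^p$--$L^q$ smoothing estimate of Corollary~\ref{cor:LpLq_estimates} together with a density argument approximating $u_0$ by elements of $L^1(\Omega)\cap L^p(\Omega)$ and using the contraction property on the remainder. The only (cosmetic) difference is in the second assertion, where the paper runs the same density decomposition directly in $L^q(\Omega)$ with the weight $t^{\frac{N}{2}(\frac{1}{p}-\frac{1}{q})}$, applying the $L^1\to L^q$ estimate to the approximant, whereas you factor $S^\theta(t)=S^\theta(t/2)S^\theta(t/2)$ and reduce to the $L^p$ decay already proved; both are valid.
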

\begin{proof}
  The proof follows  by approximation. Given $\varepsilon>0$, by
  the density of $L^1(\Omega)\cap L^p(\Omega)$ in $L^p(\Omega)$ there
  exists a $u_0^\varepsilon\in L^1(\Omega)\cap L^p(\Omega)$ such that 
	\begin{equation}
		\norm{u_0-u_0^\varepsilon}_{L^p(\Omega)}\leq \varepsilon.
	\end{equation}
	Then, using Corollary \ref{cor:LpLq_estimates} we obtain, for
        $p\leq q \leq \infty$, 
	\begin{equation}
		\begin{aligned}
			t^{\frac{N}{2}(\frac{1}{p}-\frac{1}{q})}\norm{S^\theta(t)u_0}_{L^q(\Omega)}  & \leq t^{\frac{N}{2}(\frac{1}{p}-\frac{1}{q})}\left(\norm{S^\theta(t)u_0^\varepsilon}_{L^q(\Omega)}+\norm{S^\theta(t)(u_0-u_0^\varepsilon)}_{L^q(\Omega)}\right) \\ & \myleq{\eqref{eqn:LpLq_estimates_theta}}Ct^{\frac{N}{2}(\frac{1}{p}-1)}\norm{u_0^\varepsilon}_{L^1(\Omega)}+C\norm{u_0-u_0^\varepsilon}_{L^p(\Omega)} \\
			& \myleq{\phantom{\eqref{eqn:LpLq_estimates_theta}}} Ct^{\frac{N}{2}(\frac{1}{p}-1)}\norm{u_0^\varepsilon}_{L^1(\Omega)}+C\varepsilon \longrightarrow C\varepsilon
		\end{aligned}		
\end{equation}
when $t\to\infty$. Hence, as $\varepsilon$ was arbitrary, we have that
$\lim_{t\to\infty}t^{\frac{N}{2}(\frac{1}{p}-\frac{1}{q})}\norm{S^\theta(t)u_0}_{L^p(\Omega)}=0$.
 \end{proof}

\begin{remark}
  A formal explanation of  the decay to zero in $L^p(\Omega)$ for
  $1<p<\infty$ is the
  following.  Assume the initial data and hence the solution of the
  heat equation are nonnegative. If we multiply  in \eqref{eq:heat_theta}  by
  $u^{p-1}$ we obtain 
  \begin{equation}
    \frac{1}{p}\frac{d}{dt}\int_\Omega u^p =- C_p\int_\Omega u^{p-2}
    \abs{\nabla u}^2 + \frac{1}{p}\int_{\partial\Omega}\frac{\partial u^p}{\partial n},
  \end{equation}
where $C_{p}>0$. 
On the other hand, for $p=1$,
integrating the equation in $\Omega$, we obtain 
\begin{equation} \label{eq:loss_of_mass_L1}
	\frac{d}{dt}\int_\Omega u =\int_\Omega \Lap u 
        =\int_{\partial \Omega} \frac{\partial u}{\partial n} 
\end{equation}

So, as  we consider nonnegative solutions  $u\geq 0$ with, say,
Dirichlet boundary conditions,  we have
$\restr{u}{\partial \Omega}=0$ and then 
$\frac{\partial u}{\partial n} \leq 0$ on $\partial \Omega$ and then
$\D \int_\Omega u(x,t)dx$ and  $\D \int_\Omega u^{p}(x,t)dx$ decrease
in time. 

However, for the latter,   we can see a difference with respect to
\eqref{eq:loss_of_mass_L1} 
as  an additional decay term in $\Omega$ appears. Thus the equation is more
  dissipative in $L^p(\Omega)$ than in $L^1(\Omega)$.   
\end{remark}

It turns out that the decay in the
$L^p(\Omega)$ norm can be arbitrarily slow, as the following Lemma based on \cite{souplet1999geometry} shows.
\begin{lemma}
\label{lemma:souplet}
Let $g\in C([0,\infty))$ such that
$\lim_{t\to\infty}g(t)=0$. Then there exists an initial datum
$0\leq u_0\in L^p(\Omega)$ with $\|u_{0}\|_{L^p(\Omega)}=1$, and $T>0$ such that
	\begin{equation}
		\label{eqn:soupleteq1}
		\norm{S^\theta(t)u_0}_{L^p(\Omega)}\geq g(t) \qquad \forall t\geq T.
	\end{equation}

      \end{lemma}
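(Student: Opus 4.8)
The plan is to build $u_0$ as an infinite superposition of nonnegative bumps supported on pairwise disjoint balls $B_n=B(x_n,\rho_n)\subset\Omega$, taking each bump to be the $L^p(\Omega)$-normalized first Dirichlet eigenfunction $\psi_n$ of $B_n$ (extended by zero). This choice serves two purposes: disjoint supports make $\norm{u_0}_{L^p(\Omega)}^p$ equal to the sum of the $p$-th powers of the coefficients, and eigenfunctions yield an exact exponential lower bound for the evolution on each ball. Indeed, since enlarging the domain increases the Dirichlet heat kernel and, by \eqref{eq:comparison_kernels_theta}, the $\theta$-kernel dominates the Dirichlet one ($\theta\equiv 0$ being the smallest), for $x\in B_n$ one has $k^\theta_\Omega(x,y,t)\ge k^{\mathrm{Dir}}_{B_n}(x,y,t)$; as $\psi_n\ge 0$ is supported in $B_n$, this gives $S^\theta(t)\psi_n(x)\ge e^{-\lambda_1(B_n)t}\psi_n(x)$ on $B_n$, whence $\norm{S^\theta(t)\psi_n}_{L^p(\Omega)}\ge e^{-\lambda_1(B_n)t}$, where $\lambda_1(B_n)=\lambda_1(B(0,1))\rho_n^{-2}$ by scaling. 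Thus, by enlarging $\rho_n$, a single normalized bump retains at least half of its $L^p$ norm up to any prescribed time.

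Next I would reduce the statement, which must hold for all $t\ge T$, to a countable family of inequalities. Because $S^\theta(t)$ is a semigroup of contractions on $L^p(\Omega)$, the function $v(t):=\norm{S^\theta(t)u_0}_{L^p(\Omega)}$ is nonincreasing. Replacing $g$ by $G(t):=\sup_{s\ge t}\abs{g(s)}$, which is nonincreasing, tends to $0$ and dominates $g$, it then suffices to produce times $T=t_0<t_1<t_2<\cdots\to\infty$ with $v(t_n)\ge G(t_{n-1})$: for $t\in[t_{n-1},t_n]$ this yields $v(t)\ge v(t_n)\ge G(t_{n-1})\ge G(t)\ge g(t)$, and these intervals exhaust $[T,\infty)$.

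The remaining work is the bookkeeping of the induction. Since the bumps are nonnegative and $S^\theta(t)$ preserves order, $S^\theta(t)u_0\ge c_nS^\theta(t)\psi_n\ge 0$ pointwise, so $v(t)\ge c_n\norm{S^\theta(t)\psi_n}_{L^p(\Omega)}\ge c_n e^{-\lambda_1(B_n)t}$. I would arrange that at time $t_n$ the $n$-th bump carries the lower bound: choose $T=t_0$ with $(2G(T))^p\le 1/2$; given $t_{n-1}$, set $c_n:=2G(t_{n-1})$, pick $t_n\ge t_{n-1}+1$ so large that $(2G(t_n))^p\le 2^{-(n+1)}$ (possible since $G\to 0$), pick $\rho_n$ so large that $\lambda_1(B_n)t_n\le\log 2$, i.e. $e^{-\lambda_1(B_n)t_n}\ge 1/2$, and finally place $x_n$ so that $B_n\subset\Omega$ is disjoint from $B_1,\dots,B_{n-1}$, which is feasible because $\Omega$ is unbounded. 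Then $v(t_n)\ge c_n e^{-\lambda_1(B_n)t_n}\ge c_n/2=G(t_{n-1})$. The smallness requirements give $\sum_n c_n^p\le 1/2+\sum_{n\ge 1}2^{-(n+1)}=1$, so $u_0=\sum_n c_n\psi_n$ is a nonnegative element of $L^p(\Omega)$ with $\norm{u_0}_{L^p(\Omega)}\le 1$; dividing by its norm makes it exactly $1$ and can only increase $v$, so all inequalities persist.

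The delicate point is the ordering of the three interlocking choices in the induction: the coefficient $c_n$ is forced by $G(t_{n-1})$, yet one must still be free to push $t_n$ far enough that $\sum_n c_n^p$ converges and to enlarge $\rho_n$ so that the $n$-th bump has not dissipated by time $t_n$. That these demands are compatible rests precisely on the exact retention estimate $\norm{S^\theta(t)\psi_n}_{L^p(\Omega)}\ge e^{-\lambda_1(B_n)t}$ together with the freedom to let $\rho_n$, and hence the separation of the balls, grow without constraint inside the unbounded domain $\Omega$.
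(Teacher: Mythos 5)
Your proof is correct, but it takes a genuinely different route from the paper. The paper disposes of this lemma in two lines: it invokes Proposition 3.3(iv) of \cite{souplet1999geometry} for the Dirichlet case and then transfers the bound to general $\theta$-boundary conditions via the kernel monotonicity \eqref{eqn:neugeqdir2}. You instead give a self-contained construction: disjoint $L^p$-normalized first Dirichlet eigenfunctions on balls $B(x_n,\rho_n)\subset\Omega$ placed far from the hole, the retention estimate $\norm{S^\theta(t)\psi_n}_{L^p(\Omega)}\geq e^{-\lambda_1(B_n)t}$ obtained from domain monotonicity of the Dirichlet kernel combined with \eqref{eq:comparison_kernels_theta}, the reduction to a countable family of inequalities via monotonicity of $t\mapsto\norm{S^\theta(t)u_0}_{L^p(\Omega)}$, and the three-way induction on $c_n$, $t_n$, $\rho_n$ with $\sum_n c_n^p\leq 1$. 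This is essentially a reconstruction of Souplet's own argument (which likewise rests on the geometry of unbounded domains admitting arbitrarily large balls), so in spirit you are reproving the cited black box rather than deviating from it; the interlocking choices in your induction are ordered correctly and the scaling $\lambda_1(B(0,\rho))=\lambda_1(B(0,1))\rho^{-2}$ does let you make each bump survive to any prescribed time. What the paper's proof buys is brevity and a clean separation of concerns (the hard work lives in the reference); what yours buys is a transparent, verifiable mechanism for the arbitrarily slow decay and a direct treatment of all $\theta$-boundary conditions in one stroke, since the comparison $k^\theta_\Omega\geq k^0_\Omega\geq k^{\mathrm{Dir}}_{B_n}$ is exactly the same tool the paper uses to pass from Dirichlet to general $\theta$. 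The only cosmetic caveat is the degenerate case $G\equiv 0$ on $[T,\infty)$, where all $c_n$ vanish and one cannot normalize; there the claim is trivial for any normalized nonnegative datum, so nothing is lost.
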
 
\begin{proof}
  The proof of this result can be found in \cite{souplet1999geometry}
  Proposition 3.3 iv) for  homogeneous Dirichlet
  boundary conditions. For other $\theta$-boundary conditions, 
  using \eqref{eqn:neugeqdir2}, 
\begin{equation}
\norm{S^\theta(t)u_0}_{L^p(\Omega)}\mygeq{\eqref{eqn:neugeqdir2}}
\norm{S^0(t)u_0}_{L^p(\Omega)}\mygeq{\cite{souplet1999geometry}}g(t)
\qquad \forall t\geq T. 
\end{equation}
\end{proof}

\section*{Acknowledgments}

The authors want to acknowledge fruitful discussions with F. Quirós, 
J. A. Cañizo and A. Garriz,  as well as the hospitality from  the Institute of Mathematics of the
University of Granada (IMAG). 

\appendix

\begin{appendices}

      \section{Schauder Estimates}
      Here we present some parabolic Schauder estimates, which allow
      us to estimate the derivatives of a solution of the heat
      equation just with the $L^\infty$ norm of the solutions. These
      are classical results which can be found, for example, in
      \cite{friedman2008partial} Chapter 3 Theorem 5.
            \begin{theorem}
	\label{thm:schauderest}
	Let $K\subset \RN$ a domain, $Q\defeq K\times [T_1,T_2]$ and
        $v\in L^\infty(Q)\cap C^{\infty}(Q)$ be a solution of the heat equation. Define,
        for any $(x,t)\in Q$ the parabolic distance
        $d_{(x,t)}=\inf\{(\abs{x-\bar{x}}^2+\abs{t-\bar{t}})^{1/2} \ :
        \ (\bar{x},\bar{t})\in \partial Q \backslash\{(x,T_2):x\in
        \Omega\} \}$. Then,
	\begin{equation}
          d_{(x,t)}\abs{Dv(x,t)}+d^2_{(x,t)}\abs{D^2v(x,t)}\leq C\norm{v}_{L^\infty(Q)} \qquad\forall (x,t)\in Q,
	\end{equation}
	where $C$ is independent of $v$, $x$ $t$, $K$, $T_1$ and
        $T_2$, $Dv$ represent any first order spatial derivative of
        $v$ and $D^2v$ any second order spatial derivative of $v$.
      \end{theorem}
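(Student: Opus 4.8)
The plan is to exploit the scale invariance of the heat equation and reduce the weighted pointwise bound to a single normalized estimate on a unit parabolic cylinder. First I would clarify the geometric meaning of the parabolic distance. Writing $Q_r(x,t)\defeq B_r(x)\times(t-r^2,t]$ for the backward parabolic cylinder with vertex $(x,t)$, the fact that $d_{(x,t)}$ is computed over $\partial Q$ \emph{with the top face $\{t=T_2\}$ removed} is exactly what is needed: since the heat equation propagates information forward in time, $v(x,t)$ and its derivatives are governed by the values of $v$ on a backward-in-time cylinder, which never reaches above time $t$. Computing the infimum defining $d_{(x,t)}$ over the lateral and bottom parts of $\partial Q$ gives $d_{(x,t)}=\min\{\operatorname{dist}(x,\partial K),\sqrt{t-T_1}\}$, and consequently $Q_r(x,t)\subset Q$ for every $r\le d_{(x,t)}$.

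Next I would establish the normalized interior estimate: there is a constant $C=C(N)$ so that every solution $w$ of the heat equation on $Q_1(0,0)=B_1\times(-1,0]$ satisfies
\begin{equation}
\abs{Dw(0,0)}+\abs{D^2w(0,0)}\le C\norm{w}_{L^\infty(Q_1(0,0))}.
\end{equation}
This is the classical interior estimate for the homogeneous heat equation; it is the content of \cite{friedman2008partial}, Chapter 3, Theorem 5, specialized to zero right-hand side. If one prefers a self-contained argument, one uses that each spatial derivative $\partial_i w$ again solves the heat equation, so a Caccioppoli energy inequality on nested cylinders bounds $\norm{\nabla w}_{L^2}$ and, after iterating over higher derivatives, $\norm{D^\beta w}_{L^2}$ on a smaller cylinder by $\norm{w}_{L^2}$; a parabolic Sobolev embedding then upgrades enough $L^2$ derivative bounds to pointwise control of $Dw$ and $D^2w$ at the vertex, while $\norm{w}_{L^2(Q_1)}\le C\norm{w}_{L^\infty(Q_1)}$ closes the estimate. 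The crucial point is that the resulting constant depends on the dimension $N$ alone.

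Finally I would undo the scaling. Fix $(x,t)\in Q$, set $r=d_{(x,t)}$, and define $w(y,s)\defeq v(x+ry,\,t+r^2s)$ on $Q_1(0,0)$; then $w$ solves the heat equation, and since $Q_r(x,t)\subset Q$ we have $\norm{w}_{L^\infty(Q_1(0,0))}\le\norm{v}_{L^\infty(Q)}$. Because $Dv(x,t)=r^{-1}Dw(0,0)$ and $D^2v(x,t)=r^{-2}D^2w(0,0)$, the normalized estimate gives $\abs{Dv(x,t)}\le C r^{-1}\norm{v}_{L^\infty(Q)}$ and $\abs{D^2v(x,t)}\le C r^{-2}\norm{v}_{L^\infty(Q)}$; multiplying the first by $d_{(x,t)}=r$ and the second by $d_{(x,t)}^2=r^2$ and adding yields the claimed inequality with a constant depending only on $N$, hence independent of $v$, $x$, $t$, $K$, $T_1$ and $T_2$.

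I expect the main obstacle to be the normalized interior derivative estimate of the second step, since obtaining pointwise control of $Dw$ and $D^2w$ purely from the $L^\infty$ norm of $w$ requires genuine regularity theory (either the cited Schauder estimate or the energy/Sobolev bootstrap), and one must take care that the constant there is dimensional only; the accompanying conceptual subtlety is recognizing that the exclusion of the top face in $d_{(x,t)}$ is precisely the parabolic domain-of-dependence feature that makes the backward cylinders $Q_r(x,t)$ fit inside $Q$.
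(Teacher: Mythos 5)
Your proposal is correct. Note, however, that the paper does not actually prove this statement: Theorem \ref{thm:schauderest} is quoted verbatim as a classical result with a pointer to Friedman, Chapter 3, Theorem 5, and no argument is given. What you supply is the standard derivation of the weighted (distance-to-the-parabolic-boundary) form from a single normalized interior estimate on the unit backward cylinder, via the parabolic scaling $w(y,s)=v(x+ry,t+r^2s)$ with $r=d_{(x,t)}$. Your identification $d_{(x,t)}=\min\{\operatorname{dist}(x,\partial K),\sqrt{t-T_1}\}$ is right, the inclusion $Q_r(x,t)\subset Q$ for $r\le d_{(x,t)}$ follows, and the bookkeeping $Dv(x,t)=r^{-1}Dw(0,0)$, $D^2v(x,t)=r^{-2}Dw(0,0)$ closes the estimate; the degenerate case $d_{(x,t)}=0$ is trivial. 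Your proof has the added value of making transparent the two points the paper leaves implicit: why the constant can be taken independent of $K$, $T_1$, $T_2$ (it is purely dimensional, coming from the unit-cylinder estimate), and why the top face must be excised from $\partial Q$ in the definition of $d_{(x,t)}$ (so that the backward cylinders, which are what the domain of dependence of the heat equation actually requires, fit inside $Q$). The only remaining content is the normalized estimate itself, which you correctly flag as the genuine analytic input; either the citation to Friedman or your sketched Caccioppoli--Sobolev bootstrap (using that spatial derivatives of caloric functions are caloric) is an acceptable way to discharge it.
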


  \section{Young's convolution inequality for Lorentz spaces}
\label{app:young}
  We state the definition of Lorentz spaces as well as some
  properties. We let the reference \cite{grafakos} for more
  information about these spaces.

\begin{definition}
Let $(X, \Sigma, \mu)$ be a measure space. For $0 < p < \infty$ and $0
< q \leq \infty$, the Lorentz space $L^{p,q}(X)$ is defined as the
space of measurable functions $f: X \rightarrow \mathbb{C}$ satisfying
the following condition: 
\[
\|f\|_{L^{p,q}(X)}^p := \begin{cases}
 \displaystyle \left(p\int_0^\infty t^{q - 1} \left(\mu(\{x : |f(x)| > t\})\right)^{q/p} dt\right)^{1/q}, & \text{if } q < \infty,\\
\displaystyle \sup_{t > 0} \left( t^p \mu\{x : |f(x)| > t\}\right), & \text{if } q = \infty,
\end{cases}
 \]
 where $\mu$ is the measure on $X$. For convention $L^{\infty,\infty}(X,\mu)=L^{\infty}(X,\mu)$.
 \end{definition}

 As a consequence of the definition
\begin{proposition}
\label{prop:lorentzprop}

We have the following properties
\begin{enumerate}
\item If $1\leq p \leq \infty$ we have $L^{p,p}(X)=L^p(X)$.
\item If $q\leq r$ then $L^{p,q}(X)\subset L^{p,r}(X)$ continuously.
\end{enumerate}
\end{proposition}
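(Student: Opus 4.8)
The plan is to establish both inclusions directly from the definition, since these are classical facts about Lorentz spaces that may also be found in \cite{grafakos}. For part (i) the idea is the layer-cake (Cavalieri) formula. For $1\leq p<\infty$, Fubini's theorem applied to the region under the graph of $|f|^p$ gives
\[
  \int_X |f|^p\, d\mu = p\int_0^\infty t^{p-1}\,\mu(\{x:|f(x)|>t\})\, dt .
\]
On the other hand, taking $q=p$ in the definition makes the exponent $q/p$ equal to $1$, so the integrand defining $\|f\|_{L^{p,p}}$ is precisely $t^{p-1}\mu(\{|f|>t\})$. Comparing the two expressions identifies $L^{p,p}$ with $L^p$ (with equal norms under the standard normalization); the case $p=\infty$ is the stated convention $L^{\infty,\infty}=L^\infty$.

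For part (ii), I would pass to the decreasing rearrangement $f^*$ of $|f|$ and exploit its monotonicity. A change of variables relating the distribution function to $f^*$ rewrites the quasinorms (up to harmless constants) as
\[
  \|f\|_{L^{p,q}}\simeq\left(\int_0^\infty\big(s^{1/p}f^*(s)\big)^q\frac{ds}{s}\right)^{1/q},
  \qquad
  \|f\|_{L^{p,\infty}}\simeq\sup_{s>0} s^{1/p}f^*(s).
\]
The first step is the endpoint estimate $\|f\|_{L^{p,\infty}}\leq C\|f\|_{L^{p,q}}$: since $f^*$ is non-increasing, $f^*(s)\leq f^*(u)$ for $u\leq s$, and using $\tfrac{q}{p}\int_0^s u^{q/p}\tfrac{du}{u}=s^{q/p}$ one gets $s^{1/p}f^*(s)\leq (\tfrac{q}{p}\int_0^s (u^{1/p}f^*(u))^q\tfrac{du}{u})^{1/q}$, which is bounded by a multiple of $\|f\|_{L^{p,q}}$. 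The second step treats $q<r<\infty$ by splitting the integrand as $(s^{1/p}f^*(s))^r=(s^{1/p}f^*(s))^{r-q}(s^{1/p}f^*(s))^q$ and estimating the first factor by $\|f\|_{L^{p,\infty}}^{r-q}$, which gives
\[
  \|f\|_{L^{p,r}}^r\lesssim \|f\|_{L^{p,\infty}}^{\,r-q}\,\|f\|_{L^{p,q}}^q\leq C\,\|f\|_{L^{p,q}}^r
\]
after inserting the endpoint bound; the case $r=\infty$ is the endpoint bound itself, and $q=r$ is trivial. This proves $L^{p,q}\subset L^{p,r}$ continuously.

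The only point requiring care, and the main (mild) obstacle, is the bookkeeping in translating the distribution-function definition used in the statement into the rearrangement formulation, together with tracking the normalizing constants, since the definition in the text carries a prefactor $p$ rather than the customary $q/p$. As only a continuous inclusion is asserted, these constants are immaterial and no sharp constant is needed.
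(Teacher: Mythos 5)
Your proof is correct: part (i) is exactly the layer-cake identity applied to the definition with $q=p$, and part (ii) is the standard nesting argument (first the endpoint bound $\|f\|_{L^{p,\infty}}\leq C\|f\|_{L^{p,q}}$ from the monotonicity of $f^*$, then the split $(s^{1/p}f^*(s))^r=(s^{1/p}f^*(s))^{r-q}(s^{1/p}f^*(s))^q$). The paper itself offers no proof, stating the proposition as an immediate consequence of the definition and referring to \cite{grafakos}, where precisely the argument you give appears; so there is nothing to compare beyond noting that your write-up supplies the standard textbook details the authors chose to omit.
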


We state the following theorem for convolutions when we consider the
Lorentz spaces. Its proof can be found in \cite{convolution} Theorem
2.6.
\begin{theorem}
\label{thm:conv}
Let $f\in L^{p_1,q_1}(\RN)$ and $g\in L^{p_2,q_2}(\RN)$ such that
$\frac{1}{p_1}+\frac{1}{p_2}>1$. Then, the convolution $h=f\ast g$ is
well-defined and, for $p_3\geq 1$ such that 
\begin{equation}
\label{eqn:thmconveq1}
  \frac{1}{p_3}=\frac{1}{p_1}+\frac{1}{p_2}-1
\end{equation}
 and $q_3\geq 1$ such that
\begin{equation}
\label{eqn:thmconveq2}
\frac{1}{q_3}\leq \frac{1}{q_1}+\frac{1}{q_2}
\end{equation}
we have
\begin{equation}
\norm{h}_{L^{p_3,q_3}(\RN)}\leq 3p_3	\norm{f}_{L^{p_1,q_1}(\RN)}\norm{g}_{L^{p_2,q_2}(\RN)}
\end{equation}
\end{theorem}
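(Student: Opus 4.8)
The plan is to reproduce O'Neil's rearrangement approach to convolution in Lorentz spaces, which underlies \cite{convolution} Theorem 2.6. For a measurable function $\phi$ on $\RN$ I write $\phi^*$ for its decreasing rearrangement on $(0,\infty)$ and $\phi^{**}(t)=\frac1t\int_0^t\phi^*(s)\,ds$; since here $p_3>1$ (because $\frac1{p_3}=\frac1{p_1}+\frac1{p_2}-1<1$), I may use the standard equivalence of $\big(\int_0^\infty(t^{1/p}\phi^{**}(t))^{q}\,\frac{dt}{t}\big)^{1/q}$, with the supremum when $q=\infty$, with $\norm{\phi}_{L^{p,q}(\RN)}$. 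The hypothesis $\frac1{p_1}+\frac1{p_2}>1$ enters at the very start: by the embedding $L^{p_i,q_i}\subset L^{p_i,\infty}$ (Proposition \ref{prop:lorentzprop}) one has $f^*(s)\le Cs^{-1/p_1}$ and $g^*(s)\le Cs^{-1/p_2}$, so $f^*g^*$ decays faster than $s^{-1}$ and the tail integral $\int_t^\infty f^*(s)g^*(s)\,ds$ converges; this is exactly what makes O'Neil's bound below finite and guarantees that $h=f\ast g$ is defined for a.e. $x$.

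The core is O'Neil's pointwise rearrangement inequality
\[
 h^{**}(t)\le t\,f^{**}(t)\,g^{**}(t)+\int_t^\infty f^*(s)\,g^*(s)\,ds,\qquad t>0.
\]
To prove it I would fix $t>0$, split $f=f_0+f_1$ by truncating at height $f^*(t)$ (so $f_1$ collects the part of modulus $\le f^*(t)$ and $f_0$ the tall part), split $g=g_0+g_1$ in the same way, and expand $h$ into the four convolutions. The three pieces carrying a bounded factor are controlled by $\norm{u\ast v}_{L^\infty}\le\norm{u}_{L^1}\norm{v}_{L^\infty}$, and the purely integrable piece by $\norm{f_0\ast g_0}_{L^1}\le\norm{f_0}_{L^1}\norm{g_0}_{L^1}$. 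Using $\norm{f_0}_{L^1}\le t\,f^{**}(t)$, $\norm{f_1}_{L^\infty}=f^*(t)$, the analogous bounds for $g$, and integrating the resulting estimate for $h^*$ over $(0,t)$, yields the displayed inequality.

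It remains to insert this bound into the Lorentz norm. Multiplying by $t^{1/p_3}$ and using $1+\frac1{p_3}=\frac1{p_1}+\frac1{p_2}$, the first term becomes $\big(t^{1/p_1}f^{**}(t)\big)\big(t^{1/p_2}g^{**}(t)\big)$, while in the second I factor $f^*(s)g^*(s)=\big(s^{1/p_1}f^*(s)\big)\big(s^{1/p_2}g^*(s)\big)\,s^{-(1+1/p_3)}$. Taking the $L^{q_3}(dt/t)$ norm and applying Hölder with $\frac1{q_3}\le\frac1{q_1}+\frac1{q_2}$ decouples the $f$- and $g$-factors; Hardy's inequality for the averaging operator bounds $t^{1/p_i}f^{**}$ by a constant (depending only on $p_i$) times $t^{1/p_i}f^*$, i.e. by $\norm{f}_{L^{p_i,q_i}}$, and the dual Hardy inequality for $\psi\mapsto t^{1/p_3}\int_t^\infty\psi(s)\,s^{-1/p_3}\,\frac{ds}{s}$ handles the tail term. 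Collecting the constants produces the factor $3p_3$.

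The main obstacle is proving O'Neil's inequality cleanly: the truncation bookkeeping must be arranged so that the $L^1\ast L^\infty$ and $L^1\ast L^1$ estimates recombine into exactly $t\,f^{**}(t)g^{**}(t)+\int_t^\infty f^*g^*$. The second delicate point is the constant, since the factor $p_3$ comes precisely from the dual Hardy inequality, whose operator norm grows like $p_3$; this is why the hypotheses $\frac1{p_3}=\frac1{p_1}+\frac1{p_2}-1$ and $\frac1{q_3}\le\frac1{q_1}+\frac1{q_2}$ must be used exactly where indicated. An alternative route is bilinear real interpolation of the endpoint Young estimates $L^1\ast L^p\to L^p$ and $L^p\ast L^{p'}\to L^\infty$, but extracting the explicit constant $3p_3$ is more delicate along that path.
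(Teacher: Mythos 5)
The paper does not actually prove this theorem but quotes it from \cite{convolution}, Theorem 2.6, and your sketch is a faithful reconstruction of O'Neil's original argument there: the truncation-based rearrangement inequality $h^{**}(t)\le t\,f^{**}(t)\,g^{**}(t)+\int_t^\infty f^*(s)g^*(s)\,ds$, followed by Hardy's inequality and its dual to pass to the Lorentz quasinorms, with the hypothesis $\frac{1}{p_1}+\frac{1}{p_2}>1$ guaranteeing convergence of the tail integral and hence that $h$ is defined a.e. The only details worth making explicit are that the H\"older step under $\frac{1}{q_3}\le\frac{1}{q_1}+\frac{1}{q_2}$ first yields the exponent $\tilde q$ with $\frac{1}{\tilde q}=\frac{1}{q_1}+\frac{1}{q_2}$ and then invokes the embedding $L^{p_3,\tilde q}\subset L^{p_3,q_3}$ of Proposition \ref{prop:lorentzprop}, and that your parenthetical claim $p_3>1$ fails precisely in the degenerate case $p_1=p_2=1$, where the Hardy constant degenerates but the statement reduces to classical Young's inequality.
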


This theorem, in particular, gives some estimates for convolution with
Gaussian functions.

\begin{proposition}
	\label{prop:convgauss}
	Let $f\in L^{p,\infty}(\RN)$ and $1\leq p \leq q \leq \infty$. If we consider the convolution
	\begin{equation}
		h(x,t)=\int_\RN \frac{e^{-\frac{\abs{x-y}^2}{4t}}}{(4\pi t)^{N/2}}f(y)dy
	\end{equation}
we have that there exists a constant $C(p,q)$ such that
\begin{equation}
	\label{eqn:convgausseq1}
	\norm{h(\cdot,t)}_{L^q(\RN)}\leq \frac{C(p,q)
          \norm{f}_{L^{p,\infty}(\RN)}}{t^{\frac{N}{2}(\frac{1}{p}-\frac{1}{q})}},
        \quad t>0 . 
\end{equation}
Note that, due to Proposition \ref{prop:lorentzprop}, the same result is obtained if $f\in L^p(\RN)$.
\end{proposition}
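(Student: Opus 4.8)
The plan is to recognize $h(\cdot,t)=G_t\ast f$, where $G_t(x)=(4\pi t)^{-N/2}e^{-\abs{x}^2/4t}$ is the heat Gaussian, and to read off the exact power of $t$ in \eqref{eqn:convgausseq1} from the dilation behaviour of $G_t$ in Lorentz norms, thereby reducing the estimate to Young's convolution inequality of Theorem \ref{thm:conv}.

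First I would record the scaling of the Lorentz norms of $G_t$. Since $G_t(x)=t^{-N/2}G_1(x/\sqrt t)$, substituting $y=\sqrt t\,z$ in the distribution function gives, for every $1\leq r<\infty$ and every admissible second index $s$,
\[
\norm{G_t}_{L^{r,s}(\RN)}=t^{-\frac{N}{2}(1-\frac{1}{r})}\,\norm{G_1}_{L^{r,s}(\RN)},
\]
and $\norm{G_1}_{L^{r,s}(\RN)}<\infty$ because $G_1$ is a Schwartz function. This is the source of the exponent $t^{-\frac{N}{2}(\frac1p-\frac1q)}$, once $r$ is chosen so that $1-\frac1r=\frac1p-\frac1q$.

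For the range $1\le q<\infty$ I set $\frac{1}{r}\defeq 1+\frac1q-\frac1p$; since $1\le p\le q<\infty$ this gives $\frac1r\in(0,1]$, hence $r\in[1,\infty)$ is admissible and $1-\frac1r=\frac1p-\frac1q$. I then apply Theorem \ref{thm:conv} with $(p_1,q_1)=(p,\infty)$ for $f$ and $(p_2,q_2)=(r,1)$ for $G_t$. Its hypothesis holds strictly, since $\frac1p+\frac1r=1+\frac1q>1$; relation \eqref{eqn:thmconveq1} forces $p_3=q$; and the choice $q_3=q$ is compatible with \eqref{eqn:thmconveq2}, as $\frac1q\le 0+1$. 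Therefore $h(\cdot,t)\in L^{q,q}(\RN)=L^q(\RN)$ by Proposition \ref{prop:lorentzprop}, and
\[
\norm{h(\cdot,t)}_{L^q(\RN)}\le 3q\,\norm{f}_{L^{p,\infty}(\RN)}\,\norm{G_t}_{L^{r,1}(\RN)}=3q\,\norm{G_1}_{L^{r,1}(\RN)}\,t^{-\frac{N}{2}(\frac1p-\frac1q)}\,\norm{f}_{L^{p,\infty}(\RN)},
\]
which is \eqref{eqn:convgausseq1} with $C(p,q)=3q\,\norm{G_1}_{L^{r,1}(\RN)}$.

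The remaining, genuinely delicate case is the endpoint $q=\infty$, where Theorem \ref{thm:conv} is unavailable: forcing $p_3=\infty$ would require $\frac{1}{p_1}+\frac{1}{p_2}=1$, violating the strict inequality the theorem demands. For $1<p<\infty$ I would instead argue by duality via Hölder's inequality for Lorentz spaces, obtaining the pointwise bound
\[
\abs{h(x,t)}\le\int_\RN G_t(x-y)\abs{f(y)}\,dy\le C\,\norm{G_t}_{L^{p',1}(\RN)}\,\norm{f}_{L^{p,\infty}(\RN)},\qquad \tfrac1p+\tfrac1{p'}=1,
\]
and since $1-\frac1{p'}=\frac1p$, the scaling above yields exactly $t^{-\frac{N}{2}\cdot\frac1p}$, as needed. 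The corner $p=1$, $q=\infty$ is handled directly from $\norm{h(\cdot,t)}_{L^\infty(\RN)}\le\norm{G_t}_{L^\infty(\RN)}\norm{f}_{L^1(\RN)}=Ct^{-N/2}\norm{f}_{L^1(\RN)}$, matching the closing remark for genuine $L^p$ data. I expect this endpoint to be the main obstacle, since it requires stepping outside the Young framework and verifying finiteness of the appropriate Lorentz norm of the Gaussian.
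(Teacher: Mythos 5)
Your treatment of the main range $1\le q<\infty$ is correct and essentially identical to the paper's: both apply Theorem \ref{thm:conv} with $(p_1,q_1)=(p,\infty)$ for $f$ and the Gaussian placed in $L^{r,\cdot}$ with $\frac1r=1+\frac1q-\frac1p$, the only (immaterial) difference being your choice of second index $q_2=1$ versus the paper's $q_2=r$, and both extract the time decay from the dilation identity $\norm{G_t}_{L^{r,s}}=t^{-\frac{N}{2}(1-\frac1r)}\norm{G_1}_{L^{r,s}}$. Where you genuinely diverge is the endpoint $q=\infty$, $1<p<\infty$: you invoke the H\"older/duality pairing $L^{p,\infty}\times L^{p',1}\to L^1$ to bound $h(x,t)$ pointwise, which is a standard O'Neil-type fact but one the paper never states (its appendix only records Theorem \ref{thm:conv} and Proposition \ref{prop:lorentzprop}), so strictly speaking you are importing an external lemma. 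The paper instead bootstraps: it writes $h(t)=G_{t/2}\ast h(t/2)$, applies the already-proved case $q=p$ to get $\norm{h(t/2)}_{L^p}\le C\norm{f}_{L^{p,\infty}}$, and finishes with classical Young's inequality $L^{p'}\ast L^p\to L^\infty$; this stays entirely within the stated toolkit and exploits the semigroup property of the Gaussian, at the cost of being slightly less direct. Your argument buys a one-line pointwise bound; the paper's buys self-containedness. One small caveat: at the corner $p=1$, $q=\infty$ you prove the estimate only for genuine $L^1$ data rather than for $f\in L^{1,\infty}$ as the statement literally requires (indeed the convolution of $G_t$ with $|y|^{-N}\in L^{1,\infty}$ is not even finite, so the literal statement is delicate there); the paper's bootstrap does not really fare better at that corner, so this is a defect of the proposition's phrasing rather than of your proof.
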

\begin{proof}
	Denote $G(x,t)=(4\pi t)^{-N/2}e^{-\abs{x}^2/4t}$. Then
        $\norm{G(t)}_{L^s(\RN)}\leq C(p)t^{-N/2(1-1/s)}$. Now, if
        $q\neq \infty$, we use Theorem \ref{thm:conv} with
        $g=G(\cdot,t)$, $(p_3,q_3)=(q,q)$, $(p_1,q_1)=(p,\infty)$ and
        $(p_2,q_2)=(r,r)$ with $1/r=1/q-1/p+1\geq 1/q$ (so
        \eqref{eqn:thmconveq1} and \eqref{eqn:thmconveq2} are
        satisfied) and obtain \eqref{eqn:convgausseq1}.

	If $q=\infty$ and $p\neq \infty$, we use
        \eqref{eqn:convgausseq1} to obtain
        $\norm{h(t/2)}_{L^p(\RN)}\leq
        C(p)\norm{f}_{L^{p,\infty}(\RN)}$. Then, we use
        $h(t)=G(t/2)\ast h(t/2)$ and standard Young's convolution
        inequality to obtain $$\norm{h(t)}_{L^\infty(\RN)}\leq
        \norm{G(t/2)}_{L^{p'}(\RN)}\norm{h(t/2)}_{L^p(\RN)}\leq
        \frac{C(p)\norm{f}_{L^{p,\infty}(\RN)}}{t^{\frac{N}{2p}}}.$$ 
	If $p=q=\infty$, it is straightforward also by standard Young's convolution inequality.
\end{proof}

    \end{appendices}

\newcommand{\etalchar}[1]{$^{#1}$}


\begin{thebibliography}{CEQW16}

\bibitem[BQV07]{quiros2007}
C. Br{\"a}ndle, F. Quir{\'o}s, and J.~L. Vazquez.
\newblock Asymptotic behaviour of the porous media equation in domains with
  holes.
\newblock {\em Interfaces and Free Boundaries}, 9, January 2007.

\bibitem[CDW03a]{cazenave2003universalb}
T. Cazenave, F. Dickstein, and F.~B. Weissler.
\newblock Universal solutions of a nonlinear heat equation on $\mathbb{R}^n$.
\newblock {\em Annali Della Scuola Normale Superiore Di Pisa-Classe Di
  Scienze}, 2(1):77--117, 2003.

\bibitem[CDW03b]{cazenave2003universal}
T. Cazenave, F. Dickstein, and F.~B. Weissler.
\newblock Universal solutions of the heat equation on $\mathbb{R}^n$.
\newblock {\em Discrete and Continuous Dynamical Systems}, 9(5):1105--1132,
  2003.

\bibitem[CEQW12]{cortazar2012asymptotic}
C. Cort{\'a}zar, M. Elgueta, F. Quir{\'o}s, and N. Wolanski.
\newblock Asymptotic behavior for a nonlocal diffusion equation in domains with
  holes.
\newblock {\em Archive for Rational Mechanics and Analysis}, 205:673--697,
  2012.

\bibitem[CEQW16]{CORTAZAR2016586}
C. Cortázar, M. Elgueta, F. Quirós, and N. Wolanski.
\newblock Asymptotic behavior for a nonlocal diffusion equation in exterior
  domains: The critical two-dimensional case.
\newblock {\em Journal of Mathematical Analysis and Applications},
  436(1):586--610, 2016.

\bibitem[CGQ24]{quiroscanizo24}
J. Cañizo, A. Gárriz, and F. Quirós.
\newblock Asymptotic estimates for the dirichlet heat equation in exterior
  domains.
\newblock {\em In preparation}, 2024.

\bibitem[CQW18]{quirospm2d}
C. Cort\'{a}zar, F. Quir\'{o}s, and N. Wolanski.
\newblock Near-field asymptotics for the porous medium equation in exterior
  domains. the critical two-dimensional case.
\newblock {\em SIAM Journal on Mathematical Analysis}, 50(3):2664--2680, 2018.

\bibitem[DDH{\etalchar{+}}04]{denk2004new}
R. Denk, G. Dore, M. Hieber, J. Pr{\"u}ss, and A. Venni.
\newblock New thoughts on old results of {{RT Seeley}}.
\newblock {\em Mathematische Annalen}, 328(4):545--583, 2004.

\bibitem[DR24a]{DdTRB24a}
J. {Dom{\'i}nguez-de-Tena} and A. {Rodr{\'i}guez-Bernal}.
\newblock Asymptotic behaviour of the heat equation in an exterior domain with
  general boundary conditions {I}. the case of integrable data.
\newblock {\em Submitted}, page~30, 2024.

\bibitem[DR24b]{DdTRB23}
J. {Dom{\'i}nguez-de-Tena} and A. {Rodr{\'i}guez-Bernal}.
\newblock On the loss of mass for the heat equation in an exterior domain with
  general boundary conditions.
\newblock {\em São Paulo Journal of Mathematical Sciences}, page~20, 2024.

\bibitem[DZ92]{duoandikoetxea}
J. Duoandikoetxea and E. Zuazua.
\newblock Moments, masses de {{Dirac}} et d{\'e}composition de fonctions.
\newblock {\em Comptes rendus de l'Acad{\'e}mie des sciences. S{\'e}rie 1,
  Math{\'e}matique}, 315(6):693--698, 1992.

\bibitem[Fri08]{friedman2008partial}
A. Friedman.
\newblock {\em Partial Differential Equations of Parabolic Type}.
\newblock Courier Dover Publications, 2008.

\bibitem[GGS10]{giga}
M.~H. Giga, Y. Giga, and J. Saal.
\newblock {\em Nonlinear Partial Differential Equations. {{Asymptotic}}
  Behavior of Solutions and Self-Similar Solutions}.
\newblock January 2010.

\bibitem[Gra14]{grafakos}
L. Grafakos.
\newblock {\em Classical {{Fourier Analysis}}}, volume 249 of {\em Graduate
  {{Texts}} in {{Mathematics}}}.
\newblock Springer New York, New York, NY, 2014.

\bibitem[GS11]{gyrya2011neumann}
P. Gyrya and L. {Saloff-Coste}.
\newblock {\em Neumann and {{Dirichlet}} Heat Kernels in Inner Uniform
  Domains}.
\newblock Number 336 in Ast{\'e}risque. Soci{\'e}t{\'e} math{\'e}matique de
  France, 2011.

\bibitem[Gyr07]{gyryathesis}
P. Gyrya.
\newblock {\em Heat Kernel Estimates for Inner Uniform Subsets of
  {{Harnack-type Dirichlet}} Space}.
\newblock PhD thesis, Cornell University, 2007.

\bibitem[Hen81]{henry}
D. Henry.
\newblock {\em Geometric {{Theory}} of {{Semilinear Parabolic Equations}}},
  volume 840 of {\em Lecture {{Notes}} in {{Mathematics}}}.
\newblock Springer Berlin Heidelberg, Berlin, Heidelberg, 1981.

\bibitem[Her98]{herraiz1998}
L.~A. Herraiz.
\newblock A {{Nonlinear Parabolic Problem}} in an {{Exterior Domain}}.
\newblock {\em Journal of Differential Equations}, 142(2):371--412, January
  1998.

\bibitem[Lun95]{lunardi}
A. Lunardi.
\newblock {\em Analytic Semigroups and Optimal Regularity in Parabolic
  Problems}.
\newblock Modern {{Birkh{\"a}user}} Classics. Birkh{\"a}user, Basel Heidelberg,
  repr. of the 1995 ed edition, 1995.

\bibitem[Mor83]{mora1983semilinear}
X. Mora.
\newblock Semilinear {{Parabolic Problems Define Semiflows}} on {{Ck Spaces}}.
\newblock {\em Transactions of the American Mathematical Society},
  278(1):21--55, 1983.

\bibitem[O'N63]{convolution}
R. O'Neil.
\newblock Convolution operators and {{L}}(p,q) spaces.
\newblock {\em Duke Mathematical Journal}, 30(1), March 1963.

\bibitem[Paz10]{pazy}
A. Pazy.
\newblock {\em Semigroups of Linear Operators and Applications to Partial
  Differential Equations}.
\newblock Number~44 in Applied Mathematical Sciences. Springer, New York, NY,
  3.[print] edition, 2010.

\bibitem[QV99]{quiros1999asymptotic}
F. Quir{\'o}s and J.~L. Vazquez.
\newblock Asymptotic behaviour of the porous media equation in an exterior
  domain.
\newblock {\em Annali della Scuola Normale Superiore di Pisa-Classe di
  Scienze}, 28(2):183--227, 1999.

\bibitem[RRB18]{robinson2018optimal}
J.~C. Robinson and A. Rodriguez-Bernal.
\newblock Optimal existence classes and nonlinear-like dynamics in the linear
  heat equation in {$R^d$}.
\newblock {\em Advances in Mathematics}, 334:488--543, 2018.

\bibitem[RRB23]{robinson2023estimates}
J.~C. Robinson and A. Rodr{\'\i}guez-Bernal.
\newblock Estimates for the heat flow in optimal spaces of unbounded initial
  data in ${R}^d$ and applications to the ornstein--uhlenbeck semigroup.
\newblock {\em Mediterranean Journal of Mathematics}, 20(2):70, 2023.

\bibitem[Sou99]{souplet1999geometry}
P. Souplet.
\newblock Geometry of unbounded domains, {{Poincar{\'e}}} inequalities and
  stability in semilinear parabolic equations.
\newblock {\em Communications in partial differential equations},
  24(5-6):545--551, 1999.

\bibitem[V{\'a}z17]{vazquez2017asymptotic}
J.~L. V{\'a}zquez.
\newblock Asymptotic behaviour methods for the {{Heat Equation}}.
  {{Convergence}} to the {{Gaussian}}.
\newblock {\em arXiv preprint arXiv:1706.10034}, 2017.

\bibitem[VZ02]{vazquez2002complexity}
J.~L. V{\'a}zquez and E. Zuazua.
\newblock Complexity of large time behaviour of evolution equations with
  bounded data.
\newblock {\em Chinese Annals of Mathematics}, 23(02):293--310, 2002.

\bibitem[WY12]{wang2012complicated}
L. Wang and J. Yin.
\newblock Complicated asymptotic behavior of solutions for heat equation in
  some weighted space.
\newblock In {\em Abstract and Applied Analysis}, volume 2012, page 463082.
  Wiley Online Library, 2012.

\bibitem[WYZ18]{wang2018complicated}
L. Wang, J. Yin, and Y. Zhou.
\newblock Complicated asymptotic behavior of solutions for porous medium
  equation in unbounded space.
\newblock {\em Journal of Differential Equations}, 264(10):6302--6324, 2018.

\end{thebibliography}
\end{document}